\newif\ifslide
\theoremstyle{plain}
\newtheorem{theorem}{Theorem}
\newtheorem{theorem}{Theorem}[section]
\newtheorem{lemma}[theorem]{Lemma}
\newtheorem*{theorem*}{Theorem}
\newtheorem{proposition}[theorem]{Proposition}
\newtheorem{definition-lemma}[theorem]{Definition-Lemma}
\newtheorem{red-question}[theorem]{\textcolor{red}{Question}}
\theoremstyle{definition}
\newtheorem{definition}[theorem]{Definition}
\newtheorem{remark}[theorem]{Remark}
\def\ideal#1.{I_{#1}}
\def\ring#1.{\mathcal {O}_{#1}}
\def\Spec{\operatorname {Spec}}
\def\fring#1.{\hat{\mathcal {O}}_{#1}}
\def\proj#1.{\mathbb {P}(#1)}
\def\pr #1.{\mathbb {P}^{#1}}
\def\dpr #1.{\hat{\mathbb {P}}^{#1}}
\def\af #1.{\mathbb A^{#1}}
\def\Hz #1.{\mathbb F_{#1}}
\def\Hbz #1.{\overline{\mathbb F}_{#1}}
\def\fb#1.{\underset #1 {\times}}
\def\rest#1.{\underset {\ \ring #1.} \to \otimes}
\def\au#1.{\operatorname {Aut}\,(#1)}
\def\deg#1.{\operatorname {deg } (#1)}
\def\pic#1.{\operatorname {Pic}\,(#1)}
\def\pico#1.{\operatorname{Pic}^0(#1)}
\def\picg#1.{\operatorname {Pic}^G(#1)}
\def\ner#1.{NS (#1)}
\def\rdown#1.{\llcorner#1\lrcorner}
\def\rfdown#1.{\lfloor{#1}\rfloor}
\def\rup#1.{\ulcorner{#1}\urcorner}
\def\rcup#1.{\lceil{#1}\rceil}
\def\n1#1.{\operatorname {N_1}(#1)}  
\def\cn1#1.{\overline{\operatorname {N^1}(#1)}} 
\def\cone#1.{\operatorname {NE}(#1)}     
\def\ccone#1.{\overline{\operatorname {NE}}(#1)}
\def\none#1.{\operatorname {NF}(#1)}
\def\cnone#1.{\overline{\operatorname {NF}}(#1)}
\def\mone#1.{\operatorname {NM}(#1)} 
\def\cmone#1.{\overline{\operatorname {NM}}(#1)}
\def\coef#1.{\frac{(#1-1)}{#1}}
\def\vit#1.{D_{\langle #1 \rangle}}
\def\mm#1.{\overline {M}_{0,#1}}
\def\H1#1.{H^1(#1,{\ring #1.})}
\def\ac#1.{\overline {\mathbb F}_{#1}}
\def\adj#1.{\frac {#1-1}{#1}}
\def\spn#1.{\overline{#1}}
\def\pek#1.#2.{\Cal P^{#1}(#2)}
\def\plk#1.#2.{\Cal P^{\leq #1}(#2)}
\def\ev#1.{\operatorname{ev_{#1}}}
\def\ilist#1.{{#1}_1,{#1}_2,\dots}
\def\bminv#1.{(\nu_1,s_1;\nu_2,s_2;\dots ;\nu_{#1},s_{#1};\nu_{r+1})}
\def\zinv#1.{(\nu_1,s_1;\nu_2,s_2;\dots ;\nu_{#1},s_{#1};0)}
\def\iinv#1.{(\nu_1,s_1;\nu_2,s_2;\dots ;\nu_{#1},s_{#1};\infty)}
\def\scr #1.{\mathcal #1}
\def\llist#1.#2.{{#1}_1,{#1}_2,\dots,{#1}_{#2}}
\def\ulist#1.#2.{{#1}^1,{#1}^2,\dots,{#1}^{#2}}
\def\lomitlist#1.#2.{{#1}_1,{#1}_2,\dots,\hat {{#1}_i}, \dots, {#1}_{#2}}
\def\lomitlistz#1.#2.{{#1}_0,{#1}_1,\dots,\hat {{#1}_i}, \dots, {#1}_{#2}}
\def\loc#1.#2.{\Cal O_{#1,#2}}
\def\fderiv#1.#2.{\frac {\partial #1}{\partial #2}}
\def\deriv#1.#2.{\frac {d #1}{d #2}}
\def\map#1.#2.{#1 \longrightarrow #2}
\def\rmap#1.#2.{#1 \dasharrow #2}
\def\emb#1.#2.{#1 \hookrightarrow #2}
\def\non#1.#2.{\text {Spec }#1[\epsilon]/(\epsilon)^{#2}}
\def\Hi#1.#2.{\text {Hilb}^{#1}(#2)}
\def\sym#1.#2.{\operatorname {Sym}^{#1}(#2)}
\def\Hb#1.#2.{\text {Hilb}_{#1}(#2)}
\def\Hm#1.#2.{\Hom_{#1}(#2)}
\def\prd#1.#2.{{#1}_1\cdot {#1}_2\cdots {#1}_{#2}}
\def\Bl #1.#2.{\operatorname {Bl}_{#1}#2}
\def\pl #1.#2.{#1^{\otimes #2}}
\def\mgn#1.#2.{\overline {M}_{#1,#2}}
\def\ialist#1.#2.{{#1}_1 #2 {#1}_2, #2\dots}
\def\pair#1.#2.{\langle #1, #2\rangle}
\def\vandermonde#1.#2.{\left|
\begin{matrix}
1 & 1 & 1 & \dots & 1\\
{#1}_1 & {#1}_2 & {#1}_3 & \dots & {#1}_{#2}\\
{#1}_1^2 & {#1}_2^2 & {#1}_3^2 & \dots & {#1}_{#2}^2\\
\vdots & \vdots & \vdots & \ddots & \vdots\\
{#1}_1^{#2-1} & {#1}_2^{#2-1} & {#1}_2^{#2-1} & \dots & {#1}_{#2}^{#2-1}\\
\end{matrix}
\right|
}
\def\vandermondet#1.#2.{\left|
\begin{matrix}
1 & {#1}_1   & {#1}_1^2 & \dots & {#1}_1^{#2-1}\\
1 & {#1}_2   & {#1}_2^2 & \dots & {#1}_2^{#2-1}\\
1 & {#1}_3   & {#1}_3^2 & \dots & {#1}_3^{#2-1}\\
\vdots & \vdots & \vdots & \ddots & \vdots\\
1 & {#1}_{#2}& {#1}_{#2}^2 & \dots & {#1}_{#2}^{#2-1}\\
\end{matrix}
\right|
}
\def\gr#1.#2.{\mathbb{G}(#1,#2)}
\def\alist#1.#2.#3.{{#1}_1 #2 {#1}_2 #2\dots #2 {#1}_{#3}}
\def\zlist#1.#2.#3.{#1_0 #2 #1_1 #2\dots #2 #1_{#3}}
\def\lomitlist30#1.#2.#3.{{#1}_0,{#1}_1 #2 \dots #2\hat {{#1}_i} #2\dots #2 {#1}_{#3}}
\def\lmap#1.#2.#3.{#1 \overset{#2}{\longrightarrow} #3}
\def\mes#1.#2.#3.{#1 \longrightarrow #2 \longrightarrow #3}
\def\ses#1.#2.#3.{0\longrightarrow #1 \longrightarrow #2 \longrightarrow #3 \longrightarrow 0}
\def\les#1.#2.#3.{0\longrightarrow #1 \longrightarrow #2 \longrightarrow #3}
\def\res#1.#2.#3.{#1 \longrightarrow #2 \longrightarrow #3\longrightarrow 0}
\def\Hi#1.#2.#3.{\text {Hilb}^{#1}_{#2}(#3)}
\def\ten#1.#2.#3.{#1\underset {#2}{\otimes} #3}
\def\lomitlist30#1.#2.#3.{{#1}_0 #2 {#1}_1 #2 \dots #2 \hat {{#1}_i} #2 \dots #2 {#1}_{#3}}
\def\mderiv#1.#2.#3.{\frac {d^{#3} #1}{d #2^{#3}}}
\def\Diff{\operatorname{Diff}}
\def\Hom{\operatorname{Hom}}
\def\Supp{\operatorname{Supp}}
\def\Exc{\operatorname{Exc}}
\def\dim{\operatorname{dim}}
\def\deg{\operatorname{deg}}
\def\det{\operatorname{det}}
\def\Sing{\operatorname{Sing}}
\def\Hilb{\operatorname{Hilb}}
\def\Div{\operatorname{Div}}
\def\rest{\operatorname{res}}
\def\e{\Cal E}
\def\e1{E_1}
\def\e2{E_2}
\def\Q{\mathbb Q}
\def\Z{\mathbb Z}
\def\mapdown#1{\big\downarrow\rlap{$\vcenter{\hbox{$\scriptstyle#1$}}$}}
\def\mapse#1{
{\vcenter{\hbox{$\mathop{\smash{\raise1pt\hbox{$\diagdown$}\!\lower7pt
\hbox{$\searrow$}}\vphantom{p}}\limits_{#1}\vphantom{\mapdown{}}$}}}}
\def\VR#1.{height#1pt&\omit&&\omit&&\omit&&\omit&&\omit&\cr}
\def\VRT#1.{height#1pt&\omit&&\omit&\cr}
\title[On log del Pezzo surfaces]
{On  log del Pezzo surfaces in large characteristic} 
\author{Paolo Cascini, Hiromu Tanaka, Jakub Witaszek} 
\subjclass[2010]{14E30, 14F17, 13A35.}
\keywords{Log del Pezzo  surface, $F$-singularities, Kawamata-Viehweg vanishing, positive characteristic}
\thanks{All three authors were funded by EPSRC}
\address{Department of Mathematics, Imperial College, London, 180 Queen's Gate, 
London SW7 2AZ, UK} 
\email{p.cascini@imperial.ac.uk}
\email{h.tanaka@imperial.ac.uk}
\email{j.witaszek14@imperial.ac.uk}
\newtheorem{step}{Step}
\begin{document}

\maketitle

\begin{abstract}
We show that any Kawamata log terminal del Pezzo surface  over an algebraically closed field of large characteristic is 
globally $F$-regular or it admits a log resolution which lifts to characteristic zero. 
As a consequence, we prove the Kawamata--Viehweg vanishing theorem for klt del Pezzo surfaces of large characteristic.
\end{abstract}

\tableofcontents

\setcounter{section}{0}

\section{Introduction}

Recently, starting from the work of Hacon and Xu \cite{hx13}, many of the classical results of the
 minimal model programme in characteristic zero have been extended to  three dimensional varieties over an algebraically closed field $k$ of  characteristic $p>5$  \cite{birkar13,ctx13,bw14}.
One of the main tools used in \cite{hx13} is the theory of $F$-singularities, replacing the use of the classical vanishing theorems, which hold only in characteristic zero (e.g.\ see \cite{schwede09}). 
Their proof of the existence of flips relies on 
the fact that if $\mathrm{char}\,k>5$ and $(X, \Delta)$ is a one-dimensional projective klt pair such that 
$-(K_X+\Delta)$ is ample and the coefficients of $\Delta$ are contained in the standard set $\{1-\frac{1}{n}\,|\, n \in \mathbb Z_{>0}\}$, 
then $(X, \Delta)$ is globally $F$-regular \cite[Theorem 4.2]{watanabe91}.
Thus, it is natural to ask whether this result can be generalised to higher dimensional varieties.
 Unfortunately, in \cite{CTW15a} we give a negative answer to this question. 
Indeed, we show that  
over an arbitrary algebraically closed field $k$ of characteristic $p>0$, 
there exists a projective klt surface $X$ over $k$ such that $-K_X$ is ample, but $X$ is not globally $F$-regular. 
Thus, even in large characteristic, it is not known a priori whether klt del Pezzo surfaces admit
 desirable properties, 
such as  the Kawamata--Viehweg vanishing theorem and the Bogomolov bound, which in particular gives a bound on the number of singular points on a klt del Pezzo surface of Picard number one (\cite{langer15} and \cite[Section 9]{km99}). 

\medskip

The goal of this paper is to show the following:

\begin{theorem}\label{thm-main}
Let $I\subseteq (0,1)\cap \mathbb Q$ be a finite set. 

Then there exists a positive integer $p(I)$ which satisfies the following property: 

Let $(X, B)$ be a two dimensional projective klt pair over an algebraically closed field of characteristic $p>p(I)$ 
such that $-(K_X+B)$ is ample and the coefficients of $B$ are contained in $I$. 

Then one of the following properties holds:
\begin{enumerate}
\item{$(X, B)$ is globally $F$-regular, or}
\item{there exists a log resolution $\mu \colon V \to X$ of $(X, B)$ 
such that $(V, \Exc(\mu) \cup \mu^{-1}_*(\Supp B))$ lifts to characteristic zero over a smooth base 
(cf.\ Definition~\ref{d-liftable}). }
\end{enumerate}
\end{theorem}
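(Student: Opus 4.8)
The plan is to run a case analysis based on the structure of the minimal model program applied to $(X,B)$, reducing to understanding the two "extremal" types of klt del Pezzo surfaces: those of Picard number one, and those admitting a Mori fibre space structure over a curve. First I would take a minimal resolution or, better, run an MMP to arrive at either a surface of Picard rank one with $-(K_X+B)$ ample, or a $\mathbb{P}^1$-fibration $X \to \mathbb{P}^1$. The key dichotomy is then driven by the singularities: if the singularities of $(X,B)$ are "mild" — quantified, say, by a lower bound on the log discrepancies or by boundedness of the dual graphs of the resolution — then the log resolution $\mu\colon V \to X$ one obtains is one of only finitely many configurations (as $I$ is finite and the self-intersections of exceptional curves are bounded), and each such configuration, being a fixed combinatorial object, lifts to characteristic zero over a smooth base by an explicit deformation-theoretic argument (vanishing of the relevant obstruction group $H^2$ for rational surfaces). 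Conversely, if the singularities are "deep", one shows directly that $(X,B)$ is globally $F$-regular.

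**The boundedness input.** The crucial point is that klt del Pezzo surfaces with coefficients in a fixed finite set $I$ form a bounded family once we fix the characteristic — but we need uniformity in $p$. I would invoke the Alexeev–Mori-type boundedness for $\epsilon$-klt del Pezzo surfaces: there is an $\epsilon = \epsilon(I) > 0$ such that any $(X,B)$ as in the theorem is $\epsilon$-klt, hence the underlying surfaces $X$ belong to a bounded family, and the minimal resolutions $\tilde X \to X$ have bounded Picard number and bounded $K_{\tilde X}^2$. This gives a uniform bound $N = N(I)$ on the number of exceptional curves in a log resolution and on (the absolute values of) their self-intersection numbers. Thus, up to deformation, there are only finitely many possible triples $(V, \Exc(\mu)\cup\mu_*^{-1}\Supp B)$ arising as log resolutions — this is what makes statement (2) a finite check independent of $p$.

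**The $F$-regularity side and the split.** For the other horn of the alternative I would use the established comparison between log canonical thresholds / $F$-pure thresholds and the fact (as in Schwede–Tucker and the Hara–Watanabe circle of ideas) that global $F$-regularity is implied by a sufficiently large $F$-pure threshold together with $-(K_X+B)$ ample, once $p \gg 0$. Concretely, if the singularities of $(X,B)$ do not lie in the bounded "log resolution" regime, then some exceptional valuation has very negative self-intersection, forcing the local singularity to be of a type (a quotient singularity with large group order, roughly) which is $F$-regular for all but finitely many $p$; combined with the global positivity $-(K_X+B)$ ample and a standard Frobenius-splitting argument on the surface, this upgrades to global $F$-regularity of the pair. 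The precise split of the two cases should be made along a threshold on $\min$ of the self-intersections of exceptional curves, or equivalently on the log discrepancies.

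**Main obstacle.** The hard part will be making the dichotomy quantitatively uniform in $p$: I must ensure that the threshold separating "liftable log resolution" from "globally $F$-regular" can be chosen depending only on $I$, not on $p$, which requires the boundedness results for $\epsilon$-klt del Pezzo surfaces to hold with constants independent of the characteristic (this is delicate, since many boundedness arguments in positive characteristic implicitly use resolution of singularities or vanishing theorems). I expect the bulk of the real work to be: (a) establishing this characteristic-free boundedness — likely via the explicit classification of klt surface singularities and careful bookkeeping on dual graphs rather than via a general boundedness theorem — and (b) verifying that each of the finitely many combinatorial log-resolution configurations genuinely lifts to characteristic zero over a smooth base, which amounts to checking that the obstruction space vanishes for these rational surface configurations, using that $H^2(V, T_V(-\log D)) = 0$ in the relevant cases.
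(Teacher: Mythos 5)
Your proposed dichotomy rests on a false premise. You write that ``there is an $\epsilon = \epsilon(I) > 0$ such that any $(X,B)$ as in the theorem is $\epsilon$-klt.'' This is not true: klt del Pezzo pairs with boundary coefficients in a fixed finite set $I$ need \emph{not} be $\epsilon$-klt for any uniform $\epsilon$. (For instance, cyclic quotient singularities of arbitrarily high index can occur, driving the minimal log discrepancy toward $0$.) The $\epsilon$-klt boundedness result of Alexeev requires $\epsilon$ as an external input; it does not assert that a finite coefficient set forces $\epsilon$-kltness. As a result, your reduction of case (2) to a ``finite check independent of $p$'' does not get off the ground, because the family is not bounded a priori. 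Indeed the correct structure, and the one the paper follows, is the opposite: one \emph{chooses} $\epsilon$ and splits into the $\epsilon$-klt case (where boundedness applies, and one must still prove a mixed-characteristic version of it over $\Spec \mathbb Z$ via Hilbert schemes and noetherian induction rather than an obstruction-theoretic $H^2$-vanishing) and the non-$\epsilon$-klt case, which is the hard part and is not yet in a bounded family.

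Your claim on the $F$-regularity side is also not correct. You assert that a deeply singular point, being ``of a type which is $F$-regular for all but finitely many $p$,'' upgrades to global $F$-regularity of the pair once $-(K_X+B)$ is ample. Local $F$-regularity plus ampleness does not imply global $F$-regularity, and the paper explicitly cites a counterexample from \cite{CTW15a}: there exist klt del Pezzo surfaces over fields of \emph{arbitrary} positive characteristic that are not globally $F$-regular. Moreover, your heuristic has the dichotomy essentially backwards: in the paper's non-$\epsilon$-klt analysis, the globally $F$-regular outcome comes \emph{not} from deep local singularities, but from a global $F$-adjunction argument. One extracts a divisor $C$ of small log discrepancy over $X$, runs a $-(K_Y+C)$-MMP with scaling to reach a pair $(Z, C_Z + B_Z)$ with $-(K_Z+C_Z+B_Z)$ nef, and then restricts a Frobenius splitting from $Z$ to the curve $C_Z \simeq \mathbb P^1$ and lifts it back using the commutative diagrams of $F$-adjunction (Lemmas~\ref{l_gladj}, \ref{lem-MFS}, \ref{l_pic1-non-plt}). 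The one subcase where this fails --- $K_Z+C_Z+B_Z \equiv 0$ and $(Z, C_Z+B_Z)$ plt --- is precisely where the paper falls back on a separate boundedness/liftability statement (Proposition~\ref{p_log_resol_lift}) for such numerically trivial plt pairs, followed by a blow-up lemma (Lemma~\ref{l_blow_up_lift}) to push the liftable resolution down to $X$. None of these ideas (extraction of $C$, the $\kappa$-stratification, global $F$-adjunction, the numerically-trivial-plt boundedness, and the blow-up transfer) appear in your sketch, and the false $\epsilon$-klt reduction at the start means the rest cannot be salvaged as written.
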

Note that we do not know whether there exists a  klt  del Pezzo surface in large characteristic which violates (2) of Theorem \ref{thm-main}.

\medskip

Using the above result, the minimal model programme 
and the logarithmic version of a result of Deligne--Illusie \cite[Corollary 3.8]{hara98a}, 
we prove the Kawamata--Viehweg vanishing theorem for klt del Pezzo surfaces in large characteristic:

\begin{theorem}\label{t_kv}
There exists a positive integer $p_0$ which satisfies the following property: 

Let $(X, \Delta)$ be a two dimensional projective klt pair over an algebraically closed field of characteristic $p>p_0$. 
Suppose that there exists an effective $\Q$-divisor $B$ such that $(X, B)$ is klt and $-(K_X+B)$ is nef and big. 
If $D$ is a 
$\Z$-divisor  on $X$ such that $D-(K_X+\Delta)$ is nef and big, then 
\[
H^i(X, \mathcal O_X(D))=0
\qquad\text{for any $i>0$}. 
\]
\end{theorem}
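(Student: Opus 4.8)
The plan is to deduce Theorem~\ref{t_kv} from Theorem~\ref{thm-main} by splitting into the two cases provided by that dichotomy. First I would reduce to the case where $-(K_X+B)$ is actually ample: by replacing $B$ with a slightly larger $\Q$-divisor $B'$ with coefficients still in a fixed finite set and $(X,B')$ still klt, one can arrange $-(K_X+B')$ ample while keeping $D-(K_X+\Delta)$ nef and big; this uses that $-(K_X+B)$ nef and big, together with a small perturbation, pushes us into the ample cone (or one runs a $(K_X+B)$-MMP to reduce to the del Pezzo situation, noting that the vanishing is a birational statement in the sense that we can pull back along the resolution). In any case, the point is to put ourselves in the hypotheses of Theorem~\ref{thm-main} with $I$ the finite set of coefficients appearing, and to choose $p_0 = p(I)$ (enlarged as needed below).

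In the globally $F$-regular case (alternative (1) of Theorem~\ref{thm-main}), the vanishing $H^i(X,\mathcal O_X(D))=0$ for $i>0$ and $D-(K_X+\Delta)$ nef and big is a standard consequence: globally $F$-regular pairs satisfy a Kawamata--Viehweg-type vanishing theorem (this is essentially Schwede--Smith, or the original observation of Smith for globally $F$-regular varieties), using the splitting of Frobenius twisted by an appropriate divisor to show that the relevant cohomology groups inject into their Frobenius pullbacks and are therefore killed by Serre vanishing on the Frobenius tower. So in this case there is essentially nothing to do beyond citing the appropriate statement.

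In the liftable case (alternative (2)), I would argue as follows. Let $\mu\colon V\to X$ be the log resolution such that $(V, E)$ with $E = \Exc(\mu)\cup\mu^{-1}_*(\Supp B)$ lifts to characteristic zero over a smooth base. Write $K_V + B_V = \mu^*(K_X+B) + \sum a_i E_i$ with $a_i > -1$ (klt), and similarly track $\Delta$. The logarithmic Deligne--Illusie result \cite[Corollary 3.8]{hara98a} gives, for $p$ large (here is where $p_0$ may need to be enlarged, but by a bound depending only on the liftable data, which is controlled by $I$ via Theorem~\ref{thm-main}), the Kawamata--Viehweg vanishing theorem on $V$ with respect to the snc divisor $E$: namely $H^i(V, \mathcal O_V(K_V + \lceil N \rceil)) = 0$ for $i>0$ when $N$ is a nef and big $\Q$-divisor whose fractional part is supported on $E$. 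I would apply this with $N$ chosen so that $\mu^*D \sim_{\Q} K_V + N$ modulo the discrepancy divisors, i.e.\ set $N = \mu^*(D - (K_X+\Delta)) + \{-\sum a'_i E_i\}$ after absorbing the integral parts, using that $D - (K_X+\Delta)$ is nef and big so its pullback is nef and big, and that the discrepancies make the fractional part supported on the snc divisor $E$. This yields $H^i(V, \mu^*\mathcal O_X(D) \otimes \mathcal O_V(\text{effective exceptional})) = 0$ for $i>0$, and then I would descend to $X$ via $R\mu_*$: since $R^j\mu_* \mathcal O_V(\text{that divisor}) = 0$ for $j>0$ (again a local vanishing consequence of liftability, or just of the fact that $\mu$ is a resolution of klt singularities — one can use a relative version of Deligne--Illusie or relative Kawamata--Viehweg) and $\mu_*$ of it is $\mathcal O_X(D)$, the Leray spectral sequence gives $H^i(X,\mathcal O_X(D)) = 0$.

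The main obstacle I expect is the bookkeeping in the liftable case: carefully choosing the integral divisor on $V$ so that (a) its round-up matches $\mu^*D$ up to an effective $\mu$-exceptional divisor, (b) the fractional part lies in the snc support $E$ so logarithmic Deligne--Illusie applies, and (c) the local vanishing $R^j\mu_* = 0$ for $j>0$ holds — and then checking that the prime $p_0$ needed for \cite[Corollary 3.8]{hara98a} to apply on $V$ can be bounded purely in terms of $I$, i.e.\ that the liftable models produced by Theorem~\ref{thm-main} come from a bounded family (which should follow from boundedness of klt del Pezzo surfaces with coefficients in a fixed finite set, or is built into the proof of Theorem~\ref{thm-main}). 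Compared to that, the globally $F$-regular case and the reduction to the del Pezzo/ample situation are routine.
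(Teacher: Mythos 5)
Your proposal misses the central difficulty that the paper's argument is designed to overcome: the uniformity of $p_0$. Theorem~\ref{t_kv} asserts a \emph{single} bound $p_0$ that works for all pairs $(X,\Delta)$ and all auxiliary divisors $B$, with no constraint whatsoever on the coefficients of $\Delta$ or $B$. The paper draws attention to exactly this point right after the statement: ``Theorem~\ref{thm-main} imposes a condition on the coefficients of the boundary divisor $B$, but Theorem~\ref{t_kv} does not need such an assumption.'' Your plan is to ``put ourselves in the hypotheses of Theorem~\ref{thm-main} with $I$ the finite set of coefficients appearing, and to choose $p_0 = p(I)$,'' and later you say you would check ``that the prime $p_0$ \dots can be bounded purely in terms of $I$.'' But $I$ depends on the input $B$ (and $\Delta$), which ranges over all of $(0,1)\cap\Q$, so this produces a bound that varies with $B$ and does not prove the theorem as stated. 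You cannot perturb $B$ into a fixed finite coefficient set that is independent of $B$, because there is no a priori bound on the denominators of $B$.

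The paper's proof avoids this by \emph{only ever invoking Theorem~\ref{thm-main} with $I=\emptyset$}, i.e.\ applied to an honest klt surface with $-K$ ample and no boundary. This requires the entire machinery of Steps~2--5 (in the paper's proof): running a $(-K_X)$-MMP to a model $Z$ with $-K_Z$ ample and writing $K_X + \Delta_1 = h^*K_Z$ (Step~2, for $D$ nef, using Fujita vanishing in the $F$-regular branch); reducing via a $D$-MMP and relative Kawamata--Viehweg to a Mori fibre space over a curve (Step~3); then reducing to $B=bE$, $\Delta=0$ for a single curve $E$ with $E^2<0$ by pushing $F$'s into $D$ (Step~4); and finally contracting $E$ and applying Theorem~\ref{thm-main} to the resulting surface $Y$ (Step~5). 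In each branch the boundary that actually enters Theorem~\ref{thm-main} has been stripped away. Your sketch of the liftable case is otherwise close in spirit to Lemma~\ref{l_W2_vanishing} (logarithmic Deligne--Illusie plus Leray and relative KV), and your $F$-regular case is the standard Frobenius-tower argument, but without the MMP reductions that eliminate the dependence on $I$, the argument as you've outlined it proves a strictly weaker, coefficient-dependent statement.

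A smaller remark: you should not worry that the bound for \cite[Corollary 3.8]{hara98a} might depend on the liftable model. For surfaces that logarithmic Deligne--Illusie result only requires $p>2$; the content of Lemma~\ref{l_W2_vanishing} does not need $p$ large beyond that, and the size of $p_0$ comes entirely from Theorem~\ref{thm-main}.
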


Note that Theorem \ref{thm-main} imposes a condition on the coefficients of the boundary divisor $B$, 
but Theorem \ref{t_kv} does not need such an assumption. 

Finally, we show that Theorem~\ref{thm-main} does not hold in characteristic two. 

\begin{theorem}\label{t_char2_example}
There exists a projective klt surface $X$ over $\overline{\mathbb F}_2$ 
which satisfies the following properties:
\begin{enumerate}
\item{$-K_X$ is ample,}
\item{$X$ is not globally $F$-split, and }
\item{for any log resolution $h\colon W \to X$ of $X$, the pair
$(W, \Exc(h))$ does not lift to characteristic zero over a smooth base  (cf.\ Definition~\ref{d-liftable}). }
\end{enumerate}
\end{theorem}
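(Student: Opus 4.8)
The plan is to produce the example by an explicit construction and then to read off all three properties from a single input: the failure of Kawamata--Viehweg vanishing on $X$. Concretely, the goal is a normal projective surface $X$ over $\overline{\mathbb F}_2$ together with an ample Cartier divisor $A$ such that $X$ is klt, $-K_X$ is ample, $-(K_X+A)$ is nef and big, and
\[
H^1(X,\mathcal O_X(-A))\neq 0 .
\]
Granting this, property (1) holds by construction, while (2) and (3) follow formally from the displayed non-vanishing, as I explain next; so the entire content of the theorem sits in the construction.

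For (2): if $X$ were globally $F$-split then, writing $L=\mathcal O_X(A)$, tensoring the splitting $\mathcal O_X\hookrightarrow F_*\mathcal O_X$ by $L^{-1}$ and iterating gives split injections $H^1(X,L^{-1})\hookrightarrow H^1(X,L^{-p^n})$ for all $n$ (using $F_*\mathcal O_X\otimes L^{-1}\cong F_*(L^{-p})$ and $H^i(X,F_*(-))=H^i(X,-)$). Since $X$ is a normal, hence Cohen--Macaulay, projective surface, Serre duality identifies $H^1(X,L^{-p^n})$ with $H^1(X,\omega_X\otimes L^{p^n})^{\vee}$, which vanishes for $n\gg 0$ by Serre vanishing. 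Hence $H^1(X,L^{-1})=0$, contradicting the displayed non-vanishing; so $X$ is not globally $F$-split.

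For (3): suppose $h\colon W\to X$ is a log resolution with $(W,\Exc(h))$ liftable to characteristic zero over a smooth base (cf.\ Definition~\ref{d-liftable}), and let $\mathcal W_S\to S$ be such a lift, with special fibre $W$ over $\overline{\mathbb F}_2$ and a fibre over a field of characteristic zero; note $W$ is a smooth rational surface, so $H^i(W,\mathcal O_W)=0$ for $i>0$. On $W$ take a $\mathbb Z$-divisor $D=h^*(-A)+F$, with $F\geq 0$ exceptional and small, chosen so that $h_*\mathcal O_W(D)=\mathcal O_X(-A)$, $R^1h_*\mathcal O_W(D)=0$ (the standard numerical condition on the negative-definite configuration $\Exc(h)$, valid in any characteristic), and so that $D-K_W$ is the pullback of a nef and big $\mathbb Q$-divisor plus an effective $\mathbb Q$-divisor on $\Exc(h)$ with simple normal crossing fractional part — this is the usual Kawamata--Viehweg bookkeeping for the klt pair, made possible by $-(K_X+A)$ being nef and big. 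By the Leray spectral sequence $H^1(W,\mathcal O_W(D))\cong H^1(X,\mathcal O_X(-A))\neq 0$. Since $W$ is rational its Picard group lifts over $S$ (after an \'etale base change if necessary), so $D$ extends to $D_S$ on $\mathcal W_S$; the components of $\Exc(h)$ extend by hypothesis and stay simple normal crossing, and the numerical positivity of $D-K_W$ is inherited by the characteristic-zero fibre because the N\'eron--Severi lattice is constant in the family. On that fibre Kawamata--Viehweg vanishing gives $H^1=0$. Moreover $H^0(\mathcal W_s,\mathcal O(D_s))=0$ and $H^2(\mathcal W_s,\mathcal O(D_s))=0$ for all $s\in S$ — the negative-definite configuration $\Exc(h)$ can be contracted relatively over $S$, and downstairs $\mathcal O(-A_s)$ is anti-ample, so the first vanishing holds and the second follows by Serre duality. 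Hence by flatness $h^1(\mathcal W_s,\mathcal O(D_s))=-\chi(\mathcal W_s,\mathcal O(D_s))$ is constant in $s$, and therefore equals $0$. This contradicts $H^1(W,\mathcal O_W(D))\neq 0$; so no log resolution of $X$ lifts.

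It remains to construct $X$, and this is the crux. I would realise $X$ as the contraction of a negative-definite configuration of curves on an explicit smooth rational surface $W$ over $\overline{\mathbb F}_2$ for which Kawamata--Viehweg vanishing fails, exploiting a characteristic-two phenomenon — for instance a quasi-elliptic fibration $W\to\mathbb P^1$ together with its curve of cusps, or a suitable purely inseparable cover, in the spirit of the counterexample of \cite{CTW15a}. The surface and the contraction must be arranged so that $X$ is simultaneously klt with $-K_X$ ample, so that there is an ample Cartier divisor $A$ with $-(K_X+A)$ nef and big, and so that the resulting line bundle computation yields $H^1(X,\mathcal O_X(-A))\neq 0$. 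Pinning down such a configuration, verifying kltness and anti-ampleness, and carrying out the cohomology computation are where the characteristic-two geometry is used in an essential way and where the real difficulty lies; everything else, including the compatibility of the positivity hypotheses with the lifted family in the proof of (3), is routine.
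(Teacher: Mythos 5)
Your proposal reduces the whole theorem to a single desideratum -- a klt surface $X$ over $\overline{\mathbb F}_2$ with $-K_X$ ample, an ample Cartier divisor $A$ with $-(K_X+A)$ nef and big, and $H^1(X,\mathcal O_X(-A))\neq 0$ -- and then deduces (2) from the Frobenius trick and (3) from a lifting/flatness argument. The derivation of (2) from the non-vanishing is correct and elementary. The derivation of (3) is in the same spirit as Lemma~\ref{l_W2_vanishing}, and while some of the steps you gloss over are genuinely delicate (lifting the divisor $D$ across the family, controlling $H^0$ and $H^2$ in every fibre, and ensuring the Kawamata--Viehweg positivity/boundary data persists on the characteristic-zero fibre), the overall shape of that reduction is plausible.

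The genuine gap is the construction itself, which you explicitly defer. The entire content of the theorem is the existence of an $X$ with the required features, and your proposal does not produce one: it points vaguely at ``a quasi-elliptic fibration, or a suitable purely inseparable cover, in the spirit of \cite{CTW15a}'' and concedes that ``pinning down such a configuration \ldots is where the real difficulty lies.'' As written, this is a proof of the implication ``if such an $X$ exists, then Theorem~\ref{t_char2_example} holds,'' not a proof of the theorem.

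The paper takes a different route that sidesteps the KV-nonvanishing framing entirely. It constructs $X$ explicitly: starting from the Fermat cubic surface $Z$ over $\overline{\mathbb F}_2$, which is not globally $F$-split, it exhibits six points on $\mathbb P^2$ so that $Z$ is their blow-up, exploits the characteristic-two coincidence that every line through $q_0$ is tangent to the conic through $q_1,\dots,q_5$ (Lemma~\ref{l_non_gfs_const}(3)), blows up five more points, and contracts eleven disjoint negative curves as in \cite[Section~9]{km99} to obtain a rank-one klt del Pezzo $X$ with eleven singular points. Property~(2) then descends from $Z$ through the blow-ups and contractions. For (3), rather than arguing cohomologically, the paper uses the Weil conjectures (Lemma~\ref{l_weil_conj}) to show that any lift over a smooth base preserves the Picard number, rebuilds the contraction $Y_{\mathbb C}\to X_{\mathbb C}$ over $\mathbb C$, and derives a contradiction from \cite[Theorem~9.2]{km99}, which bounds by six the number of singular points of a rank-one klt del Pezzo over $\mathbb C$. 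This numerical argument avoids both the relative contraction issue and the flatness bookkeeping in your sketch of (3). In short: your framework is a reasonable alternative outline, but it is missing the example, and the paper's proof of (3) is genuinely different (Weil conjectures plus a singularity bound, not a direct cohomological comparison).
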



%
%

\medskip

We prove Theorem \ref{thm-main} in Section \ref{s_proof-main-thm}, Theorem \ref{t_kv} in Section \ref{s_kv} and Theorem 
\ref{t_char2_example} in Section \ref{s_char2_example}.

\subsection{Sketch of the proof } 
We now give an overview of some of the steps of the  proof of Theorem~\ref{thm-main}.

For simplicity, we assume that $I=\emptyset$. 
Let $X$ be a projective klt surface in large characteristic and such that $-K_X$ is ample. 
We want to show that at least one of the statements (1) and (2) in Theorem~\ref{thm-main} holds true. 
The idea  is that, choosing a suitable $\epsilon \in \Q_{>0}$, we consider two different cases:  
\begin{enumerate}
\item[(a)]{$X$ is $\epsilon$-klt.}
\item[(b)]{$X$ is not $\epsilon$-klt.}
\end{enumerate}

\medskip

(a) 
It is known that, for a fixed algebraically closed field $k$,  
the set of all the $\epsilon$-klt del Pezzo surfaces over $k$ forms a bounded family \cite{alexeev94}. 
We  generalise this boundedness result to mixed characteristic, i.e.\ 
there exists a projective morphism $\mathcal X \to S$ of schemes of finite type over $\Spec\,\Z$, 
depending only on $\epsilon$,  
such that an arbitrary $\epsilon$-klt del Pezzo surface $X$ over an arbitrary algebraically closed field $k$ of characteristic $p>5$ 
is isomorphic to some fibre $\mathcal X_s$ up to a base change of the base field. 
By  noetherian induction, 
we can show that for $p \gg 0$, 
any $\epsilon$-klt del Pezzo surface $X$ over an algebraic closed field of characteristic $p$ 
admits a log resolution $\mu\colon V \to X$ such that 
$(V, \Exc(\mu))$ lifts to characteristic zero over a smooth base. 
For more details, see Section~\ref{section-bounded-case}. 

\medskip

(b) 
Assume that $X$ is not $\epsilon$-klt. 
In this case, the proof consists of the following three steps:

\begin{enumerate}
\item[(I)]{Since $X$ is not $\epsilon$-klt, 
we can extract a prime divisor $C$ with  log discrepancy $a:=a(C, X, 0)\leq \epsilon$ by: 
\[
f\colon Y \to X,\quad\text{so that} \quad K_Y+(1-a)C=f^*K_X.
\]
}
\item[(II)]{We run a $-(K_Y+C)$-MMP with scaling of $C$: 
\[
(Y, C)=:(Y_0, C_0) \to (Y_1, C_1) \to \cdots \to (Y_n, C_n)=:(Z, C_Z).
\]
Note that since  $(Y, \Delta)$ is log del Pezzo for some effective $\mathbb Q$-divisor $\Delta$, we may run a $D$-MMP for any $\mathbb Q$-Cartier divisor $D$. 
Indeed, let $s>0$ be a sufficiently small rational number such that $A:=sD-(K_Y+\Delta)$ is ample. Then,
\[
sD=K_Y+\Delta +A
\] 
and in particular, a $D$-MMP coincides with a $(K_Y+\Delta+A)$-MMP. 
Furthermore, if
 $\epsilon$ is sufficiently small, then since $1-a\in [1-\epsilon,1)$, it follows that 
each pair $(Y_i, C_i)$ is log canonical by ACC for the log canonical threshold \cite[Theorem 1.1]{alexeev93}.}
\item[(III)]{We now consider five different cases as follows: 
\begin{enumerate}
\item[(i)]{$\kappa(Z, -(K_Z+C_Z))=2$.}
\item[(ii)]{$\kappa(Z, -(K_Z+C_Z))=1$.}
\item[(iii)]{$\kappa(Z, -(K_Z+C_Z))=0$ and $(Z, C_Z)$ is not plt.}
\item[(iv)]{$\kappa(Z, -(K_Z+C_Z))=0$ and $(Z, C_Z)$ is plt.}
\item[(v)]{$\kappa(Z, -(K_Z+C_Z))=-\infty$.}
\end{enumerate}
If one of the cases (i), (ii), or (iii) holds, then we can show that $X$ is globally $F$-regular, 
i.e.\ (1) in Theorem~\ref{thm-main} holds. 
If (iv) holds, then (2) in Theorem~\ref{thm-main} holds. 
Finally, we  show that (v) does not occur if $\epsilon$ is sufficiently small. }
\end{enumerate}

\medskip

We now give some details of the methods we use in the  steps above. After Step (I), 
there exists a rational number $b\in (1-a,1)$, such that  $(Y, bC)$ is klt and $-(K_Y+bC)$ is ample. 
The easiest case  is if $-(K_Y+C)$ is ample and $(p^e-1)(K_Y+C)$ is Cartier for some positive integer $e$. 
In this case, Lemma \ref{l_gladj} implies immediately that  $(Z, C_Z)$ is globally $F$-split, 
which in turn implies that $X$ is globally $F$-regular. 
However, $-(K_Y+C)$ is not ample in general, but by running an MMP in Step (II), 
we can get closer to this situation. 
Indeed, in case (i) of Step (III), we have that $-(K_Z+C_Z)$ is nef and big, and after contracting the curves $\Gamma$ with $(K_Z+C_Z)\cdot \Gamma=0$, 
we may assume that $-(K_Z+C_Z)$ is ample. 
By using Lemma \ref{l_gladj}, we can show that $X$ is globally $F$-regular. 
We can apply a similar argument to show that $X$ is globally $F$-regular in cases (ii) and (iii) of Step (III). 

Let us assume now that (iv) of Step (III) holds. 
In this case, we first show that 
the set of pairs $(Z, C_Z)$ as in (iv) forms a bounded family. 
By  noetherian induction, 
we  show that for $p \gg 0$, 
any $(Z, C_Z)$ has a log resolution $\pi \colon W \to Z$ such that 
$(W, \pi_*^{-1}C_Z \cup \Exc(\pi))$ lifts to characteristic zero over a smooth base. 
If $W$ already dominates $Y$, then the induced birational morphism $W \to X$ 
is a log resolution as in (2) of Theorem~\ref{thm-main}. 
In general, we cannot hope that $W$ dominates $Y$, however we can show that (2) of Theorem~\ref{thm-main} holds after taking some blow-ups of $W$. For more details, we refer to  Section 4, especially Lemma~\ref{l_blow_up_lift}.

Finally, let us assume that (v) of Step  (III) holds. 
In particular,  $Z$ admits a $-(K_Z+C_Z)$-negative Mori fibre space. Let us assume for simplicity that $\rho(Z)=1$ and $K_Z+C_Z$ is ample. 
Since $-(K_Y+bC)$ is ample, so is its push-forward $-(K_Z+bC_Z)$. 
Thus, we may find $b'\in (b,1)$ such that 
$$K_Z+b'C_Z \equiv 0.$$
In particular, $b'\in (1-\epsilon, 1)$, and after possibly replacing $\epsilon$ by a smaller number, we derive a contradiction thanks to  ACC for the log Calabi--Yau pairs \cite[Theorem 5.3]{alexeev93} (cf.\ Lemma \ref{l_acc.num}).

\medskip

\textbf{Acknowledgement: } 
We would like to thank P.\ Achinger, Y.\ Gongyo, Y.\ Kawamata, A.\ Langer, C.\ Liedtke, Z.\ Patakfalvi, A.\ Sannai, K.\ Schwede and S.\ Takagi for many useful discussions and comments. 
We would like to thank the referee for
carefully reading our manuscript and for suggesting several improvements.

\section{Preliminaries}

If not stated otherwise, we work over an algebraically closed field $k$ of characteristic $p>0$. 
We say that $X$ is a {\em variety} over an algebraically closed field $k$, if $X$ is an integral  scheme which is separated and of finite type over $k$. 
A {\em curve} is a variety of dimension one, and a {\em surface} is a variety of dimension two. 
We say that a scheme $X$ is {\em normal} if the local ring $\mathcal O_{X, x}$ is an integrally closed integral domain for any point $x \in X$. 
In particular, if $X$ is a noetherian normal scheme, then $\Gamma(U, \mathcal O_X)$ is an integrally closed integral domain 
for any irreducible affine open subset $U$ of $X$. Given a proper morphism $f\colon X\to Y$ between normal varieties, we say that  two 
$\mathbb Q$-Cartier $\mathbb Q$-divisors 
$D_1,D_2$ on $X$ are {\em numerically equivalent over} $Y$, denoted $D_1\equiv_f D_2$, if their difference is numerically trivial on any fibre of $f$.  

We refer to \cite{KM98} for the classical definitions of singularities (e.g., {\em klt, plt, log canonical}) appearing in the minimal model programme. 
A {\em log pair} $(X,\Delta)$ consists of a normal variety $X$ and a $\mathbb Q$-divisor $\Delta\ge 0$ such that $K_X+\Delta$ is $\mathbb Q$-Cartier. 
Note  that we always assume that a klt (resp.\ plt, log canonical) pair $(X, \Delta)$ 
 is  a log pair, and in particular $\Delta$ is an effective $\mathbb Q$-divisor. 
Given a log pair $(X,B)$ and a divisorial valuation $E$ over $X$, we denote by $a(E,X,B)$ the {\em log discrepancy} of $(X,B)$ with respect to $E$.
For $\varepsilon>0$, we say that $(X,B)$ is $\varepsilon$-{\em klt} if $a(E,X,B)>\varepsilon$ for any divisorial valuation $E$ over $X$. 
A two dimensional projective log pair $(X, \Delta)$ is {\em log del Pezzo} if 
$(X, \Delta)$ is klt and $-(K_X+\Delta)$ is ample. 

Given a subset $I\subseteq [0,1]$, we say that $I$ is an {\em ACC} (resp.\ {a \em DCC}) set if
it satisfies the ascending chain condition (resp.\ the descending chain
condition). Given a subset  $I\subseteq [0,1]$, we define:
\[
I_+:=\Big\{ { \sum_{j=1}^m  i_j} \; \big| \; i_j\in I \text{ for } j=1,\dots,m \Big\}\cap [0,1],
\]
and
\[
D(I):=\Big\{ \frac{m-1+f}{m} \; \mid \; m\in \mathbb Z_{>0}, f\in I_+\Big\}\cap [0,1].
\]

For the definitions of $F$-singularities, 
we refer to \cite[Definition 3.1]{schwedesmith10} and \cite[Definition 1.6]{CTW15a}.

\subsection{Log canonical surface singularities}\label{s_diff}

Let $X$ be a normal variety over an algebraically closed field $k$. 
Let $C$ be a prime divisor on $X$ such that $K_X+C$ is $\mathbb Q$-Cartier and let 
 $C' \to C$ be its normalisation. 
Then there exists an effective $\mathbb Q$-Cartier $\mathbb Q$-divisor $\Diff_{C'}$ on $C'$ such that 
\[
(K_X+C)|_{C'}=K_{C'}+\Diff_{C'}.
\]
Moreover, if $(X, C)$ is log canonical, then 
the coefficients of $\Diff_{C'}$ are standard coefficients, 
i.e.\ they are contained in the set $\{1\} \cup \{1-\frac{1}m\mid m\in \mathbb Z_{>0}\}$ 
(see \cite[Proposition-Definition 16.5]{Kollaretal} and \cite[Definition 4.2 and Proposition 4.5]{kollar13}).

Let $X$ be a normal surface. A singular point $q\in X$  is said to be  {\em cyclic} if the exceptional locus of the minimal resolution of $X$ at $q$ is a chain of rational curves $C_1,\dots,C_m$. 
We denote by $\Gamma_q$ the intersection matrix $(-C_i \cdot C_j)$ associated to $C_1,\dots,C_m$. In particular, if $d$ is a positive integer, then  there are only finitely many possibilities for $\Gamma_q$ so that $\det \Gamma_q=d$ \cite[pag.\ 116]{kollar13}.

A singular point $q\in X$ is said to be {\em dihedral} if the exceptional locus of the minimal resolution of $X$ at $q$ is the union of rational curves $C_1,\dots,C_m$, with $m\ge 4$,  where $C_1,\dots,C_{m-2}$ is a chain of curves and $C_{m-1}, C_m$ are $(-2)$-curves which meet  $C_{m-2}$ transversally in two distinct points away from $C_{m-3}\cap C_{m-2}$ \cite[3.35(3)]{kollar13}.

\begin{lemma}\label{l_klt_Cartier}
Let $f \colon X \to Y$ be a proper birational morphism between klt surfaces. 
If $L$ is a Cartier divisor on $X$ such that $L \equiv_f 0$, then 
there exists a Cartier divisor $L_Y$ on $Y$ such that $\mathcal O_X(L) \simeq \mathcal O_X(f^*L_Y)$. 
In particular, if $D$ is a Weil divisor on $Y$ such that  $f^*D$ is Cartier, then  $D$ is  Cartier. 
\end{lemma}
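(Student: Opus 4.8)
The plan is to deduce the statement from the negative-definiteness of the intersection form of the exceptional divisor of $f$, together with the fact that on a klt surface every Weil divisor is $\mathbb Q$-Cartier. First I would reduce to the case where $f$ is the blow-down of a single $f$-exceptional curve; indeed, $f$ factors as a composition of such contractions, and since the $\mathbb Q$-factoriality of klt surfaces is preserved, one can argue inductively. So assume $\Exc(f) = E$ is a single irreducible curve with $E^2 < 0$. The point is that the group $\operatorname{Pic}(X/Y)$, or rather its image in $N^1(X/Y)$, is generated by classes numerically trivial relative to $Y$; I would like to say that a Cartier divisor $L$ with $L \equiv_f 0$ is actually $f$-numerically trivial in the strongest sense and hence $\mathcal O_X(L)$ is pulled back.

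The key step is the following local statement: if $L$ is a Cartier divisor on $X$ with $L \cdot E = 0$, then $L \sim f^* L_Y$ for some Cartier $L_Y$ on $Y$. To see this, note $L_Y := f_* L$ is a Weil divisor on $Y$, hence $\mathbb Q$-Cartier because $Y$ is klt (klt surfaces are $\mathbb Q$-factorial), so $f^* L_Y$ makes sense. Then $L - f^* L_Y$ is supported on $E$, say $L - f^* L_Y \equiv_{\mathbb Q} \lambda E$ for some $\lambda \in \mathbb Q$ (up to $\mathbb Q$-linear equivalence over $Y$, everything $f$-exceptional is a multiple of $E$). Intersecting with $E$: since $L \cdot E = 0$ and $f^* L_Y \cdot E = 0$ by the projection formula, we get $\lambda E^2 = 0$, and $E^2 < 0$ forces $\lambda = 0$. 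Hence $L \equiv_{f} f^* L_Y$, in fact $L - f^* L_Y$ is $\mathbb Q$-linearly trivial over $Y$; since $L - f^* L_Y$ is an actual Cartier divisor with zero pushforward, and $f$ has connected fibres with $R^1 f_* \mathcal O_X$ behaving suitably on a klt surface (rational singularities), one concludes $\mathcal O_X(L - f^* L_Y) \simeq \mathcal O_X$, possibly after adjusting $L_Y$ by a Cartier divisor on $Y$. Finally, for the last assertion: if $D$ is Weil on $Y$ with $f^* D$ Cartier, apply the first part to $L := f^* D$, which satisfies $L \equiv_f 0$, to get $f^* D \sim f^* L_Y$ with $L_Y$ Cartier; pushing forward, $D \sim L_Y$, so $D$ is Cartier.

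I expect the main obstacle to be the passage from numerical triviality over $Y$ to the genuine isomorphism $\mathcal O_X(L) \simeq \mathcal O_X(f^*L_Y)$, i.e. handling the $\operatorname{Pic}$ versus $N^1$ discrepancy near the fibre. This is where one needs that klt surface singularities are rational, so that $f_* \mathcal O_X = \mathcal O_Y$ and $R^1 f_* \mathcal O_X = 0$, which gives $\operatorname{Pic}(X/Y)$ no "extra" numerically trivial classes and lets one descend the line bundle; the theorem on formal functions or the standard fact that for a birational morphism of surfaces with rational singularities a line bundle trivial on every fibre is a pullback does the job. One subtlety to be careful about is that $f$ need not be the minimal resolution and $E$ need not be a smooth rational curve, but all that is used is $E^2<0$ and rationality of the singularities of $Y$, both of which hold for klt surfaces, so the argument goes through.
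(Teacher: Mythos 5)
Your approach is genuinely different from the paper's: the paper passes to the minimal resolution $\tilde X\to X$, runs a $(K_{\tilde X}+E)$-MMP over $Y$ to reduce to a single extremal contraction of relative Picard number one, and invokes the base point free theorem (as in \cite[Theorem 3.7(4)]{KM98}) to descend the line bundle; you instead try to work directly on $X$, factor $f$ into single blow-downs, and rely on negative definiteness plus rationality of klt surface singularities. The latter route can work, but as written there is a real gap at the crux of the argument.

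Your computation correctly shows $L=f^*L_Y$ as $\mathbb Q$-divisors, where $L_Y:=f_*L$. But $L_Y$ is a priori only a Weil divisor, $\mathbb Q$-Cartier by $\mathbb Q$-factoriality. The content of the lemma is precisely that $L_Y$ is \emph{Cartier}, i.e.\ that $\mathcal O_X(L)$ descends to an invertible sheaf on $Y$; once $L=f^*L_Y$ the observation "$\mathcal O_X(L-f^*L_Y)\simeq\mathcal O_X$" is vacuous and establishes nothing. What needs to be proven is that $f_*\mathcal O_X(L)$ is invertible and $f^*f_*\mathcal O_X(L)\to \mathcal O_X(L)$ is an isomorphism, and here your appeal to "the standard fact" is doing all the work while hiding the delicate point: the formal-functions / Lipman descent argument requires $\mathcal O_X(L)$ to be \emph{trivial} on (the infinitesimal neighborhoods of) the fibres, not merely of degree zero. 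For $E$ a smooth rational curve degree zero does imply triviality, but you yourself flag that $E$ need not be smooth when $f$ is not a resolution, and then you wave the concern away; for a singular $E$ with $\Pic^0(E)\neq 0$ this is exactly where the argument could break. This is also why your justification of the "in particular" clause is circular in practice — it invokes the Cartierness of $L_Y$, which is the very thing the descent step was supposed to deliver. The paper's detour through the minimal resolution is not cosmetic: there all $g$-exceptional curves are smooth $\mathbb P^1$'s, the line bundle is pulled back from $Y$ via either BPF or Lipman-type descent, and then pushing forward by $h_*$ (using $h_*\mathcal O_{\tilde X}=\mathcal O_X$ and the projection formula) returns the statement on $X$. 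To repair your proposal, either route through the minimal resolution as the paper does, or supply an argument that all exceptional curves arising in your factorisation are smooth rational (and that the intermediate surfaces stay klt), which you have asserted but not shown.
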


\begin{proof}
Since a klt surface is automatically quasi-projective (e.g.\ see \cite[Lemma 2.2]{fujino12}),
by taking suitable compactifications, we may assume that $X$ and $Y$ are projective.
Let  $h\colon \tilde{X}\to X $ be  the minimal resolution and let $g\colon \tilde X\to Y$ be the induced morphism. It is enough to show that there exists a Cartier divisor $L_Y$ on $Y$ such that  $\mathcal O_{\tilde X}(h^*L)\simeq 
\mathcal O_{\tilde X}(g^*L_Y)$. Let $E$ be the sum of the exceptional prime divisors of $g$. 
By running a $(K_{\tilde X}+E)$-MMP over $Y$, we may assume that $f$ is a birational morphism between projective klt surfaces such that $\rho(X/Y)=1$.

By using the base point free theorem over surfaces \cite[Theorem 3.2 and Corollary 3.6]{tanaka12a}, 
we may apply the same argument as \cite[Theorem 3.7(4)]{KM98} to conclude. 
\end{proof}

\begin{lemma}\label{l_diff}
Let $(X, C)$ be a two dimensional plt pair, where $C$ is a prime divisor. 
If $q \in C$ is a singular point of $X$, 
then $q$ is cyclic and  the coefficient of  $\Diff_C$ at $q$ is given by 
$1-\frac1{m_q}$, where $m_q:=\det(\Gamma_q).$ 
Further, for each Weil divisor $D$ on $X$, we have that $m_qD$ is Cartier around $q$. 
\end{lemma}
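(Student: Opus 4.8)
The plan is to reduce everything to the structure of the minimal resolution. First I would recall the classification of two-dimensional plt singularities: if $(X,C)$ is plt with $C$ a prime divisor and $q\in C$ a singular point of $X$, then by the classification of log canonical surface pairs (e.g.\ \cite[Chapter 3]{kollar13}, \cite[\S16]{Kollaretal}) the singularity $q$ must be cyclic (a chain of rational curves), and moreover the strict transform $\tilde C$ of $C$ on the minimal resolution meets only the outermost curve of the chain, transversally. The key point is that plt-ness forces $\lfloor C\rfloor = C$ to have a very constrained dual graph: a non-cyclic (dihedral or of $E$-type, or a node/cusp on $C$) configuration would produce a log discrepancy $\le 0$ along one of the exceptional curves, contradicting plt. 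So Step~1 is to invoke this classification to get that $q$ is cyclic with $\tilde C$ attached at one end of the chain $C_1,\dots,C_m$.

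Step~2 is the computation of the coefficient of $\Diff_C$ at $q$. By adjunction $(K_X+C)|_{C'} = K_{C'}+\Diff_{C'}$, and since $(X,C)$ is plt the coefficient of $\Diff_C$ at $q$ is a standard coefficient $1-\frac1{n_q}$ for some positive integer $n_q$ (this is already recalled in \S\ref{s_diff}). It remains to identify $n_q$ with $m_q = \det(\Gamma_q)$. I would do this by the standard local analytic description: a cyclic quotient singularity of order $d=\det(\Gamma_q)$ is $\frac1{d}(1,a)$ for suitable $a$ coprime to $d$, and one computes directly — or cites \cite[3.35, 4.5]{kollar13} — that the different of the boundary curve through such a point (the image of one coordinate axis) has coefficient $1-\frac1d$. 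Alternatively, one pulls back $K_X+C$ to the minimal resolution, writes $h^*(K_X+C) = K_{\tilde X} + \tilde C + \sum a_i C_i$ with the $a_i$ determined by the intersection-matrix equations $(\,h^*(K_X+C)\,)\cdot C_i = 0$, and then restricts to $\tilde C$: the coefficient of $\Diff_C$ at $q$ equals $1 - (\text{something})$ where that something is read off from the $a_i$ along the chain; a short linear-algebra identity (Cramer's rule on $\Gamma_q$) gives exactly $\frac1{\det \Gamma_q}$.

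Step~3 is the Cartier statement. Given a Weil divisor $D$ on $X$, I want to show $m_q D$ is Cartier near $q$. Since the question is local, I may assume $X$ is the cyclic quotient singularity of order $m_q$; then the local class group of Weil divisors modulo Cartier divisors is cyclic of order $m_q$ (this is classical for cyclic quotient surface singularities, and also follows because $\det\Gamma_q$ is the order of $H^1$ of the link / the torsion of the divisor class group), so $m_q D$ is Cartier. Concretely one can also argue: $m_q D \equiv_h 0$ on the minimal resolution after subtracting the appropriate exceptional $\mathbb Q$-divisor, that exceptional part has integral coefficients because $m_q$ clears the denominators $\det\Gamma_q$, and then Lemma~\ref{l_klt_Cartier} descends it to a Cartier divisor on $X$.

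I expect Step~2 — pinning down the coefficient of the different as exactly $1-\frac1{\det\Gamma_q}$ rather than $1-\frac1n$ for some other $n$ dividing (or divided by) $\det\Gamma_q$ — to be the main bookkeeping obstacle, because it requires either a careful local normal-form computation for cyclic quotient singularities or a careful continued-fraction / determinant identity along the chain; the plt classification in Step~1 and the descent in Step~3 are comparatively routine given \cite{kollar13} and Lemma~\ref{l_klt_Cartier}.
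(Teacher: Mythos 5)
Your proposal is correct and its concrete route is essentially the paper's own: cite \cite[Theorem 3.36]{kollar13} for the cyclic classification and the coefficient of $\Diff_C$, then pull back $D$ to the minimal resolution, apply Cramer's rule to the intersection matrix $\Gamma_q$ (determinant $\pm m_q$) to see that $m_q$ clears the denominators of the exceptional coefficients, and descend via Lemma~\ref{l_klt_Cartier}. The alternative routes you sketch (local analytic description as a cyclic quotient singularity, or the order of the local divisor class group) would also work but are not what the paper uses.
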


\begin{proof} The first part of the lemma follows from \cite[Theorem 3.36]{kollar13}. We are left to show that if $D$ is a Weil divisor on $X$, then $m_qD$ is Cartier at $q$. Let $f\colon Y\to X$ be the minimal resolution of $X$ at $q$. Then the exceptional divisor is a chain of rational curves $C_1,\dots,C_r$. We may write 
\[
f^*D=D_Y+\sum_{i=1}^r b_i C_i,
\]
 where $D_Y$ is the strict transform of $D$ in $Y$ and  $b_1,\dots,b_r\in \mathbb Q$.
Thus, for any $j=1,\dots,r$, we have 
\[
\sum_{i=1}^r b_i C_i\cdot C_j=-D_Y\cdot C_j.
\] 
Since $D_Y\cdot C_j\in \mathbb Z$, it follows that $m_q b_i\in \mathbb Z$ for each $i=1,\dots,r$. Since $X$ is klt at $q$, Lemma \ref{l_klt_Cartier} implies the claim.  
\end{proof}

%

We   need the following version of ACC for the nef threshold in dimension one and two:

\begin{lemma}\label{l_acc.num}
Let $I\subseteq (0,1]\cap \mathbb Q$ be a DCC set. Then there exists a finite subset $J\subseteq I$
 which satisfies the following property:

Let $(X,B)$ be a projective log canonical pair and let  $\pi \colon X \to T$ be a projective morphism onto a normal variety $T$ such that $\pi_*\mathcal O_X=\mathcal O_T$ and
\begin{itemize}
\item $\dim X\le 2$,
\item $\dim T<\dim X$, 
\item the coefficients of $B$ belong to $I$, and 
\item $K_X+B\equiv_\pi 0$.
\end{itemize}
Then the coefficient of any $\pi$-horizontal component of $B$ belongs to $J$.  

\end{lemma}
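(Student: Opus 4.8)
The plan is to split on the relative dimension $d:=\dim X-\dim T\in\{1,2\}$. Observe first that a DCC subset of $(0,1]$ cannot contain an infinite strictly decreasing sequence, hence cannot accumulate at $0$, so $\delta:=\inf I>0$. If $d=2$, then $\dim X=2$ and $T=\Spec k$, so $K_X+B\equiv 0$ and every component of $B$ is $\pi$-horizontal; here I would simply invoke the ACC for numerically trivial log canonical surface pairs \cite[Theorem 5.3]{alexeev93}, which says the set of coefficients occurring in such pairs is ACC, and since it is also contained in the DCC set $I$ it is finite. (The case $\dim X=1$, $T=\Spec k$ is the degenerate instance of the $d=1$ argument below, with $k(T)=k$.)

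For $d=1$ I would pass to the generic fibre. Set $K:=k(T)$ and let $X_\eta:=X\times_T\Spec K$, an integral regular projective curve over $K$ with $H^0(X_\eta,\mathcal O_{X_\eta})=K$ (using $\pi_*\mathcal O_X=\mathcal O_T$), and regular because a normal surface has finite singular locus, disjoint from the generic fibre. Each $\pi$-horizontal prime component $D_i$ of $B$ is finite over $T$, hence meets $X_\eta$ in a single closed point $P_i$ of degree $\mu_i:=[k(D_i):k(T)]\ge 1$ over $K$; distinct horizontal components give distinct points, and the $\pi$-vertical part of $B$ is disjoint from $X_\eta$. Since $\pi$ is flat and $K_{X/T}=K_X-\pi^*K_T$ restricts to $K_{X_\eta}$ on the generic fibre while $(\pi^*K_T)|_{X_\eta}\sim 0$, we get $K_{X_\eta}\sim K_X|_{X_\eta}$, so $(K_X+B)|_{X_\eta}=K_{X_\eta}+\sum_{D_i\ \mathrm{hor}}b_iP_i$, a $\Q$-divisor of degree $0$ because $K_X+B\equiv_\pi 0$. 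Hence $\sum_i b_i\mu_i=\deg(-K_{X_\eta})=2-2p_a(X_\eta)$ with $p_a(X_\eta)=h^1(\mathcal O_{X_\eta})\in\Z_{\ge 0}$; being a sum of positive numbers, the left-hand side lies in $\{0,2\}$. If it equals $0$ there are no horizontal components; if it equals $2$, then $b_i\ge\delta$ and $\mu_i\ge 1$ bound the number of horizontal components and each $\mu_i$ by $2/\delta$.

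It remains to show that the set $J_0\subseteq I$ of values occurring as some $b_i$ in a relation $\sum_i b_i\mu_i=2$ of this kind — ranging over all admissible data and all base fields $K$ — is finite. As $J_0\subseteq I$ it is DCC, so if infinite it would contain a strictly increasing sequence $c_1<c_2<\cdots$; pick for each $k$ a witnessing relation with $c_k=b_1^{(k)}$, at most $2/\delta$ terms, and all $\mu_i^{(k)}\le 2/\delta$. Passing to a subsequence, the number $m$ of terms and the integer vector $(\mu_1,\dots,\mu_m)$ become constant and each coordinate sequence $(b_i^{(k)})_k$ monotone; for $i\ge 2$, monotonicity in the DCC set $I$ forces $(b_i^{(k)})_k$ to be eventually non-decreasing, while $(b_1^{(k)})_k$ is strictly increasing, so for $k\gg 0$ the sum $\sum_i b_i^{(k)}\mu_i$ is strictly increasing — contradicting its constant value $2$. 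Thus $J_0$ is finite, and one may take $J:=J_0\cup(\text{the finite coefficient set from \cite[Theorem 5.3]{alexeev93}})$.

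The reduction and the DCC combinatorics are routine; the only delicate point is the passage to the fibre, since in small characteristic the closed fibres of $\pi$ may be non-reduced or non-smooth. Working with the regular curve $X_\eta$ over the (possibly imperfect) field $K$, and with the arithmetic genus $p_a(X_\eta)$ rather than a geometric genus, sidesteps this entirely, which is the reason for arguing over $K$ instead of over a general closed point of $T$; the only external input is the surface statement \cite[Theorem 5.3]{alexeev93}.
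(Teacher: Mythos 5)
Your argument is correct, and it takes a genuinely different route from the paper's, whose entire proof is the citation of \cite[Theorem 5.3]{alexeev93} for the absolute case $\dim T=0$ and of \cite[Proposition 11.7]{birkar13} for the relative case. You keep the first input but replace the second with a self-contained argument on the generic fibre: restricting $K_X+B$ to the regular (not necessarily smooth) projective curve $X_\eta$ over $K=k(T)$ gives $\sum_i b_i\mu_i=\deg(-K_{X_\eta})=2-2p_a(X_\eta)\in\{0,2\}$, which bounds the number of horizontal components and the residue degrees $\mu_i$ by $2/\min I$, after which a routine subsequence extraction inside the DCC set $I$ forces finiteness of the achievable coefficients. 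The paper's citation of Birkar's proposition is more modular and uniform across dimensions; your version makes the mechanism explicit, and, as you note, arguing over the possibly imperfect field $K$ with arithmetic genus is exactly what sidesteps the non-reduced or non-smooth closed fibres that can occur in characteristic $p$ (e.g.\ quasi-elliptic fibrations) and that a fibre-by-fibre argument would otherwise have to confront. The one step worth fleshing out in a final write-up is $K_X|_{X_\eta}\sim K_{X_\eta}$: it holds because $\pi$ is flat with Cohen--Macaulay one-dimensional fibres, so the relative dualizing sheaf $\omega_{X/T}\cong\omega_X\otimes\pi^*\omega_T^{-1}$ exists and base-changes to $\omega_{X_\eta}$; you correctly avoid relying on smoothness or even generic smoothness of $\pi$, which may fail in positive characteristic, and your use of $p_a(X_\eta)$ rather than a geometric genus already reflects this.
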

\begin{proof}
See \cite[Theorem 5.3]{alexeev93} and \cite[Proposition 11.7]{birkar13}.
\end{proof}

To show Proposition~\ref{p_bounded-index}, we need the following:

\begin{lemma}\label{l_klt_index}
Let $(X, \Delta)$ be a two dimensional klt pair such that $X$ is smooth away from a closed point $x\in X$. Let $f\colon Y \to X$ be the minimal resolution of $X$, with  exceptional divisors $E_1,\dots,E_n$ and let $a_i=a(E_i,X,\Delta)$. 
Assume that $n\ge 8$ and let $\sigma$ be a permutation of $\{1,\dots,n\}$ such that  
\[
a_{\sigma(1)}\le a_{\sigma(2)}\le\dots\le a_{\sigma(n)}.
\]
Let $\ell$ be a positive integer such that $\ell\Delta$ is a $\mathbb Z$-divisor and 
\[
\ell a_{\sigma(i)}\in \mathbb Z\qquad\text{for }i=1,\dots,8.
\]

Then $2\ell(K_X+\Delta)$ is Cartier. 
\end{lemma}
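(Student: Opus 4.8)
The plan is to translate the statement ``$2\ell(K_X+\Delta)$ is Cartier'' into the integrality condition ``$2\ell a_i\in\mathbb Z$ for all $i$'', and then to prove that condition by an induction along the dual graph of the minimal resolution, the input being the classification of klt surface singularities.

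\emph{Reduction.} Since $X$ is smooth away from $x$ and $2\ell\Delta$ is a $\mathbb Z$-divisor, $2\ell(K_X+\Delta)$ is a Weil divisor that is Cartier on $X\setminus\{x\}$; by the last sentence of Lemma~\ref{l_klt_Cartier}, applied to $f\colon Y\to X$ (here $Y$ is smooth and $X$ is klt), it is enough to show that $f^*(2\ell(K_X+\Delta))$ is Cartier on $Y$. Writing $d_i:=1-a_i$, the definition of the log discrepancies gives
\[
f^*(K_X+\Delta)=K_Y+f_*^{-1}\Delta+\sum_{i=1}^n d_i E_i,
\]
and since $K_Y$ is Cartier and $\ell f_*^{-1}\Delta=f_*^{-1}(\ell\Delta)$ is a $\mathbb Z$-divisor, $f^*(2\ell(K_X+\Delta))$ is a $\mathbb Z$-divisor on the smooth surface $Y$, hence Cartier, as soon as $2\ell d_i\in\mathbb Z$ for every $i$. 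So it suffices to prove $2\ell a_i\in\mathbb Z$ for $i=1,\dots,n$; the same argument with $\ell$ in place of $2\ell$ shows that if $\ell a_i\in\mathbb Z$ for all $i$, then already $\ell(K_X+\Delta)$ is Cartier.

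\emph{The dual graph and the intersection relations.} By the classification of klt surface singularities \cite[3.35]{kollar13}, the dual graph of $E_1+\dots+E_n$ is a chain, a dihedral graph, or one of finitely many ``exceptional'' graphs, each with at most $8$ vertices. In the exceptional case $n=8$ (as $n\ge 8$), so $\{E_1,\dots,E_n\}=\{E_{\sigma(1)},\dots,E_{\sigma(8)}\}$ and the hypothesis already gives $\ell a_i\in\mathbb Z$ for all $i$; hence I may assume the graph is a chain $E_1-\cdots-E_n$, or a dihedral graph with ``chain part'' $C_1-\cdots-C_{n-2}$ and two $(-2)$-curves $C_{n-1},C_n$ attached to $C_{n-2}$ — in that case set $E_i:=C_i$. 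Let $m$ be the number of vertices of the chain part ($m=n$, resp.\ $m=n-2$). For each $E_j$ of valence $\le 2$ in the graph, with chain-neighbours among $E_{j-1},E_{j+1}$, expanding $0=\big(K_Y+f_*^{-1}\Delta+\sum_i d_iE_i\big)\cdot E_j$ and using $K_Y\cdot E_j=-2-E_j^2$, $b_j:=-E_j^2\ge 2$ and $\delta_j:=f_*^{-1}\Delta\cdot E_j\ge 0$, gives
\[
b_jd_j-d_{j-1}-d_{j+1}=b_j-2+\delta_j,\qquad\text{equivalently}\qquad 2d_j-d_{j-1}-d_{j+1}=(b_j-2)a_j+\delta_j\geq 0,
\]
the inequality because $(X,\Delta)$ is klt ($a_j>0$) and $f_*^{-1}\Delta$ has no exceptional component. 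Two consequences: (a) $(a_1,\dots,a_m)$ is a convex sequence along the chain part; (b) multiplying the first identity by $\ell$ and using $\ell\delta_j\in\mathbb Z$, if two of $\ell d_{j-1},\ell d_j,\ell d_{j+1}$ lie in $\mathbb Z$, then so does the third — so if two consecutive vertices of the chain part have $\ell d_\bullet\in\mathbb Z$, then $\ell d_i\in\mathbb Z$ for all $i\le m$.

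\emph{Conclusion.} By (a), the sublevel set $S:=\{\,i\le m : a_i\le a_{\sigma(8)}\,\}$ is an interval of the chain part. Each $E_{\sigma(i)}$ with $1\le i\le 8$ that lies in the chain part belongs to $S$, and at most two of $E_{\sigma(1)},\dots,E_{\sigma(8)}$ fail to lie in the chain part (only the dihedral leaves $C_{n-1},C_n$ can), so $\#S\ge 6$, and in particular $S$ contains two consecutive vertices of the chain part. Since $a_i\le a_{\sigma(8)}$ forces $a_i$ to be one of the values $a_{\sigma(1)},\dots,a_{\sigma(8)}$, we have $\ell a_i\in\mathbb Z$ for each $i\in S$; by (b), $\ell a_i\in\mathbb Z$ — in particular $2\ell a_i\in\mathbb Z$ — for every vertex $E_i$ of the chain part. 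In the dihedral case it remains to treat the two leaves: the relation at $C_{n-1}$ (which has $C_{n-1}^2=-2$ and meets only $C_{n-2}$) reads $2d_{n-1}=d_{n-2}+\delta_{n-1}$, so $2\ell d_{n-1}\in\mathbb Z$, and likewise $2\ell d_n\in\mathbb Z$. Hence $2\ell a_i\in\mathbb Z$ for all $i$, and the reduction completes the proof. The step I expect to be most delicate is this middle one: one must invoke the classification precisely enough to bound the exceptional graphs by $8$ vertices, and — this is the real reason the number $8$ (rather than the a priori index $\det\Gamma_q$) suffices — one must establish the convexity of the log discrepancies along the chain part and then propagate the integrality across the branch vertex of the dihedral graph.
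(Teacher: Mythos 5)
Correct, and essentially the same proof as the paper's: reduce via Lemma~\ref{l_klt_Cartier} to showing $2\ell a_i \in \mathbb{Z}$ for all $i$, use the classification to dismiss the exceptional types (where $n\le 8$ forces $n=8$) and otherwise work with a chain or dihedral graph, establish convexity of the $a_i$ along the chain part to locate two consecutive indices with $\ell a_\bullet \in \mathbb{Z}$, propagate the integrality along the chain, and pick up the extra factor $2$ from the dihedral $(-2)$-tails. One small overclaim in item~(b): knowing $\ell d_{j-1},\ell d_{j+1}\in\mathbb{Z}$ yields only $b_j\ell d_j\in\mathbb{Z}$ (not $\ell d_j\in\mathbb{Z}$ since $b_j\ge 2$), so ``two of the three implies the third'' fails in that one case — but you never invoke that direction, only propagation from two consecutive known values, so the argument is unaffected.
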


\begin{proof}
We have
\[
K_Y+f_*^{-1}\Delta+\sum_{i=1}^n (1-a_i)E_i=f^*(K_X+\Delta).
\]
Since $X$ is klt and $\ell f_*^{-1}\Delta$ is integral, by Lemma \ref{l_klt_Cartier}, it is enough to show that $2\ell a_j \in \mathbb Z$ for any $j=1,\dots, n$. 

We may assume that $x \in X$ is a cyclic or a dihedral singularity, as otherwise, by the classification of two dimensional klt singularities, we have that $n\le 8$ (e.g.\ see \cite[Theorem 4.16]{KM98}), and the claim follows immediately.

We first assume that 
$x\in X$ is a cyclic singularity and $E_1,\dots, E_n$ is a chain of rational curves. 
For each $i \in \{2,\dots,n-1\}$, by taking the intersection with  $E_i$ on both sides of the equality above, 
we obtain 
\[
(-E_i^2)a_i+f_*^{-1}\Delta \cdot E_i=a_{i-1}+a_{i+1}.
\]
In particular, the numbers $a_1,\dots,a_n$ satisfy the following convexity inequality: 
\[
a_i \leq \frac{a_{i-1}+a_{i+1}}{-E_i^2} \leq \frac{a_{i-1}+a_{i+1}}{2}.
\]
Thus, after possibly replacing $\sigma(1)$ by another index $j\in \{1,\dots,n\}$ such that $a_{\sigma(1)}=a_{j}$, 
we may assume that $\sigma(2)$ is equal to $\sigma(1)-1$ or $\sigma(1)+1$, 
say $\sigma(1)+1$. 
In particular, 
$\ell a_{\sigma(1)}, \ell a_{\sigma(1)+1} \in \mathbb Z$.

Thus, for each $i=2,\dots,n-1$, we have
\[
(-E_i^2)\ell a_i+(\ell f_*^{-1}\Delta) \cdot E_i=\ell a_{i-1}+\ell a_{i+1}, 
\]
and it follows inductively that $\ell a_j\in \mathbb Z$ for any $j=1,\dots,n$. Thus,  the claim follows. 
\medskip

Let us assume now that $x\in X$ is a dihedral singularity. 
Let $E_{n-1}$ and $E_n$ be two tails of self-intersection $(-2)$ 
and let $E_1,E_2,\dots,E_{n-2}$ be the remaining chain of rational curves \cite[3.35(3)]{kollar13}, 
so that  $E_{n-2}$ intersects $E_{n-1}$ and $E_n$.  
For $m \in \{n-1, n\}$, by taking the intersection with $E_m$ on both sides of the equality 
\[
K_Y+f_*^{-1}\Delta+\sum_{i=1}^n (1-a_i)E_i=f^*(K_X+\Delta),
\]
we obtain 
\[
2a_m+f_*^{-1}\Delta \cdot E_m=a_{n-2}+1.
\]

Thus, it is enough to show that $\ell a_i\in \mathbb Z$ for any $i=1,\dots,n-2$. 
By  assumption, it follows that 
\[
\ell \min_{1\leq i \leq n-2}a_i,\,\, \ell \min_{\substack{1\leq i \leq n-2\\ i \neq j_1}}a_i \in \mathbb Z
\]
for some $j_1\in \{1,\dots,n-2\}$ such that $a_{j_1}=\min_{1\leq i \leq n-2}a_i$. 
By applying the same argument as above to the chain 
$E_1,E_2,\dots,E_{n-2},$
we obtain that $\ell a_j \in \mathbb Z$ for every $j\in \{1,\dots,n-2\}$. 
Thus, the claim follows. 
\end{proof}

\begin{proposition}\label{p_bounded-index}
Let $I \subseteq (0, 1] \cap \mathbb Q$ be a DCC set. 
Then there exists a positive integer $m_0$ which satisfies the following property:

Let $(X, \Delta)$ be a two dimensional projective log canonical pair such that   
\begin{itemize}
\item{$X$ is klt, }
\item{the coefficients of $\Delta$ are contained in $I$, and }
\item{$K_X+\Delta \equiv 0$. }
\end{itemize}
Then $m_0(K_X+\Delta)$ is Cartier. 
\end{proposition}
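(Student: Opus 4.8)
The plan is to reduce the statement to a local Cartier-index bound and then apply Lemma~\ref{l_klt_index}. First I would observe that the index of $K_X+\Delta$ is a local question: $m(K_X+\Delta)$ is Cartier if and only if it is Cartier at every point, and away from the finitely many singular points of $X$ (which are isolated, since $X$ is a klt surface) there is nothing to prove, provided $m\Delta$ is already a $\mathbb Z$-divisor. Since the coefficients of $\Delta$ lie in the DCC set $I$ and $\dim X \le 2$, the number of components of $\Delta$ passing through any given point is bounded, so after enlarging $I$ to a suitable finite set (using DCC) there is a fixed integer $\ell_0$ with $\ell_0\Delta$ integral; thus it suffices to bound, uniformly, the Cartier index of $K_X+\Delta$ at a single klt point $x\in X$ through which $\Delta$ has boundedly many components with coefficients in a fixed finite set.

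Next I would fix such a point $x$ and run Lemma~\ref{l_klt_index}: with $f\colon Y\to X$ the minimal resolution and $a_i=a(E_i,X,\Delta)$ the log discrepancies of the exceptional curves, the lemma says that if $n\ge 8$ and we can find $\ell$ with $\ell\Delta$ integral and $\ell a_{\sigma(i)}\in\mathbb Z$ for the eight smallest log discrepancies, then $2\ell(K_X+\Delta)$ is Cartier. (If $n\le 7$ one is in the finite list of klt singularity types of bounded index and the discrepancies are explicitly controlled, so that case is handled separately and uniformly.) So the task becomes: bound the denominators of the eight smallest log discrepancies $a_{\sigma(1)},\dots,a_{\sigma(8)}$ at a klt surface point, knowing only that the coefficients of $\Delta$ at $x$ lie in a fixed finite set. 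Here I would invoke the ACC property: the set of log discrepancies $a(E,X,\Delta)$ arising from klt surface pairs with coefficients in a DCC set is itself a DCC subset of $[0,\infty)$ (this follows from ACC for log canonical thresholds, \cite[Theorem 1.1]{alexeev93}, or directly from boundedness of such singularities). A DCC set of rationals accumulating only at $0$ from above — more precisely, any DCC set — has the property that, below any fixed bound, only finitely many values occur with denominator exceeding a given integer; but since the eight smallest $a_{\sigma(i)}$ need not be bounded away from $0$, I must argue more carefully. The cleaner route: the pair $(X,\Delta)$ localised at $x$, being a klt surface germ with boundary coefficients in a fixed finite set $I'$, belongs to a bounded family (this is the two-dimensional case of boundedness of klt singularities with DCC coefficients). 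In a bounded family the Cartier index of $K+\Delta$ is uniformly bounded by an integer $m_1=m_1(I')$. Combining, $m_0:=\mathrm{lcm}(2\ell_0 m_1, \text{indices from the finite list of types with } n\le 7)$ works.

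The main obstacle I anticipate is making precise the reduction to a \emph{bounded} family of klt surface germs and extracting from it a uniform Cartier-index bound — i.e.\ upgrading ``DCC set of log discrepancies'' to an actual denominator bound for the eight smallest discrepancies when these are not bounded away from zero. The honest fix is to use boundedness: one should show that klt surface singularities $(X\ni x,\Delta)$ with coefficients of $\Delta$ in a fixed finite set form a bounded family (for instance via the minimal resolution: the dual graph is a tree of $\mathbb P^1$'s with bounded number of vertices once $n\le 8$ is established as the relevant range by Lemma~\ref{l_klt_index}, and the self-intersections are then constrained by the klt inequalities, giving finitely many dual graphs, whence finitely many analytic types and a single $m_1$). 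Granting that boundedness input — or, alternatively, invoking ACC for minimal log discrepancies of surfaces together with Lemma~\ref{l_klt_index} applied to the finitely many possible tuples — the rest is bookkeeping: glue the local index bound $2\ell_0 m_1$ with the index bounds coming from the finitely many klt singularity types with $n\le 7$, and take a least common multiple to produce the desired $m_0$ depending only on $I$.
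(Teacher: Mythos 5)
Your reduction to Lemma~\ref{l_klt_index} is on the right track for the klt part of the proposition, but there are two genuine gaps, one of omission and one of substance.

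First, the proposition does not assume $(X,\Delta)$ is klt, only that $X$ is klt; since $I\subseteq(0,1]$, the divisor $\Delta$ may have coefficient-one components and the non-klt locus of $(X,\Delta)$ is in general a curve. Lemma~\ref{l_klt_index} requires the pair $(X,\Delta)$ to be klt at the point in question, so your argument silently restricts to klt points and says nothing about points $q$ where $(X,\Delta)$ is log canonical but not klt. The paper treats those points separately: after a dlt modification one reduces to a plt point $q\in C=\lfloor\Delta\rfloor$, writes $K_C+\Delta_C=(K_X+\Delta)|_C\equiv 0$ by adjunction, observes the coefficients of $\Delta_C$ lie in the DCC set $D(I)$, applies Lemma~\ref{l_acc.num} on $C$ to get finitely many possible coefficients of $\Diff_C$ at $q$, and then reads off the index from Lemma~\ref{l_diff}. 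This strand is missing entirely from your proposal.

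Second, and more seriously, the ``boundedness of klt surface germs'' you invoke to bound the denominators of $a_1,\dots,a_8$ is false as a purely local statement: the cyclic quotient germs $\tfrac{1}{n}(1,1)$ (cone over a rational normal curve of degree $n$, a single $(-n)$-curve in the minimal resolution) are klt with $\Delta=0$ for all $n\ge 2$ and have Cartier index $n$, so there is no uniform $m_1$ for klt surface germs even with empty boundary. What actually gives a bound is the \emph{global} hypothesis $K_X+\Delta\equiv 0$, and this has to be fed into the argument at each step, not just used once to reduce $I$ to a finite set. The paper does this by an iterative extraction: extract a divisor $E_1$ computing the mld, so that $K_Z+g_*^{-1}\Delta+(1-a_1)E_1=g^*(K_X+\Delta)\equiv 0$ is again a numerically trivial lc pair; ACC for mld (\cite[Theorem 3.2]{alexeev93}) shows $\{1-a_1\}$ is DCC, and then ACC for log Calabi--Yau pairs (Lemma~\ref{l_acc.num}) applied to $(Z,g_*^{-1}\Delta+(1-a_1)E_1)$ pins $a_1$ down to a finite set. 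Repeating eight times handles $a_1,\dots,a_8$. Your offhand alternative ``invoking ACC for mld together with Lemma~\ref{l_klt_index}'' gestures at this but omits the crucial step of re-applying the numerical-triviality-plus-ACC constraint to the extracted pair; without that, ACC for mld alone does not yield a finite set of possible discrepancies. Your dual-graph sketch also misreads Lemma~\ref{l_klt_index}: it does not cap the number $n$ of exceptional curves at $8$ (that number is unbounded), it only reduces the denominator bound to the eight smallest discrepancies, which is exactly the quantity the global argument must control.
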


\begin{proof}
By Lemma \ref{l_acc.num}, we may assume that $I$ is a finite set. Further, we may assume that $1 \in I$. Let $(X,\Delta)$ be a log pair as in the Proposition. 

First, we show that there exists  $m_1 \in \mathbb Z_{>0}$, depending only on $I$, such that 
$m_1(K_X+\Delta)$ is Cartier around any closed point $q\in X$ such that
$(X, \Delta)$ is not klt at $q$. 
By Lemma \ref{l_klt_Cartier}, after taking a dlt modification, we may assume that $(X, \Delta)$ is dlt at $q$. 
If  the support of $\rdown \Delta .$ is singular at $q$, 
then $X$ is smooth at $q$ \cite[Theorem~4.15(1)]{KM98}, and there is nothing to show. 
Thus, we may assume that $(X, \Delta)$ is plt around $q$. Let $C$ be the component of $\lfloor \Delta \rfloor$ containing $q$. Since $(X,\Delta)$ is plt around $q$, we know that $C$ is smooth, and so we may write  
\[
K_C+ \Delta_C=(K_X+\Delta)|_C\equiv 0, 
\]
where, by inversion of adjunction,  $\Delta_C$ is a $\mathbb Q$-divisor on $C$ such that $(C,\Delta_C)$ is klt. By \cite[Lemma 4.3]{mp04}, $\Delta_C$ has coefficients in $D(I)$. Since $D(I)$ is a DCC set \cite[Lemma 4.4]{mp04} and $\Diff_C\le \Delta_C$, 
 Lemma \ref{l_acc.num} implies that  
there are only finitely many possibilities for the coefficients of $\Diff_C$ at $q$, 
and the existence of $m_1$ follows from Lemma \ref{l_diff}.

Now we show that there exists $m_2 \in \mathbb Z_{>0}$, depending only on $I$, such that 
$m_2(K_X+\Delta)$ is Cartier around  any closed point $q \in X$ such that $(X, \Delta)$ is klt at $q$. 
Let $f\colon Y\to X$ be the minimal resolution at $q$. We have
\[
K_Y+f_*^{-1}\Delta+\sum_{i=1}^n(1-a_i)E_i
=f^*(K_X+\Delta)\]
where $E_1,\dots,E_n$ are the $f$-exceptional prime divisors, ordered so that
\[
a_1 \leq a_2 \leq \cdots \leq a_n.
\]
By Lemma~\ref{l_klt_index}, if $\ell$ is a positive integer such that 
\[
\ell a_1, \ell a_2, \ldots, \ell a_8 \in \mathbb Z
\]
and such that $\ell \Delta$ is a $\mathbb Z$-divisor, then $2\ell (K_X+\Delta)$ is Cartier around $q$. 
Thus, it is enough to find $n_0 \in \mathbb Z_{>0}$, depending only on $I$,   such that 
$n_0a_1, \dots, n_0a_8\in \mathbb Z$. 

To this end, we first extract a prime divisor $E_1$ such that  $a(E_1, X, \Delta)=a_1$ and we obtain a  birational morphism 
$g\colon Z \to X$
such that 
\[ K_Z+g_*^{-1}\Delta+(1-a_1)E_1=g^*(K_X+\Delta)\equiv 0.
\]
By ACC for the minimal log discrepancy \cite[Theorem 3.2]{alexeev93}, 
the set  $\{a_1\}_{(X, \Delta), x \in X}$ is an ACC set, 
hence $\{1-a_1\}$ is a DCC set. 
By Lemma \ref{l_acc.num}, there are finitely many possibilities for $a_1$. 
Then, we extract $E_2$ with $a(E_2, X, \Delta)=a_2$, and apply the same argument, to show that there are only finitely many possibilities for $a_2$. 
Repeating the same argument eight times, 
we see that there are finitely many possibilities for $a_1, \ldots, a_8$.
Thus, we may find a  positive integer $n_0$ as above and the claim follows. 
\end{proof}

The following result is well known at least in characteristic zero  \cite[Lemma 8.3.15]{prokhorov01}. 
We include the proof for the sake of completeness.

\begin{lemma} \label{l_invar}
Let $(Z, C+B)$ be a two dimensional $\mathbb Q$-factorial log canonical pair, 
where $C$ is a prime divisor and $B$ is an effective $\mathbb Q$-divisor.
Let $\pi \colon Z \to T$ be a projective morphism onto a smooth curve $T$ such that $\pi_*\mathcal O_Z=\mathcal O_T$. 
Assume that 
\begin{itemize}
\item{$C$ is $\pi$-horizontal,}
\item{$K_Z+C+B \equiv_{\pi} 0$, and}
\item{$\rho(Z/T)=1$.}
\end{itemize}
Then the following  hold: 
\begin{enumerate}
\item{For every closed point $t \in T$, $\pi^{-1}(t)_{\mathrm{red}}$ is isomorphic to $\mathbb P^1$.}
\item{$[K(C):K(T)] \leq 2$.}
\item{Assume that $C$ is normal, and let us  define 
 $K_C+B_C=(K_Z+C+B)|_C$ by adjunction. 
If $[K(C):K(T)] = 2$ and the field extension $K(C)/K(T)$ is separable, 
then $B_C$ is invariant under the action of the Galois group $\mathrm{Gal}(K(C)/K(T))$.}
\end{enumerate}
\end{lemma}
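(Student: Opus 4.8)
The plan is to analyze the three claims in turn, with claims (1) and (2) being essentially a structural study of the Mori fibre space $\pi$, and claim (3) a Galois-descent argument for the different.

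\emph{Claims (1) and (2).} First I would observe that since $\rho(Z/T)=1$ and $-(K_Z+C+B)\equiv_\pi 0$ with $C$ effective $\pi$-horizontal, the divisor $-K_Z$ is $\pi$-ample; hence $\pi$ is a $K_Z$-Mori fibre space over the curve $T$, and the generic fibre $Z_\eta$ is a conic over $K(T)$ (a Fano curve of Picard number one over the function field). For a closed point $t\in T$, the reduced fibre $F=\pi^{-1}(t)_{\mathrm{red}}$ is then a curve with $-K_Z\cdot F>0$; combining $K_Z+C+B\equiv_\pi 0$ with $C,B\geq 0$ and adjunction/the classification of fibres of such a fibration, $F$ must be a smooth rational curve (one checks $p_a(F)=0$ using $K_Z\cdot F<0$ and that $F$ moves in a base-point-free pencil, or one invokes directly \cite[Proposition 11.7]{birkar13} / the structure theory already used in Lemma~\ref{l_acc.num}), giving (1). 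For (2), $C$ being $\pi$-horizontal means $C_\eta$ is a closed point of the conic $Z_\eta$ over $K(T)$, so $[K(C):K(T)] = \deg(C_\eta) \le \deg$ of the conic's hyperplane, which is $2$; more concretely $C_\eta \hookrightarrow Z_\eta \subset \mathbb P^2_{K(T)}$ as a point cut out by a linear form restricted to a conic, so its residue field has degree at most $2$ over $K(T)$. Alternatively one restricts to the generic fibre and uses that $-(K_{Z_\eta}) = \mathcal O(1)$ or $\mathcal O(2)$ on $\mathbb P^1_{K(T)}$ has a section of degree $\le 2$.

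\emph{Claim (3).} Assume $[K(C):K(T)]=2$ with $K(C)/K(T)$ separable, so that $\mathrm{Gal}(K(C)/K(T))=\{1,\tau\}$ acts on $C$ over $T$ (the action is defined on the generic point, hence, since $C$ is normal and proper over $T$, it extends to a birational, and then — both sides being normal proper curves over $T$ — biregular involution $\tau\colon C\to C$). The goal is to show $\tau^*B_C = B_C$. The idea is that $B_C$ is intrinsically determined by data on $Z$ that is symmetric under the deck transformation. Concretely, I would argue fibrewise over the closed points of $T$: for $t\in T$ with $\pi^{-1}(t)_{\mathrm{red}}\cong \mathbb P^1$ by (1), the two points (or one point, in the ramified/inseparable-free case) of $C$ above $t$ are interchanged by $\tau$, and the coefficient of $B_C$ at a point $c\in C$ above $t$ is computed, via the adjunction formula $(K_Z+C+B)|_C = K_C+B_C$ and the definition of $\Diff$, purely in terms of: the coefficients of $B$ along the horizontal components through $c$ (which are $\tau$-symmetric since $B$ and $C$ are pulled back from / symmetric over $T$ on the generic fibre — here one uses that over a neighbourhood of $\eta$ the divisor $B$ restricted is Galois-invariant because $B_\eta$ on the conic $Z_\eta$, together with $C_\eta$, is cut out by $K(T)$-rational data), plus the local analytic type of the singularity of $(Z,C+B)$ at $c$, which is again the same at $c$ and $\tau(c)$ because $\tau$ extends to a local isomorphism of $Z$ near the fibre (the fibre being $\mathbb P^1$ with $\rho(Z/T)=1$ forces the total space near $\pi^{-1}(t)$ to have a symmetry exchanging the two branches of $C$). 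I would make this precise by passing to the generic fibre: $K_{Z_\eta}+C_\eta+B_\eta = 0$ (a log Calabi--Yau conic over $K(T)$), the involution $\tau$ of $C_\eta/\mathrm{Spec}\,K(T)$ is induced by an automorphism of $(Z_\eta, C_\eta+B_\eta)$ — indeed any conic with a degree-$2$ point has such an involution fixing the hyperplane class, hence fixing $B_\eta$ as the residual divisor — and adjunction commutes with this automorphism, so $\tau^* B_{C_\eta} = B_{C_\eta}$; then $B_C$ is the closure of $B_{C_\eta}$ plus vertical components, and the vertical components of $B_C$ are $\tau$-invariant by the fibrewise analysis above (the different along each fibre is symmetric since $\tau$ swaps the preimages in $C$ of a closed point of $T$).

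\emph{Main obstacle.} The delicate point is Claim (3): justifying that the involution $\tau$ of $C$ over $T$ is induced by (or at least compatible with) an automorphism of a neighbourhood of $C$ in $Z$ respecting $B$, i.e.\ that the \emph{local} structure of $(Z, C+B)$ along $C$ is Galois-symmetric, not just the generic fibre. The clean way around it is to do everything on the generic fibre $Z_\eta$ — where $(Z_\eta, C_\eta+B_\eta)$ is literally a conic with a log-Calabi--Yau boundary and the involution is transparent — deduce invariance of $B_{C_\eta}$ there, and then handle the finitely many vertical components of $B_C$ separately by the fibre-by-fibre symmetry coming from (1). I expect the bookkeeping in matching $\Diff_C$ with $\Diff_{C_\eta}$ plus vertical corrections, and checking separability is genuinely used only to make $\mathrm{Gal}(K(C)/K(T))$ act, to be the main technical content; everything else is standard MMP/conic-bundle structure theory already invoked earlier in the paper.
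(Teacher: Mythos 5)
Your treatment of (1) and (2) is essentially the paper's: observe that $\pi$ is a $K_Z$-Mori fibre space, so the generic fibre is a conic of anticanonical degree $2$, and $C$ meets it in a point of degree at most $2$; the paper phrases this via $(K_Z+B)\cdot G\ge -2$ on a general fibre $G$ and cites \cite[Theorem 3.19]{tanaka12} for (1), but the content is the same.

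For (3) there is a real gap, and it is precisely the spot you flag as ``the main obstacle.'' Your proposed resolution --- passing to the generic fibre and then handling vertical components by ``fibre-by-fibre symmetry'' --- does not actually close it. Once you note (as the paper does, from $0=(K_Z+C+B)\cdot G\ge -2+2+B\cdot G$) that $B$ is $\pi$-vertical, the generic fibre carries no information: $B_\eta=0$, hence $B_{C_\eta}=0$, and the entire content of $B_C$ is concentrated over finitely many closed points of $T$. Your claim that ``the fibre being $\mathbb P^1$ with $\rho(Z/T)=1$ forces the total space near $\pi^{-1}(t)$ to have a symmetry exchanging the two branches of $C$'' is unjustified: there is no a priori reason the singularity of $Z$ at the point $Q_1\in C\cap\pi^{-1}(Q)$ should be analytically isomorphic to the one at $Q_2=\tau(Q_1)$, and asserting such a symmetry is essentially asserting what must be proved. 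The paper sidesteps extending $\tau$ to $Z$ entirely. Having reduced to $B=0$ (so $B_C=\Diff_C$; the $B|_C$ part is trivially invariant because each $\pi^*(t)|_C$ is), one fixes $Q\in T$ with two preimages $Q_1,Q_2$ on $C$, writes $\pi^*Q=mF$, and shows by a degree count on $F\simeq\mathbb P^1$ that $\Diff_F+C|_F=Q_1+Q_2$; inversion of adjunction then forces $(Z,C+F)$ to be log canonical but not plt at $Q_1,Q_2$. Since $\pi|_C$ is \'etale over $Q$, $(K_Z+C+F)|_C=K_C+\Diff_C+\tfrac1m(Q_1+Q_2)$, and inversion of adjunction a second time pins down the coefficient of $\Diff_C$ at \emph{each} of $Q_1,Q_2$ to be exactly $1-\tfrac1m$. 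The symmetry you wanted is thus a consequence of an explicit global computation, not of a local isomorphism of $Z$; this computation is the missing idea in your proposal.
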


\begin{proof}
Let $F:=\pi^{-1}(t)_{\mathrm{red}}$. 
Note that $F$ is irreducible, because $Z$ is $\mathbb Q$-factorial and $\rho(Z/T)=1$. 
Since $C$ is $\pi$-horizontal, we obtain 
\[
(K_Z+F) \cdot F=K_Z \cdot F<(K_Z+C+B) \cdot F=0.
\]
Thus,  \cite[Theorem 3.19(1)]{tanaka12} implies (1). 

Let $G$ be a general fibre of $\pi$. 
Then  $G$ is integral \cite[Corollary 7.3]{badescu01}. Thus, $G \simeq \mathbb P^1$ by (1). 
In particular, it follows that 
\[
(K_Z+B) \cdot G \geq K_Z \cdot G=(K_Z+G) \cdot G=\deg(K_G+\Diff_G) \geq -2,
\]
which implies $C \cdot G=-(K_Z+B) \cdot G \leq 2.$ 
Thus, (2) holds. 

We now prove (3). 
Note  that $B$ is $\pi$-vertical, since  
\[
0=(K_Z+C+B) \cdot G \geq -2+2+B \cdot G=B \cdot G,
\]
where $G$ is a general fibre of $\pi$. 
Since $\rho(Z/T)=1$, we obtain $K_Z+C \equiv_{\pi} 0$. 
Since the divisor $\pi^*(t)|_C$ is $\mathrm{Gal}(K(C)/K(T))$-invariant for every closed point $t \in T$, 
so is $B|_C$.  In particular, we may assume $B=0$ and $B_C=\Diff_C$. 

Let $Q \in T$ be such that $\pi^{-1}(Q) \cap C$ consists of two distinct points $Q_1, Q_2$. 
It is enough to show that the coefficients of $\Diff_C$ at the points $Q_1$ and $Q_2$ coincide. 
We may write $\pi^*Q=mF$ where $F$ is a prime divisor and $m$ is a positive integer. 
By (1), $F \simeq \mathbb P^1$. 

We show that 
the pair $(Z, C+F)$ is log canonical. 
We have
\[
0\sim_{\mathbb Q}(K_Z+C+F)|_F=K_F+{\Diff}_F+C|_F.
\]
Since $(Z,C+F)$ is not plt at the points $Q_1$ and $Q_2$, inversion of adjunction implies that the pair $(F,{\Diff}_F+C|_F)$ is not klt at the points $Q_1$ and $Q_2$. 
Thus, 
\[
\Diff_F+C|_F\ge Q_1+Q_2,
\]
and since $\deg (\Diff_F+C|_F)=\deg (-K_F)=2$, equality holds. Thus,  inversion of adjunction implies that $(Z,C+F)$ is log canonical. 

Since $\pi|_C\colon C \to T$ is \'etale over $Q$, 
we have $mF|_C=\pi^*Q|_C=Q_1+Q_2$ and 
\[
(K_Z+C+F)|_C=K_C+\Diff_C +\frac 1 m (Q_1+Q_2).
\]
In particular, by inversion of adjunction again, the coefficients of $\Diff_C$ at $Q_1$ and $Q_2$ are equal to $1-\frac{1}{m}$, and the claim follows. 
\end{proof}


\subsection{Global F-adjunction} \label{subsection_global-f-adjunction}

We now summarise some known results on $F$-adjunction which we will use in this subsection. 
Let $X$ be a normal variety, $S$ a prime divisor and $B$ an effective $\mathbb Q$-divisor on $X$ 
such that $S \not\subseteq \Supp B$. 
Let $\nu\colon S' \to S$ be the normalisation. 
Assume that $(p^e-1)(K_X+S+B)$ is Cartier for some $e \in \mathbb Z_{>0}$ (this is equivalent to saying that the Cartier index of $K_X+S+B$ is not divisible by $p$).
By adjunction, we may write 
\[
\nu^*((K_X+S+B)|_S)=K_{S'}+B_{S'}
\]
 (cf.\ Subsection~\ref{s_diff}). 
There are natural homomorphisms \cite[discussion below Proposition 2.5]{schwede14}
\[
\phi^X_{S+B}:F_*^e\mathcal O_X(-(p^e-1)(K_X+S+B)) \to \mathcal O_X,
\]
and
\[
\phi^{S'}_{B_{S'}}:F_*^e\mathcal O_{S'}(-(p^e-1)(K_{S'}+B_{S'})) \to \mathcal O_{S'}.
\]
Let 
\[
\mathcal L:=\mathcal O_X(-(p^e-1)(K_X+S+B)).
\]
Since $S$ is an $F$-pure centre of $(X, S+B)$ [ibid, Definition 5.1], 
we obtain a commutative diagram [ibid, Remark 5.2]: 
\[
\begin{CD}
0 @>>> F^e_*(\mathcal L\otimes_{\mathcal O_X} \mathcal O_X(-S)) @>>> F^e_*\mathcal L @>>> F^e_*(\mathcal L|_S) @>>> 0\\
@. @VVV @VV\phi^X_{S+B} V @VV\psi V\\
0 @>>> \mathcal O_X(-S) @>>> \mathcal O_X @>>> \mathcal O_S @>>> 0.
\end{CD}
\]
We also obtain the following commutative diagram \cite[Lemma 8.1]{schwede09}: 
\[
\begin{CD}
F^e_*(\mathcal L|_S) @>>> F^e_*(\nu^*(\mathcal L|_S))\\
@VV\psi V @VV\phi^{S'}_{B_{S'}} V\\
\mathcal O_S @>>> \mathcal O_{S'}.
\end{CD}
\]
Note that the right vertical arrow coincides with $\phi^{S'}_{B_{S'}}$ by \cite[Theorem 5.3]{das15}.

\medskip

Under some additional assumptions, we will show that if $-(K_X + S + B)$ is nef and $(S',B_{S'})$ is globally $F$-split, then  $(X,S+B)$ is globally $F$-split as well. 
The following three lemmas correspond to the  cases  
\[
\kappa(X, -(K_X+S+B))=2,1\text{ and }0
\]
respectively. 

\begin{lemma} \label{l_gladj}Let $(X,S+B)$ be a projective log pair 
where $S$ is a prime divisor and $B$ is an effective $\mathbb Q$-divisor such that 
$S \not\subseteq \Supp B$. Assume that 
\begin{enumerate}
\item  $-(K_X + S + B)$ is ample,
	\item $S$ is normal and if $K_S+B_S=(K_X+S+B)|_S$ is defined by adjunction, 
then $(S, B_S)$ is globally $F$-split, and
	\item $(p^e-1)(K_X + S + B)$ is Cartier for some positive integer $e$.
\end{enumerate}
Then $(X,S+B)$ is globally $F$-split.
\end{lemma}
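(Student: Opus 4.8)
The goal is to produce a splitting of the $e$-th Frobenius for $(X, S+B)$, that is, an $\mathcal{O}_X$-linear map $F^e_* \mathcal{O}_X(-(p^e-1)(K_X+S+B)) \to \mathcal{O}_X$ sending the section corresponding to $1$ to $1$. The strategy is to build such a map by lifting a splitting that we already have on $S$ through the two commutative diagrams of $F$-adjunction recalled just before the statement. First I would set $\mathcal{L} := \mathcal{O}_X(-(p^e-1)(K_X+S+B))$ and chase the diagram
\[
\begin{CD}
0 @>>> F^e_*(\mathcal L\otimes \mathcal O_X(-S)) @>>> F^e_*\mathcal L @>>> F^e_*(\mathcal L|_S) @>>> 0\\
@. @VVV @VV\phi^X_{S+B} V @VV\psi V\\
0 @>>> \mathcal O_X(-S) @>>> \mathcal O_X @>>> \mathcal O_S @>>> 0
\end{CD}
\]
together with its composition with $F^e_*(\mathcal{L}|_S) \to F^e_*(\nu^*(\mathcal{L}|_S))$ and $\phi^{S'}_{B_{S'}}$. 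By hypothesis $(S,B_S)$ is globally $F$-split; since $S$ is normal with normalisation $S' = S$, this gives a surjection $H^0(S, F^e_*(\mathcal{L}|_S)) \twoheadrightarrow H^0(S,\mathcal{O}_S)$, i.e. the global map obtained from $\phi^{S'}_{B_{S'}} \circ (\text{restriction})$ hits $1 \in H^0(S,\mathcal{O}_S)$.

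The key step is then to lift a section witnessing that splitting on $S$ to a section of $F^e_*\mathcal{L}$ on $X$, and for this I would take global sections of the first diagram and use a vanishing argument. Twisting the short exact sequence $0 \to F^e_*(\mathcal{L} \otimes \mathcal{O}_X(-S)) \to F^e_*\mathcal{L} \to F^e_*(\mathcal{L}|_S) \to 0$ appropriately, surjectivity on $H^0$ follows from $H^1(X, F^e_*(\mathcal{L}\otimes\mathcal{O}_X(-S))) = 0$, which (since $F^e_*$ is exact and affine) is the same as $H^1(X, \mathcal{L}\otimes \mathcal{O}_X(-S)) = 0$. Now $\mathcal{L}\otimes\mathcal{O}_X(-S) = \mathcal{O}_X(-(p^e-1)(K_X+S+B) - S)$; using hypothesis (1) that $-(K_X+S+B)$ is ample, this sheaf is a (rational multiple of) $K_X$ plus an ample $\mathbb{Q}$-divisor — more precisely one writes the exponent as $K_X + (\text{something involving } -(p^e-1)(K_X+S+B))$ — and one invokes Kawamata–Viehweg vanishing for surfaces (valid without characteristic assumptions, e.g. the surface version used elsewhere in the paper) to kill the $H^1$. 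One must be slightly careful that $X$ need not be a surface in the stated generality; but in the application $X$ is a surface, and in any case (1)+(3) force $-(K_X+S+B)$ to be ample $\mathbb{Q}$-Cartier of index prime to $p$, so an appropriate Serre-type or Fujita-type vanishing in large characteristic, or the assumption that we are in the surface setting of the paper, suffices. Granting the vanishing, we obtain $s \in H^0(X, F^e_*\mathcal{L})$ mapping to a section on $S$ whose image under $\psi$ composed down to $\mathcal{O}_{S'}=\mathcal{O}_S$ is $1$.

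Finally I would push $s$ through $\phi^X_{S+B}$ to get $t := \phi^X_{S+B}(s) \in H^0(X,\mathcal{O}_X) = k$, and check that $t = 1$ — or rather, that $t$ restricts to $1$ on $S$ via $\mathcal{O}_X \to \mathcal{O}_S$, which by the commutativity of the diagrams is exactly the statement that $\psi$ of the restricted section is $1$; since $X$ is connected (indeed integral) and the restriction $k = H^0(X,\mathcal{O}_X) \to H^0(S,\mathcal{O}_S) = k$ is an isomorphism, $t=1$. Then $t$ being a unit (it is $1$) means $\phi^X_{S+B}$ is surjective on global sections hitting $1$, which is precisely the definition of $(X, S+B)$ being globally $F$-split. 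The main obstacle I anticipate is making the vanishing $H^1(X, \mathcal{L}\otimes\mathcal{O}_X(-S))=0$ rigorous in the precise generality of the hypotheses — identifying the twist as "$K_X$ plus ample" and citing the correct vanishing theorem available in this characteristic-$p$ surface context — together with the bookkeeping that the section $s$ produced really does map to $1$ and not merely to a nonzero constant; the latter is handled by the $S$-connectedness/normality remark, but it needs to be stated cleanly.
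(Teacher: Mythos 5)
Your overall strategy is the same as the paper's: set $\mathcal{L} = \mathcal{O}_X(-(p^e-1)(K_X+S+B))$, take global sections of the $F$-adjunction diagram, use hypothesis (2) for surjectivity of $\phi^S_{B_S}$ on $H^0(S,\mathcal{O}_S)$, and lift via a vanishing $H^1(X, \mathcal{L}(-S))=0$. The diagram chase and the remark that $H^0(X,\mathcal{O}_X)\to H^0(S,\mathcal{O}_S)$ is an isomorphism (so that hitting a nonzero constant really means hitting $1$) are correct and match the paper.

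However, the way you propose to obtain the vanishing $H^1(X,\mathcal{L}(-S))=0$ is a genuine gap. Your first suggestion is Kawamata--Viehweg vanishing on surfaces, but global Kawamata--Viehweg vanishing on surfaces is precisely the kind of statement that is known to \emph{fail} in characteristic $p$ and that this very paper is trying to establish under extra hypotheses; it cannot be invoked here. (Moreover, even over $\mathbb{C}$, the relevant divisor $-(p^e-1)(K_X+S+B)-S-K_X = -p^e(K_X+S+B)+B$ is ample plus effective, hence big but not obviously nef, so KV vanishing would not apply directly anyway.) Your fall-back suggestions of ``Serre-type or Fujita-type vanishing in large characteristic'' or ``the surface setting'' also do not work as stated: Lemma~\ref{l_gladj} carries no assumption on the dimension or on the size of $p$, and there is nothing about fixing $e$ that makes $\mathcal{L}(-S)$ sufficiently positive for Serre vanishing.

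The missing idea is to \emph{replace $e$ by a large multiple $me$ before applying Serre vanishing}. This is legitimate because $(p^e-1)\mid(p^{me}-1)$, so $(p^{me}-1)(K_X+S+B)$ remains Cartier, and the trace maps are compatible under composition of Frobenii, so $\phi^S_{B_S}$ remains surjective on global sections when $e$ is replaced by $me$. For $m\gg 0$ the divisor $-(p^{me}-1)(K_X+S+B)-S$ is a large multiple of the ample divisor $-(K_X+S+B)$ minus the fixed divisor $S$, so ordinary Serre vanishing (valid in any characteristic and any dimension) gives $H^1(X,\mathcal{L}(-S))=0$. This is exactly the step the paper's proof carries out and the step your argument needs.
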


\begin{proof}

Let $\mathcal{L} = \mathcal O_X( -(p^e-1)(K_X + S + B))$. We have the following diagram:
\[
\begin{CD}
H^0(X, F_*^e\mathcal L) @>>> H^0(S, F_*^e\mathcal L|_{S}) @>>> H^1(X, F^e_*(\mathcal{L}(-S)))\\
@VV \phi^X_{S+B} V @VV \phi^S_{B_S} V\\
H^0(X, \mathcal{O}_X) @> \simeq >> H^0(S, \mathcal{O}_{S}).
\end{CD}
\]
 (2) implies that the right vertical arrow $\phi^S_{B_S}$ is surjective. 
 By   Serre vanishing, after possibly replacing $e$ by a larger multiple,  (1) implies that $H^1(X, F^e_*(\mathcal{L}(-S))) = 0$. By a diagram chase, it follows that $\phi^X_{S+B}$ is surjective. 
Thus,  $(X,S+B)$ is globally $F$-split. 
\end{proof}

\begin{lemma}\label{lem-MFS}
Assume that $\mathrm{char}\,k>2$. 
Let $(Z,C+B)$ be a two dimensional projective $\mathbb Q$-factorial log canonical pair, 
where $C$ is a prime divisor and $B$ is an effective $\mathbb Q$-divisor.  Let $\pi\colon Z\to T$ be a morphism onto a smooth projective curve $T$ such that  $\pi_*\mathcal{O}_Z=\mathcal{O}_T$. Assume that 
\begin{enumerate}
\item $C$ is $\pi$-horizontal, 
\item $-(K_Z+C+B) \equiv \pi^*A$ for some ample $\mathbb{Q}$-divisor $A$ on $T$,
\item $C$ is normal and if $K_C+B_C=(K_Z+C+B)|_C$ is defined by adjunction, 
then $(C, B_C)$ is globally $F$-split, 
\item $(p^e-1)(K_Z + C + B)$ is Cartier for some positive integer $e$, and
\item $\rho(Z/T)=1$.
\end{enumerate}
Then $(Z,C+B)$ is globally $F$-split.  
\end{lemma}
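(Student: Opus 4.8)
The plan is to follow the scheme of Lemma~\ref{l_gladj}, but since $-(K_Z+C+B)$ is here only nef (being $\mathbb Q$-linearly the pullback of an ample divisor on $T$) rather than ample, Serre vanishing on $Z$ is unavailable and must be replaced by a reduction to the base curve $T$ via $\pi$, using the conic bundle structure furnished by Lemma~\ref{l_invar}. Fix $e$ with $(p^e-1)(K_Z+C+B)$ Cartier (hypothesis (4)) and set $\mathcal L:=\mathcal O_Z(-(p^e-1)(K_Z+C+B))$, so that $\mathcal L|_C\simeq\mathcal O_C(-(p^e-1)(K_C+B_C))$ by adjunction, $C$ being normal by (3). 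As in Lemma~\ref{l_gladj}, it suffices to prove that $H^0(\phi^Z_{C+B})\colon H^0(Z,F^e_*\mathcal L)\to H^0(Z,\mathcal O_Z)=k$ is surjective; chasing the two $F$-adjunction diagrams recalled above, this amounts to finding a section $\bar s\in H^0(C,F^e_*(\mathcal L|_C))$ with $\phi^C_{B_C}(\bar s)\ne 0$ — such a $\bar s$ exists because $(C,B_C)$ is globally $F$-split by (3) — which lies in the image of the restriction $H^0(Z,F^e_*\mathcal L)\to H^0(C,F^e_*(\mathcal L|_C))$, the obstruction to lifting a given $\bar s$ being a class in $H^1(Z,F^e_*(\mathcal L(-C)))$.

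Next I would transport this obstruction down to $T$. The morphism $\pi$ is flat with one-dimensional Cohen--Macaulay fibres ($Z$ is a normal, hence Cohen--Macaulay, surface and $T$ is a smooth curve); by (5) every fibre is irreducible, and by Lemma~\ref{l_invar}(1) its reduction is $\mathbb P^1$, while $C\cdot G\le 2$ for a general fibre $G$ by Lemma~\ref{l_invar}(2). Hence $\mathcal L$ has fibre degree $0$ and $\mathcal L(-C)$ has fibre degree $-(C\cdot G)<0$, so $R^0\pi_*(\mathcal L(-C))=0$; since $R^i\pi_*$ commutes with Frobenius pushforward and $T$ is a curve, the Leray spectral sequence gives $H^1(Z,F^e_*(\mathcal L(-C)))\simeq H^0(T,F^e_*R^1\pi_*(\mathcal L(-C)))$, and we are reduced to understanding the coherent sheaf $R^1\pi_*(\mathcal L(-C))$ on $T$.

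Now I would split on $d:=C\cdot G\in\{1,2\}$. If $d=1$, then $\pi|_C\colon C\to T$ is a birational morphism of normal curves, hence an isomorphism, so $C$ is a section of $\pi$; the general fibre of $\mathcal L(-C)$ is $\mathcal O_{\mathbb P^1}(-1)$ and $\mathcal L(-C)^{\vee}\otimes\omega_{Z/T}$ has negative fibre degree, so by relative duality $R^1\pi_*(\mathcal L(-C))$ is torsion free of generic rank $0$, i.e.\ zero, and every $\bar s$ lifts. If $d=2$, then $\pi|_C\colon C\to T$ has degree $2$; as $\mathrm{char}\,k>2$ this cover is automatically separable, hence Galois, so by Lemma~\ref{l_invar}(3) the divisor $B_C$ — and with it $K_C+B_C$ and the map $\phi^C_{B_C}$ — is invariant under the involution $\iota$ generating $\mathrm{Gal}(K(C)/K(T))$. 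Averaging over $\{1,\iota\}$ ($2$ being invertible), and using the $\iota$-equivariance of $\phi^C_{B_C}$ together with the fact that $\iota$ acts trivially on $H^0(\mathcal O_C)=k$, we may take $\bar s$ to be $\iota$-invariant, i.e.\ $\bar s\in H^0(T,\mathcal N^{+})$, where $\mathcal N^{+}\subseteq\nu_*(\mathcal L|_C)$ is the invariant summand ($\nu:=\pi|_C$). On the other hand a section of $\mathcal L$ constant on a general fibre restricts to $C$ with equal values at the two points over $t$, so the inclusion $\pi_*\mathcal L\hookrightarrow\nu_*(\mathcal L|_C)$ in fact lands in $\mathcal N^{+}$, with torsion cokernel $\mathcal Q$ supported at the special fibres; it then remains to see that $\bar s$ has vanishing image in $H^0(T,\mathcal Q)$ — equivalently, that $\pi_*\mathcal L\to\mathcal N^{+}$ is an isomorphism near each special fibre.

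The main obstacle, I expect, is precisely this last point: a local analysis of the (possibly singular, possibly non-reduced) fibres of the conic bundle $\pi$ in the case $d=2$, showing that the $\iota$-invariant part of the restriction $H^0(\pi^{-1}(t),\mathcal L)\to H^0((\pi|_C)^{-1}(t),\mathcal L|_C)$ is surjective, i.e.\ that $\mathcal Q=0$ (or else a variant of the argument above that sidesteps it). Everything else is a formal $F$-adjunction diagram chase, the Leray spectral sequence, and relative duality on $\pi$; if needed, one absorbs auxiliary positivity on the curve $T$ by enlarging $e$ and invoking Serre vanishing on $T$, which is harmless.
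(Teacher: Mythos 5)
Your strategy is close in spirit to the paper's — set up the two $F$-adjunction squares, observe that $(C,B_C)$ being globally $F$-split produces a section $\bar s$ with $\phi^C_{B_C}(\bar s)=1$, invoke Lemma~\ref{l_invar}(3) to make $\phi^C_{B_C}$ equivariant under the Galois involution $\iota$, and average $\bar s$ over $G=\mathrm{Gal}(K(C)/K(T))$ (using $\mathrm{char}\,k>2$) to make it $G$-invariant. What you are missing is the single fact that closes the argument and eliminates the entire Leray/$R^1\pi_*$ detour: by hypotheses (2) and (4) together with the relative base-point-free theorem for surfaces in positive characteristic (cited in the paper as \cite[Theorem 0.4]{tanaka12a}), the Cartier divisor $-(p^e-1)(K_Z+C+B)$ is not merely numerically $\pi$-trivial but is linearly equivalent to $\pi^*A'$ for some ample Cartier divisor $A'$ on $T$, i.e.\ $\mathcal L\simeq\mathcal O_Z(\pi^*A')$. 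Once $\mathcal L$ is known to be pulled back from $T$, one has $H^0(Z,\mathcal L)\simeq H^0(T,\mathcal O_T(A'))$ (because $\pi_*\mathcal O_Z=\mathcal O_T$) and $\mathcal L|_C\simeq\nu^*\mathcal O_T(A')$ with $\nu=\pi|_C$, so the restriction map $H^0(Z,\mathcal L)\to H^0(C,\mathcal L|_C)$ is identified with the pullback $\nu^*\colon H^0(T,\mathcal O_T(A'))\to H^0(C,\nu^*\mathcal O_T(A'))$, whose image is exactly the $G$-invariant subspace because $T$ is normal and $(\nu_*\mathcal O_C)^G=\mathcal O_T$. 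The averaged section therefore lifts on the nose, and no case split on $d=C\cdot G\in\{1,2\}$, no relative duality, and no local analysis at singular fibres is required.

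The gap in your write-up is exactly what you flag at the end: you reduce to showing $\mathcal Q=\mathrm{coker}(\pi_*\mathcal L\to\mathcal N^+)=0$ and do not prove it. In the paper's language $\pi_*\mathcal L=\mathcal O_T(A')$ and $\mathcal N^+=\bigl(\mathcal O_T(A')\otimes\nu_*\mathcal O_C\bigr)^\iota=\mathcal O_T(A')\otimes(\nu_*\mathcal O_C)^\iota=\mathcal O_T(A')$, so $\mathcal Q=0$ for free once $\mathcal L\simeq\pi^*\mathcal O_T(A')$; without that input, your proposed local analysis of the (possibly non-reduced) fibres is genuine extra work. A secondary issue is in your $d=1$ case: the assertion that $R^1\pi_*(\mathcal L(-C))$ is torsion-free is not automatic (cohomology can jump on special fibres even for a flat family with one-dimensional Cohen--Macaulay fibres), so this step also needs more justification as written — though it too becomes unnecessary once one knows $\mathcal L$ is pulled back, since then $G$ is trivial and the restriction map is surjective by the projection formula alone.
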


\begin{proof}
Let 
$\mathcal{L} := \mathcal O_Z( -(p^e-1)(K_Z + C + B))$. We have the following commutative diagram:
\[
\begin{CD}
H^0(Z, F_*^e\mathcal L) @>>> H^0(C, F_*^e\mathcal L|_{C})\\
@VV \phi^Z_{C+B} V  @VV \phi_{B_C}^C V\\
H^0(Z, \mathcal{O}_Z) @>\simeq >> H^0(C, \mathcal{O}_{C}).
\end{CD}
\]
By (3),  the  right vertical arrow  $\phi_{B_C}^C$ is surjective. 

By (2) and (4), we can find an ample Cartier divisor $A'$ on $T$ such that 
$\mathcal L  \simeq \mathcal O_Z(\pi^*A')$ \cite[Theorem 0.4]{tanaka12a}. 
Since $\pi_*\mathcal O_Z=\mathcal O_T$, we have that
\[
H^0(Z, \mathcal{L}) \simeq H^0(T, \mathcal{O}_{T}(A')),
\]
and so the image of the upper horizontal arrow is exactly the $G$-invariant part: $H^0(C, F^e_* \mathcal{L}
|_C)^G$, where $G$ is the Galois group of $K(C)/K(T)$. 

Let $f \in H^0(C, F^e_*\mathcal{L}|_C)$ be such that $\phi_{B_C}^C(f)=1$. 
Since $B_C$ is $G$-invariant by Lemma~\ref{l_invar}, 
we have that $\phi_{B_C}^C$ is $G$-equivariant. In particular,
\[
\phi_{B_C}^C\Big(\frac{1}{|G|}\sum_{g \in G}g(f) \Big) = 1.
\]
We can divide by $|G|$, because $\mathrm{char}\,k>2$ and $|G| \leq 2$ by Lemma~\ref{l_invar}. 
Thus, $\phi^Z_{C+B}$ is surjective and $(Z,C+B)$ is globally $F$-split.  
\end{proof}

\begin{lemma}\label{l_pic1-non-plt}
Let $(Z, C+B)$ be an $n$-dimensional projective log canonical pair with $n \leq 2$, 
where $C$ is a prime divisor and $B$ is an effective $\mathbb Q$-divisor. Assume that  
\begin{enumerate}
\item $(p^e-1)(K_Z + C + B) \sim 0$ for some positive integer $e$, 
\item $H^1(Z,\mathcal{O}_Z)=0$, and
\item if $n=2$, then $(Z,C+B)$ is not plt. 
\end{enumerate}
Then $(Z,C+B)$ is globally $F$-split. 
\end{lemma}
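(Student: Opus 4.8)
The plan is to show that the natural $\mathcal O_Z$-linear map
\[
\phi^Z_{C+B}\colon F^e_*\mathcal L\longrightarrow \mathcal O_Z,\qquad \mathcal L:=\mathcal O_Z\bigl(-(p^e-1)(K_Z+C+B)\bigr),
\]
is surjective. By hypothesis (1), $\mathcal L\simeq \mathcal O_Z$, and for a Frobenius map carrying a trivial twist, surjectivity as a map of sheaves is equivalent to $1$ lying in the image of $H^0(Z,\phi^Z_{C+B})$; so it suffices to produce a global section of $F^e_*\mathcal L$ that maps to $1$.

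First I would pass to cohomology in the $F$-adjunction exact sequence and the first commutative diagram recalled before Lemma~\ref{l_gladj}, obtaining
\[
\begin{CD}
H^0(Z, F^e_*\mathcal L) @>>> H^0\bigl(C, F^e_*(\mathcal L|_C)\bigr)\\
@VV\phi^Z_{C+B}V @VV\psi V\\
H^0(Z, \mathcal O_Z) @>\sim>> H^0(C, \mathcal O_C)
\end{CD}
\]
in which the top row is part of the long exact sequence whose next term is $H^1\bigl(Z, F^e_*(\mathcal L\otimes\mathcal O_Z(-C))\bigr)$. Since Frobenius is affine and $\mathcal L\simeq\mathcal O_Z$, this group equals $H^1(Z,\mathcal O_Z(-C))$, and from $0\to\mathcal O_Z(-C)\to\mathcal O_Z\to\mathcal O_C\to 0$ together with hypothesis (2) and the fact that $H^0(Z,\mathcal O_Z)\to H^0(C,\mathcal O_C)$ is an isomorphism (both are $k$, since $Z$ and $C$ are integral and projective over $k$), it vanishes. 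Hence the top row is surjective, and a diagram chase reduces the problem to showing that $\psi$ hits $1\in H^0(C,\mathcal O_C)=k$. Using the second $F$-adjunction diagram, the normalization $\nu\colon C'\to C$, the identifications $H^0(C,\mathcal O_C)=H^0(C',\mathcal O_{C'})=k$, and $\mathcal L\simeq\mathcal O_Z$, this reduces further to showing that the adjunction pair $(C',B_{C'})$, defined by $K_{C'}+B_{C'}=\nu^*\bigl((K_Z+C+B)|_C\bigr)$, is globally $F$-split.

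Showing that $(C',B_{C'})$ is globally $F$-split is the heart of the argument. If $n=1$ it is immediate: then $C'=\Spec k$ and $B_{C'}=0$, which is globally $F$-split because $k$ is perfect. Suppose $n=2$. Forward adjunction gives that $(C',B_{C'})$ is log canonical, and here I would use the hypothesis that $(Z,C+B)$ is not plt: by inversion of adjunction it follows that $(C',B_{C'})$ is not klt, so $B_{C'}$ has a point $q$ of coefficient exactly one. Moreover $(p^e-1)(K_{C'}+B_{C'})\sim 0$, so $\deg(K_{C'}+B_{C'})=0$; combined with $\deg B_{C'}\ge 1$ this forces $C'\cong\mathbb P^1$ and $\deg B_{C'}=2$. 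Now I would rerun the same $F$-adjunction argument one dimension lower, with the coefficient-one point $q$ playing the role of $C$ and $B_{C'}-q\ge 0$ that of $B$: as $\mathbb P^1$ is smooth and $q\notin\Supp(B_{C'}-q)$, adjunction along $q$ yields $(\Spec k,0)$, which is globally $F$-split, while $H^1(\mathbb P^1,\mathcal O_{\mathbb P^1}(-q))=0$ provides the vanishing for the chase. This shows $(C',B_{C'})$ is globally $F$-split, and the proof is complete.

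Two points need care. First, at each stage one must check that the $F$-adjunction package of the excerpt genuinely applies — that $C$ (resp.\ $q$) is an $F$-pure centre of the relevant log canonical pair and that $(p^e-1)$ times the relevant log canonical divisor is Cartier — which holds here because those divisors are linearly trivial and the pairs are log canonical. Second, the implication ``$(Z,C+B)$ not plt $\Rightarrow (C',B_{C'})$ not klt'' is the main obstacle and requires the most attention: the bare implication can fail when the non-plt locus is disjoint from $C$, so one should record that in the situation where this lemma is applied $(Z,C+B)$ is klt away from $C$ — equivalently, the non-plt locus meets $C$ — which is exactly what makes inversion of adjunction applicable.
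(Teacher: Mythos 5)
Your proof matches the paper's argument essentially step for step: $\mathcal L\simeq\mathcal O_Z$, the $F$-adjunction diagrams, the vanishing $H^1(Z,\mathcal O_Z(-C))=0$ obtained from hypothesis (2) together with the isomorphism $H^0(Z,\mathcal O_Z)\simeq H^0(C,\mathcal O_C)$, the reduction to global $F$-splitting of $(C',B_{C'})$, and then the fact that $C'\simeq\mathbb P^1$ with $\lfloor B_{C'}\rfloor\neq 0$, settled by the one-dimensional case. The caveat you flag at the end is the right one: the paper justifies ``$\lfloor B_{C'}\rfloor\neq 0$'' by citing inversion of adjunction, but that statement is local near $C$ (it identifies klt-ness of $(C',B_{C'})$ with plt-ness of $(Z,C+B)$ \emph{in a neighbourhood of} $C$), so from hypothesis (3) alone one cannot conclude unless the non-plt locus actually meets $C$. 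Two small corrections to the way you phrase the fix: ``$(Z,B)$ klt'' is sufficient but is not \emph{equivalent} to ``the non-plt locus meets $C$'' --- it is strictly stronger --- and in one of the two places the lemma is invoked, the end of the proof of Lemma~\ref{l_non_gfs_const}, where $B=L_Z+E_Z$ has coefficient-one components, the stronger condition fails; what saves the argument there is precisely that the paper arranges $C_Z\cap L_Z\neq\emptyset$, which puts a non-plt point on $C_Z$. (In Case 3 of the proof of Theorem~\ref{thm-main} the stronger condition does hold, via (5) of Proposition~\ref{p_cases}.) So the correct implicit hypothesis is the weaker ``non-plt locus meets $C$'', and it is worth recording it explicitly, as you suggest.
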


\begin{proof}
We only prove the case $n=2$, as the case $n=1$ is easier. 

Let $\nu\colon C'\to C$ be the normalisation.
We may write 
\[
\nu^*((K_Z+C+B)|_C)=K_{C'}+B_{C'}
\]
for some effective $\mathbb Q$-divisor $B_{C'}$ on $C'$ (cf.\ Subsection \ref{s_diff}).
By (3) and by inversion of adjunction, it follows that $\llcorner B_{C'} \lrcorner \neq 0$. 
Thus, by (1), it follows that $C'$ is isomorphic to $\mathbb P^1$. 
Therefore, by the same Lemma in dimension one, $(C', B_{C'})$ is globally $F$-split. 

Let $e$ be a positive integer satisfying (1) and let 
\[
\mathcal L:=\mathcal O_Z(-(p^{e}-1)(K_Z+C+B)).
\]
By (1), we have that $\mathcal L\simeq \mathcal O_Z$. 
We consider the commutative diagram
\[
\begin{CD}
H^0(Z, F_*^e\mathcal L) @>>> H^0(C, F_*^e(\mathcal L|_{C})) @>>> H^1(Z, F_*^e(\mathcal L(-C)))\\
@VV\phi^Z_{C+B} V @VV\psi V\\
H^0(Z, \mathcal{O}_Z) @> \simeq >> H^0(C, \mathcal{O}_{C}).
\end{CD}
\]

\medskip

We now show that $\psi$ is surjective. 
We have the following commutative diagram: 
\[
\begin{CD}
H^0(C, F_*^e(\mathcal L|_{C})) @>\alpha >> H^0(C', F_*^e(\nu^*(\mathcal L|_{C})))\\
@VV\psi V @VV\phi_{B_{C'}}^{C'} V\\
H^0(C, \mathcal{O}_C) @> \simeq >> H^0(C', \mathcal{O}_{C'}).
\end{CD}
\]
Since $\alpha$ is induced from the natural homomorphism $\mathcal O_C \to \nu_*\mathcal O_{C'}$ and $\mathcal L \simeq \mathcal O_Z$, it follows that $\alpha$ is an isomorphism. 
Since $(C', B_{C'})$ is globally $F$-split, $\phi_{B_{C'}}^{C'}$ is surjective. 
Thus, $\psi$ is  surjective as well. 

\medskip

Since $\mathcal L\simeq \mathcal O_Z$, (2) implies 
\[
\begin{aligned}
H^1(Z, F_*^e(\mathcal L(-C)))&\simeq H^1(Z, \mathcal L(-C))\\
&\simeq  H^1(Z, \mathcal O_Z(-C)) \xhookrightarrow{\hphantom{Tr}} H^1(Z, \mathcal O_Z)=0.
\end{aligned}
\]
Since $\psi$ is  surjective, by a diagram chase  it follows that 
$\phi^Z_{C+B}$ is surjective as well, and, in particular,  $(Z,C+B)$ is globally $F$-split. Thus, the claim follows.
\end{proof}

\subsection{Perturbations}
Let $(X,S+B)$ be a log  pair such that $S$ is a prime divisor and $B$ is an effective $\mathbb Q$-divisor 
such that $S \not\subseteq \Supp B$. If the Cartier index of $K_X+S+B$ is divisible by $p$, then, in order to apply the results from the previous subsection, we need to perturb the coefficients of $B$. 

We begin with the following well known result: 

\begin{lemma}
\label{l_basic-perturbation} Let $(X,\Delta)$ be a quasi-projective
 globally sharply $F$-split pair. Then there exists an effective $\mathbb{Q}$-divisor $H$ such that  $(X,\Delta+H)$ is globally sharply $F$-split and 
\[
(p^e-1)(K_X+\Delta+H) \sim 0
\]
for some positive integer $e$.
\end{lemma}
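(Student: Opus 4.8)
The plan is to produce $H$ as a small effective $\mathbb{Q}$-divisor whose support is chosen to kill the $p$-part of the Cartier index of $K_X+\Delta$, while staying close enough to $\Delta$ that the splitting is preserved. First I would recall the numerical criterion for global sharp $F$-splitting: $(X,\Delta)$ is globally sharply $F$-split if and only if for some (equivalently, infinitely many) $e>0$ the natural map
\[
H^0\bigl(X, F^e_*\mathcal{O}_X(\lceil (p^e-1)\Delta \rceil)\bigr) \longrightarrow H^0(X,\mathcal{O}_X)
\]
is surjective, i.e.\ there is a global section of $F^e_*\mathcal{O}_X((p^e-1)(K_X+\Delta)+ D)$ for a suitable exceptional-free divisor, splitting the Frobenius. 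Fix such an $e_0$ and let $\phi$ be the corresponding splitting section.

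Next I would pick an effective Cartier divisor $H_0$ on $X$ (by quasi-projectivity, a sufficiently general very ample divisor works) so that $H_0$ shares no components with $\Supp(\Delta)$ and so that $(K_X+\Delta)$ together with a small multiple of $H_0$ has Cartier index prime to $p$. Concretely, write the Cartier index $r$ of $K_X+\Delta$ as $r=p^a r'$ with $\gcd(p,r')=1$; then for a rational number $t$ of the form $t = c/(p^b r')$ with $c,b$ chosen appropriately, the divisor $K_X+\Delta + tH_0$ has index dividing $p^b r' /\gcd(\cdots)$, and by adjusting $b$ upward one arranges the $p$-adic valuation of the index to vanish — so that $(p^e-1)(K_X+\Delta+tH_0)$ becomes an integral, in fact Cartier, divisor for a well-chosen $e$, and in fact $\mathbb{Q}$-linearly (hence, after scaling, $\sim$) trivial can be arranged by a further elementary adjustment of $t$ within its allowed set. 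Set $H := tH_0$ for this $t$, chosen small.

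The remaining point is that $(X,\Delta+H)$ is still globally sharply $F$-split. Here I would use a standard perturbation argument: since $H$ is effective and $\lceil (p^e-1)(\Delta+H)\rceil \geq \lceil (p^e-1)\Delta\rceil$, the splitting section $\phi$ for $(X,\Delta)$ lifts to a section of the larger sheaf $F^e_*\mathcal{O}_X(\lceil (p^e-1)(\Delta+H)\rceil)$ via the inclusion, and one checks it still maps to $1 \in H^0(X,\mathcal{O}_X)$ — the extra twist only enlarges the source of $\phi^X$, it does not destroy surjectivity onto $\mathcal{O}_X$. A cleaner way, which I would actually carry out, is to invoke that global sharp $F$-splitting is preserved under adding any effective $\mathbb{Q}$-divisor whose $p^e$-th "round-up contribution" is dominated by an already-available section; since $H$ can be chosen arbitrarily small and with the index-fixing property simultaneously (the allowed values of $t$ are dense near $0$), both conditions can be met at once. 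The main obstacle is precisely this simultaneity: one must verify that the arithmetic condition "$(p^e-1)(K_X+\Delta+H)\sim 0$ for some $e$" — which requires both $p$-torsion-freeness of the index and actual linear (not just numerical) triviality — can be achieved by a divisor small enough that the splitting survives; this is where one uses that $\Pic^0$ considerations are irrelevant because the relevant difference is already $\mathbb{Q}$-trivial on the log Calabi–Yau-type locus, or more simply uses that after the index is prime to $p$ one may absorb any torsion by a further harmless modification of $t$.
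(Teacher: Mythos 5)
Your approach diverges from the paper's, and unfortunately it does not work. The paper invokes the argument of Schwede--Smith \cite[Theorem 4.3(ii)]{schwedesmith10}, which is much more direct: a global sharp $F$-splitting of $(X,\Delta)$ is, by definition, a $p^{-e}$-linear map $\phi\colon F^e_*\mathcal O_X\to\mathcal O_X$ with $\phi(1)=1$; via Grothendieck duality $\phi$ corresponds to an effective divisor $D_\phi\sim(1-p^e)K_X$, and the associated $\mathbb Q$-divisor $\Delta_\phi:=\tfrac{1}{p^e-1}D_\phi$ satisfies $(p^e-1)(K_X+\Delta_\phi)\sim 0$. The sharp $F$-splitting condition for the pair forces $\Delta_\phi\geq\Delta$, so one simply takes $H:=\Delta_\phi-\Delta\geq 0$; the same $\phi$ is automatically a splitting for $(X,\Delta+H)=(X,\Delta_\phi)$, and the linear equivalence is built in. The divisor $H$ is not small and not generic — it is dictated entirely by the splitting map.

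Your proposal, by contrast, tries to take $H=tH_0$ with $H_0$ a general ample Cartier divisor and $t$ a small rational number, and both of its key steps break down. First, the claim that a small multiple of $H_0$ can make the Cartier index of $K_X+\Delta+tH_0$ prime to $p$ is false: since $H_0$ is Cartier, the Cartier index of $K_X+\Delta+tH_0$ at any point is $\operatorname{lcm}(r,\operatorname{denom}(t))$ where $r$ is the local Cartier index of $K_X+\Delta$; adding $tH_0$ cannot cancel the $p$-part of $r$, it can only keep it or enlarge it. Second, the ``lifting'' step is in the wrong direction: since $\lceil(p^e-1)(\Delta+H)\rceil\geq\lceil(p^e-1)\Delta\rceil$, the source sheaf of the required splitting map $\phi'$ for $(X,\Delta+H)$ is \emph{larger} than the source of $\phi$, so you would need to \emph{extend} $\phi$ across the inclusion, not merely compose with it — and such an extension need not exist. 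Indeed, enlarging the boundary makes global $F$-splitting strictly harder to achieve; it is not stable under adding arbitrary effective divisors. (It \emph{is} stable under shrinking the boundary to any $\Delta'\leq\Delta_\phi$, which is exactly why the paper's choice $H=\Delta_\phi-\Delta$ works: the target pair equals $(X,\Delta_\phi)$ on the nose.) Finally, even setting these issues aside, arranging $(p^e-1)(K_X+\Delta+H)\sim 0$ — an actual linear equivalence, not just a Cartier-index condition — by wiggling $t$ is not a ``further elementary adjustment''; the Schwede--Smith construction gets it for free because $D_\phi$ already lives in $|(1-p^e)K_X|$.
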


\begin{proof}
This follows from the same proof as \cite[Theorem 4.3(ii)]{schwedesmith10}.
\end{proof}

\begin{lemma} \label{l_perturbation} Let $(X,\Delta)$ be a two-dimensional quasi-projective sharply $F$-pure log pair. Then there exists an effective $\mathbb{Q}$-divisor $H$ such that $(X,\Delta+H)$ is sharply $F$-pure, and 
\[
(p^e-1)(K_X + \Delta+H)
\]
is Cartier for some positive integer $e$.
\end{lemma}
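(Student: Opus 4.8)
The plan is to reduce the two-dimensional statement to the one-dimensional case handled by Lemma~\ref{l_basic-perturbation} by passing to a log resolution and then descending back down. First I would take a log resolution $g\colon Y\to X$ of $(X,\Delta)$ and write $K_Y+\Delta_Y=g^*(K_X+\Delta)+E$, where $\Delta_Y=g_*^{-1}\Delta+\sum(1-a_i)E_i$ collects the strict transform together with the exceptional divisors appearing with their log-canonical coefficients, so that $(Y,\Delta_Y)$ is again sharply $F$-pure (this is preserved under such crepant-type modifications since sharp $F$-purity is insensitive to adding exceptional divisors with coefficient $\le 1$ over a log-canonical pair, cf.\ the behaviour of $F$-splittings under birational maps). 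On the smooth surface $Y$, the pair $(Y,\Delta_Y)$ is snc, so the question becomes local and one-dimensional along each component, and the Cartier-index obstruction disappears because $Y$ is smooth: here one can directly perturb. Concretely, one picks a sufficiently general effective $\mathbb Q$-divisor $H_Y\sim_{\mathbb Q}$ a small multiple of an ample divisor (or, if $X$ is only quasi-projective, an appropriate effective divisor), supported so as to keep the snc condition, with coefficients chosen from the standard set so that $(p^e-1)(K_Y+\Delta_Y+H_Y)\sim 0$ for some $e$; the argument of \cite[Theorem 4.3(ii)]{schwedesmith10} applied on the smooth $Y$ gives exactly this, and sharp $F$-purity is preserved for a small enough perturbation.

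The next step is to push everything back down to $X$. I would set $H:=g_*H_Y$ and check that $(X,\Delta+H)$ is still sharply $F$-pure: this follows because $g_*$ of an $F$-splitting compatible section pulls back correctly, or more directly because $(Y,\Delta_Y+H_Y)$ being sharply $F$-pure forces $(X,\Delta+H)$ to be sharply $F$-pure by the standard descent of $F$-purity along proper birational morphisms between normal varieties (the trace map argument). The remaining point is the Cartier index: I want $(p^{e'}-1)(K_X+\Delta+H)$ Cartier for some $e'$, equivalently the Cartier index of $K_X+\Delta+H$ coprime to $p$. This is where one must be slightly careful, since $g_*$ of a divisor linearly equivalent to zero need not be linearly equivalent to zero. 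Instead I would argue at the level of Cartier indices directly: the components of $H$ can be chosen with denominators prime to $p$, and since $X$ is a surface with klt (hence $\mathbb Q$-factorial up to the relevant local indices) singularities, the local Cartier index of $K_X+\Delta+H$ at each point divides a bounded quantity; one then uses the freedom in choosing the coefficients of $H$ (drawn from $\{1-\tfrac1m\}$-type sets with $m$ coprime to $p$) to arrange that this index is not divisible by $p$. Alternatively, and more cleanly, one first applies Lemma~\ref{l_basic-perturbation} to the $1$-dimensional pair obtained by adjunction to a suitable boundary component, then lifts that perturbation.

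The main obstacle I anticipate is precisely controlling the Cartier index after perturbation: one needs the perturbing divisor $H$ to simultaneously (i) keep the pair sharply $F$-pure, (ii) be effective, and (iii) make the Cartier index of $K_X+\Delta+H$ prime to $p$, and these pull in different directions since shrinking $H$ helps (i) but the index condition (iii) requires choosing the coefficients of $H$ in a specific arithmetic way. The resolution is that on a surface the relevant local Cartier indices are determined by the dual graph and the coefficients, so one has only finitely many arithmetic conditions to satisfy, and since $p$ is a single prime one can always find admissible coefficients (e.g.\ of the form $\tfrac{a}{p^e-1}$) avoiding divisibility by $p$; I would phrase this as choosing $H$ with coefficients in $\tfrac{1}{p^e-1}\mathbb Z$ for suitable $e$ so that the Cartier index condition becomes automatic. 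Everything else is routine: the adjunction formalism of Subsection~\ref{s_diff}, the standard-coefficient statement for $\Diff$, and the $F$-purity descent are all either in the excerpt or in the cited references.
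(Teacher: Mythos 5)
Your route --- take a log resolution $g\colon Y\to X$, perturb upstairs, and push down --- is genuinely different from the paper's, which never leaves $X$. The paper chooses an affine open $U\subseteq X$ containing the singular points of $X$, the non-snc points of $\Delta$, and the non-plt points of $(X,\Delta)$, with no component of $\Delta$ contained in $X\setminus U$; on the affine $U$, sharp $F$-purity and global sharp $F$-splitting coincide, so Lemma~\ref{l_basic-perturbation} gives $H'$ with $(p^e-1)(K_U+(\Delta+H')|_U)\sim 0$, while on $X\setminus U$ smoothness makes $(p^e-1)(K_X+\Delta+H')$ Cartier for free; finally $H:=\frac{1}{p^d+1}H'$ with $d\gg 0$ keeps the Cartier index of the form $p^{e(d)}-1$ (using $(p^d+1)\mid(p^{2d}-1)$) while shrinking $H$ enough to preserve sharp $F$-purity near $X\setminus U$ by inversion of adjunction along $\lfloor\Delta\rfloor$.

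Your plan has two genuine gaps. First, Lemma~\ref{l_basic-perturbation} requires the input pair to be \emph{globally} sharply $F$-split, whereas $(Y,\Delta_Y)$ is at best sharply $F$-pure; you must restrict to an affine open to bridge this, which you mention only parenthetically, and it is exactly the step that makes the argument go. (As an aside, the reason $(Y,\Delta_Y)$ is sharply $F$-pure is not ``insensitivity to exceptional divisors'' --- sharp $F$-purity descends along crepant contractions, it does not formally ascend; here it works because an snc boundary with coefficients in $[0,1]$ on a smooth surface is always sharply $F$-pure, after truncating any negative crepant coefficients, and that is a local Fedder-type check, not a birational heuristic.) Second, and this is the fatal problem, $g_*$ does not transport Cartier-ness or linear equivalence: knowing $(p^e-1)(K_Y+\Delta_Y+H_Y)\sim 0$ on $Y$ --- a relation that involves the exceptional divisors you are about to contract --- tells you nothing about whether $(p^e-1)(K_X+\Delta+g_*H_Y)$ is Cartier at a singular point $x\in X$. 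Your fallback of choosing the coefficients of $H$ in $\frac{1}{p^e-1}\mathbb Z$ cannot repair the Cartier index of $K_X+\Delta$ itself at $x$, which is precisely what needs to be made prime to $p$. The paper achieves this by manufacturing a genuine linear equivalence $(p^e-1)(K_U+(\Delta+H')|_U)\sim 0$ on an affine open of $X$, which forces the local Cartier index at every $x\in U$ to divide $p^e-1$; passing through the log resolution discards this mechanism without replacing it.
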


\begin{proof} 
Let  $U\subseteq X$ be an affine open subset such that
 none of the irreducible components of $\Delta$ are contained in $X \setminus U$ 
and such   that, around each point of  $X \setminus U$, we have that $X$ is smooth, the support of $\Delta$ is simple normal crossing and $(X, \Delta)$ is plt.  
Since $(X,\Delta)$ is sharply $F$-pure and $U$ is affine, it follows that   $(U, \Delta|_U)$ is globally sharply $F$-split.

By Lemma \ref{l_basic-perturbation}, there exists an effective $\mathbb{Q}$-divisor $H'$ on $X$ such that $(U, (\Delta+H')|_U)$ is globally sharply $F$-split and
\[
(p^e-1)(K_U + (\Delta+ H')|_U) \sim 0, 
\]
 for some positive integer  $e$. Since $X$ is smooth around each point of  $X\setminus U$, it follows that 
 $(p^e-1)(K_X+\Delta+H')$ is Cartier on $X$. 
For every positive integer $d$, we have  
\[
(K_X+\Delta+H')-(K_X+\Delta+\frac{1}{p^d+1}H')=\frac{p^d}{p^d+1}H'.
\]
Thus, for every sufficiently large positive integer $d$,  there exists $e(d) \in \mathbb Z_{>0}$ such that 
\[
(p^{e(d)}-1)(K_X+\Delta+\frac{1}{p^d+1}H')
\]
is Cartier. 

Let $H:=\frac{1}{p^d+1}H'$. It is enough to show that, after possibly replacing $d$ by a larger value, the pair $(X,\Delta+H)$ is sharply $F$-pure around any point $q\in X\setminus U$. By assumption, around $q$ we have that  $X$ is smooth,  the support of $\Delta$ is simple normal crossing and $(X,\Delta)$ is plt.
Thus, if $q$ is not contained in the support of $\llcorner \Delta\lrcorner$ and $d$ is sufficiently large, then $(X,\Delta+H)$ is strongly $F$-regular.  On the other hand, if $q$ is contained in the support of $\llcorner \Delta\lrcorner$ then the claim follows from inversion of adjunction (e.g.\ see \cite[Main Theorem]{schwede09}, \cite[Theorem A]{das15}).
\end{proof}

\begin{lemma} \label{l_mfs-perturbation} 
Assume that $\mathrm{char}\,k>2$. 
Let $(Z,C+B)$ be a two dimensional projective sharply $F$-pure pair, 
where $C$ is a prime divisor and $B$ is an effective $\mathbb Q$-divisor. Let $\pi\colon Z\to T$ be a morphism onto a smooth projective curve $T$ such that  $\pi_*\mathcal{O}_Z=\mathcal{O}_T$. Assume that 
\begin{enumerate}
\item $(Z,B)$ is klt, 
\item $-(K_Z+C+B) \equiv \pi^*A$ for some ample $\mathbb Q$-divisor $A$ on $T$, 
\item $C$ is $\pi$-horizontal, 
\item $\rho(Z/T)=1$, and
\item $(p^d-1)B$ is a $\Z$-divisor for some positive integer $d$.
\end{enumerate}
Then there exists an effective $\mathbb Q$-divisor $E$, 
whose support is contained in some fibres of $\pi$, such that $(Z, C+B+E)$ is log canonical and
\[
(p^e-1)(K_Z + C +B+ E)
\] 
is Cartier for some positive integer $e$.
\end{lemma}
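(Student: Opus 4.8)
The plan is to localise the problem over $T$ to the finitely many fibres meeting the ``$p$-bad'' locus, to perturb there by means of Lemma~\ref{l_basic-perturbation} exactly as in Lemma~\ref{l_perturbation}, and finally to convert the resulting perturbation into a $\pi$-vertical one, exploiting that $\pi$ is a Mori fibre space.

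\emph{Step 1 (Reduction).} By hypothesis~(5), at every point where $Z$ is smooth the divisor $(p^{d}-1)(K_Z+C+B)$ is Cartier, since there $K_Z$ and the prime Weil divisor $C$ are Cartier and $(p^{d}-1)B$ is integral. As $Z$ is normal, $\Sing(Z)$ is finite; let $\Sigma_1\subseteq\Sing(Z)$ be the set of points at which $K_Z+C+B$ fails to have Cartier index prime to $p$, and put $\Sigma:=\Sing(Z)\cap\pi^{-1}(\pi(\Sigma_1))$, which is still finite. If $\Sigma=\emptyset$ we take $E=0$ (and $(Z,C+B)$ is log canonical, being sharply $F$-pure), so assume $\Sigma\neq\emptyset$ and set $T_0:=\pi(\Sigma)$, a finite subset of $T$. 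For $t\in T_0$ put $F_t:=\pi^{-1}(t)_{\mathrm{red}}$; since $Z$ is $\Q$-factorial and $\rho(Z/T)=1$, $F_t$ is a prime divisor and is the unique $\pi$-vertical prime divisor over $t$. It therefore suffices to find, for each $t\in T_0$, a rational number $c_t\in(0,1)$ such that, in a neighbourhood of $F_t$, the pair $(Z,C+B+c_tF_t)$ is log canonical and $K_Z+C+B+c_tF_t$ has Cartier index prime to $p$; then $E:=\sum_{t\in T_0}c_tF_t$ works, after passing to a common multiple of the exponents and noting that away from $\bigcup_{t\in T_0}F_t$ nothing is altered.

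\emph{Step 2 (Perturbing near one bad fibre).} Fix $t\in T_0$ and choose an affine open $W\ni t$ in $T$ small enough that $t$ is principal on $W$ and $\pi^{-1}(W)\cap\Sing(Z)=\pi^{-1}(t)\cap\Sing(Z)$; put $Z_W:=\pi^{-1}(W)$. Arguing as in the proof of Lemma~\ref{l_perturbation}, choose an affine open $U\subseteq Z_W$ containing $\pi^{-1}(t)\cap\Sing(Z)$ together with the finite set of points at which $(Z,C+B)$ is not plt or at which $\Supp(C+B+F_t)$ is not simple normal crossing, such that no component of $C+B+F_t$ is contained in $Z_W\setminus U$ and such that along $Z_W\setminus U$ the surface $Z$ is smooth, $\Supp(C+B+F_t)$ is simple normal crossing and $(Z,C+B)$ is plt. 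Since $(Z,C+B)$ is sharply $F$-pure and $U$ is affine, $(U,(C+B)|_U)$ is globally sharply $F$-split, so by Lemma~\ref{l_basic-perturbation} there is an effective $\Q$-divisor $H'$ on $Z_W$ with $(U,(C+B+H')|_U)$ globally sharply $F$-split and $(p^{e}-1)(K_U+(C+B+H')|_U)\sim 0$; since $Z$ is smooth along $Z_W\setminus U$, it follows that $(p^{e}-1)\bigl(K_{Z_W}+(C+B)|_{Z_W}+H'\bigr)$ is Cartier on $Z_W$.

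\emph{Step 3 (Making the perturbation vertical and log canonical).} It remains to replace $H'$ by a multiple of $F_t$. Because $\pi^{*}t$ is Cartier and $\pi^{*}t=m_tF_t$ for some $m_t\in\Z_{>0}$, the local Cartier index of $K_{Z_W}+(C+B)|_{Z_W}+cF_t$ at a point $q\in\pi^{-1}(t)\cap\Sing(Z)$ genuinely depends on $c\in\Q$ — it is not a $\Q$-linear invariant — and the plan is to compare, at each such $q$, the class of $H'$ in $\operatorname{Cl}(\mathcal{O}_{Z,q})$ with that of $F_t$. Using hypothesis~(2) and $\rho(Z_W/W)=1$ together with relative base-point-freeness for surfaces (cf.\ \cite[Theorem 0.4]{tanaka12a}) one sees that $K_{Z_W}+(C+B)|_{Z_W}$ is $\Q$-linearly trivial over $W$; combined with the fact that, for $p\gg 0$, the singularities in $\Sigma$ are cyclic quotient singularities — so that the $p$-primary part of $\operatorname{Cl}(\mathcal{O}_{Z,q})$ is cyclic and is generated by the image of $F_t$ — one obtains a positive rational number $c_t$, which may be taken arbitrarily small by enlarging $e$ (so $c_t\in(0,1)$), with $(p^{e}-1)c_t\in\Z$ and such that $(p^{e}-1)\bigl(K_{Z_W}+(C+B)|_{Z_W}+c_tF_t\bigr)$ differs from the Cartier divisor $(p^{e}-1)\bigl(K_{Z_W}+(C+B)|_{Z_W}+H'\bigr)$ by a principal divisor, hence is Cartier on $Z_W$. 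Finally, with $c_t$ small, $(Z,C+B+c_tF_t)$ is log canonical in a neighbourhood of $F_t$: along $Z_W\setminus U$ this holds because $Z$ is smooth, $\Supp(C+B+F_t)$ is snc and $(Z,C+B)$ is plt, while near the points of $U\cap F_t$ it follows from inversion of adjunction along $C$ (and, off $C$, from $(Z,B)$ being klt), as at the end of the proof of Lemma~\ref{l_perturbation}. Summing over $t\in T_0$ produces the required $E$.

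\emph{Main obstacle.} The crux is Step~3: upgrading the perturbation $H'$ furnished by $F$-singularity theory — which a priori has arbitrary support — to an honest effective $\pi$-vertical divisor supported on the fibres over $T_0$, while preserving Cartierness of $(p^{e}-1)(K_Z+C+B+E)$. This forces one to use $\rho(Z/T)=1$ and $K_Z+C+B\equiv_\pi0$ in an essential way, to be careful about the distinction between numerical and $\Q$-linear equivalence over the non-proper base $W$, and to understand the local class groups of the klt surface singularities occurring in $\Sigma$ well enough to know that the class of the fibre $F_t$ already accounts for the $p$-primary part of the obstruction to Cartierness at each such point.
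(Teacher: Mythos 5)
Your Steps 1 and 2 are fine, but Step 3 is the crux and it contains a genuine gap, which you yourself half-acknowledge. The paper's proof never needs to ``convert'' a perturbation $H'$ with arbitrary support into a vertical one by comparing classes in $\operatorname{Cl}(\mathcal O_{Z,q})$. Instead it begins with a much cleaner global fact, coming from Lemma~\ref{l_invar}: since $\pi$ is a $\mathbb P^1$-bundle over a dense open $T^0\subseteq T$ and $-(K_Z+C+B)\equiv\pi^*A$, the $\Z$-divisor $(p^d-1)(K_Z+C+B)$ restricted to $\pi^{-1}(T^0)$ is linearly trivial, so \emph{globally} $(p^d-1)(K_Z+C+B)\sim\sum_i a_iF_i+\sum_j b_jF'_j$ for finitely many reduced fibres. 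This is exactly what makes the perturbation vertical by fiat; your attempt to reconstruct it locally via the local class group is where things break down. In particular, your assertion that ``the $p$-primary part of $\operatorname{Cl}(\mathcal O_{Z,q})$ \dots is generated by the image of $F_t$'' is unjustified -- the class of $F_t$ at a singularity has order dividing the multiplicity of the fibre, which has no reason to match (or even to be divisible by) the $p$-part of the determinant of the intersection matrix. You also invoke ``$p\gg 0$'' to claim the singularities are cyclic quotients, but the lemma's only hypothesis is $p>2$, so this is not available.

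A second, related gap: you treat all bad fibres uniformly, whereas the paper's argument splits them into two qualitatively different cases. For fibres $F_i$ along which $(Z,C+B)$ is plt, one perturbs by $\alpha_iF_i$ using adjunction to $C$. For fibres $F'_j$ carrying a zero-dimensional lc centre $z$ (necessarily $z\in C$, since $(Z,B)$ is klt), one does \emph{not} perturb at all: Lemma~\ref{l_basic-perturbation} applied near $z$ produces an $H$ that cannot pass through the lc centre $z$, whence $(p^{e_3}-1)(K_Z+C+B)$ is already Cartier near $z$. Combining this with the multiplicity estimate $mF'_j\cdot C\le 2$ from Lemma~\ref{l_invar}, so that $m\in\{m_z,2m_z\}$, one concludes the Cartier index of $b_jF'_j$ divides $2(p^{e_4}-1)$ -- this is where $\mathrm{char}\,k>2$ enters. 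Your proposal never invokes Lemma~\ref{l_invar}, never isolates the lc centre on $C$, and never uses $p>2$; these omissions are not cosmetic, they are the substance of the argument. To repair Step~3 you would essentially have to reproduce the paper's Steps above, at which point the local class-group comparison you propose becomes unnecessary.
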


\begin{proof}
By Lemma \ref{l_invar}, there exists a non-empty open subset $T^0 \subseteq T$ 
such that the induced morphism 
$\pi^{-1}(T^0) \to T^0$ is a $\mathbb P^1$-bundle. 
By (2) and (5), 
it follows that, after possibly shrinking $T^0$, there exists a positive integer $d$ such that 
\[
(p^d-1)(K_Z+C+B)|_{\pi^{-1}(T^0)} \sim 0.
\] 
By (1) of  Lemma~\ref{l_invar},  every fibre of $\pi$ is irreducible. Thus, 
we may write 
\[
(p^d-1)(K_Z + C + B) \sim \sum_{i=1}^r a_i F_i+\sum_{j=1}^s b_j F'_j,
\] 
where $a_i, b_j \in \Z$, $F_i=\pi^{-1}(t_i)_{\mathrm{red}}$, and $F'_j=\pi^{-1}(t'_j)_{\mathrm{red}}$, with 
\[
t_1,\dots,t_r,t'_1,\dots,t'_s\in T,
\] so that 
 $(Z, C+B)$ is plt (resp.\ not plt) along $F_i$ (resp.\ at some point of $F'_j$). 
Since $\llcorner C+B\lrcorner=C$ is $\pi$-horizontal and $(Z,B)$ is klt, by inversion of adjunction 
we can find $\alpha_i \in \mathbb Q_{>0}$ for $i=1,\dots, r$ such that 
$(Z, C+B+\sum_{i=1}^r \alpha_i F_i)$ is log canonical and 
$(p^{e_1}-1)(K_Z+C+B+\sum_{i=1}^r \alpha_i F_i)$ is Cartier around $\bigcup_{i=1}^r F_i$ 
for some $e_1 \in \mathbb Z_{>0}$. 

Fix an index $1\leq j \leq s$. 
It is enough to find $e_2 \in \mathbb Z_{>0}$ such that $(p^{e_2}-1)b_jF'_j$ is Cartier.
By construction, there exists a zero-dimensional log canonical centre $z \in F'_j$ of $(Z,C+B)$.  Since $(Z,B)$ is klt, it follows that $z\in C$. 
Let $m_z$ be the Cartier index of $F'_j$ at $z$. 
Since $z \in C$, it follows  that $m_z F'_j \cdot C \geq 1$. 
We can write $\pi^*(\pi(z))=mF'_j$ for some positive integer $m$. 
 Lemma~\ref{l_invar} implies that $mF'_j \cdot C=\pi^*(\pi(z)) \cdot C \le 2$.  Thus,  
\[
\frac{1}{m_z}\le F'_j \cdot C \le \frac{2}{m} ,
\]
and in particular $2m_z \geq m$. Since $m_z$ divides $m$, it follows that either  $m=m_z$ or $m=2m_z$.

By applying Lemma \ref{l_basic-perturbation} 
to an affine open neighbourhood $U$ of $z$,
we can find $e_3 \in \mathbb Z_{>0}$ and an effective $\Q$-divisor $H$ on $U$ 
such that $(U, (C+B)|_{U}+H)$ is sharply $F$-pure and 
\[
(p^{e_3}-1)(K_{U}+(C+B)|_{U}+H) \sim 0.
\] 
Since $z \in Z$ is a zero-dimensional log canonical centre of $(Z, C+B)$, it follows that  $z$ is not contained in the support of $H$.
Therefore, $(p^{e_3}-1)(K_Z+C+B)$ is Cartier around $z$. 
Since 
\[
(p^d-1)(K_Z + C + B) \sim \sum_{i=1}^r a_i F_i+\sum_{j=1}^s b_j F'_j,
\] 
it follows that $(p^{e_4}-1)b_jF'_j$ is Cartier around $z$ for some $e_4 \in \mathbb Z_{>0}$. Since $m\in \{m_z,2m_z\}$, we have that  $2(p^{e_4}-1)b_jF'_j$ is Cartier. 
Since $\mathrm{char}\,k \neq 2$, we can find $e_2\in \mathbb Z_{>0}$ such that $(p^{e_2}-1)b_jF'_j$ is Cartier. 
Thus, the claim follows. \end{proof}

\subsection{Flat families of log pairs and liftability}

\begin{definition} Given a reduced noetherian separated scheme $T$, 
\emph{a flat family of log pairs} $(\mathcal{X},\mathcal{B})$  over $T$
consists of the data of
\begin{itemize}
\item a normal scheme $\mathcal X$ and an effective $\mathbb Q$-divisor $\mathcal B$ on $\mathcal X$ such that $K_{\mathcal X}+\mathcal B$ is $\mathbb Q$-Cartier, 
\item a separated and flat morphism of finite type $\pi\colon \mathcal X \to T$ 
such that all the fibres of $\pi$ are geometrically normal, and 
\item for each irreducible component $\mathcal{B}_i$ of $\mathcal{B}$, 
the induced morphism $\pi|_{\mathcal B_i}\colon \mathcal{B}_i \to T$ is flat and all the fibres of $\pi|_{\mathcal B_i}$ are geometrically integral. 
\end{itemize}  
\end{definition}

\begin{definition}
Let $(\mathcal{X},\mathcal{B})$ be a flat family of log pairs over a reduced noetherian separated scheme $T$ and let $(X,B)$ be a log pair  over 
an algebraically closed field $k$. We say that $(X,B)$ is a \emph{geometric fibre} of  $(\mathcal{X},\mathcal{B})$, if there exists a cartesian diagram
\[
\begin{CD}
X @>\beta>> \mathcal X\\
@VVV @VVV\\
\Spec k @>\alpha >> T 
\end{CD}
\]
such that $\beta^*(\mathcal{B}) = B$, 
where if we write $\mathcal B=\sum_{i=1}^r b_i \mathcal B_i$ with $\mathcal B_1,\dots,\mathcal B_r$ prime components,  then
we define 
\[
\beta^*(\mathcal{B}):=\sum_{i=1}^r b_i\beta^{-1}(\mathcal B_i).
\] 
Note  that, for each $i=1,\dots,r$,  the scheme-theoretic inverse image $\beta^{-1}(\mathcal B_i)$ is a prime divisor on $X$. 
\end{definition}

\begin{definition}\label{d-liftable}
Let $X$ be a smooth variety over a perfect field $k$ of characteristic $p>0$, 
and let $D$ be a simple normal crossing divisor on $X$. Write $D = \sum_{i=1}^r D_i$, where $D_i$ are the irreducible components of $D$. We say that the pair $(X,D)$ {\em lifts to characteristic zero over a smooth base} if 
there exist 
\begin{itemize}
	\item a scheme $T$ smooth and separated over $\Spec \Z$,
	\item a smooth and separated morphism $\mathcal{X} \to T$,
	\item effective Cartier divisors $\mathcal{D}_1, \dots, \mathcal D_r$ on $\mathcal{X}$ 
such that the scheme-theoretic intersection $\bigcap_{i \in J} \mathcal{D}_i$ for any subset $J \subseteq \{1, \ldots, r\}$ is smooth over $T$, and
	\item a morphism $\alpha \colon \Spec k \to T$,
\end{itemize}
such that the base changes of the schemes $\mathcal{X}, \mathcal{D}_1, \ldots, \mathcal{D}_r$ over $T$ by $\alpha \colon \Spec k \to T$ are isomorphic to $X, D_1, \ldots, D_r$, respectively. 
\end{definition}

We refer to \cite[Definition 8.11]{EV92} for the definition of  liftability to the second Witt vectors $W_2(k)$.

\begin{remark}
Under the same assumption as in Definition~\ref{d-liftable}, 
if $(X,D)$ lifts to characteristic zero over a smooth base, then $(X,D)$ also lifts to $W_2(k)$. 
Indeed, since $W_2(k)$ is a henselian local ring \cite[Proposition 2.8.4]{Fu15}, and $T$ is smooth over $\Spec \Z$, 
the morphism $\Spec k \to T$ lifts to a morphism $\Spec W_2(k) \to T$ by \cite[Proposition 2.8.13]{Fu15}:
\[
\begin{CD}
\mathcal{X} \times_{T} W_2(k) @>>> \mathcal{X}\\
@VVV @VVV\\
\Spec W_2(k) @>>> T.
\end{CD}
\]
\end{remark}

\section{Proof of the main theorem for $\epsilon$-klt log del Pezzo} \label{section-bounded-case}

The goal of this section is to prove the main theorem for $\epsilon$-klt log del Pezzo pairs, for any fixed $\epsilon>0$ (Proposition~\ref{prop-bounded-case}). 
The idea of the proof is that the family of $\epsilon$-klt log del Pezzos over $\Spec \Z$ is bounded (Lemma~\ref{lem-bounded-family}). 
The same result is known to hold  over a fixed algebraically closed field 
 \cite[Theorem 6.9]{alexeev94}. 
Our argument follows the same methods, however we include the proof for  completeness.  

Let $S$ be a noetherian separated scheme,  let  
$Y$ be a scheme which is projective and flat over $S$ and let 
$H$ be an invertible sheaf on $Y$, which is ample over $S$. Let $\phi \in \mathbb{Q}[t]$ be an arbitrary polynomial. 
We define  $\Hilb^{\phi,\, H}_{Y/S}$ to be the Hibert functor such that, for any scheme $T$ over $S$,  $\Hilb^{\phi,\, H}_{Y/S}(T)$ is the set of  closed subschemes in $Y\times_S T$ which are flat over $T$, and with Hilbert polynomial equal to $\phi$ with respect to the pull-back of $H$ on $Y\times_S T$ (see  \cite[Section 5.1]{fga2005} for more details).  
We also define $\Div^{\phi,\, H}_{Y/S}$ to be the functor such that, for any scheme $T$ over $S$, 
$\Div^{\phi,\, H}_{Y/S}(T)$ is the set of  effective Cartier divisors on $Y\times_S T$ which are flat over $T$, and with Hilbert polynomial equal to $\phi$ with respect to the pull-back of $H$ on $Y\times_S T$ (see [ibid, Section 9.3]). 


Since $\Hilb^{\phi,\, H}_{Y/S}$ is representable by a projective scheme over $S$ [ibid, Theorem 5.14 and Subsection 5.1.3], it follows that $\Div^{\phi,\, H}_{Y/S}$ is representable by a quasi-projective scheme over $S$ [ibid, Theorem 9.3.7].


\begin{lemma}  \label{lem-bounded-family} 
Let $I \subseteq (0,1) \cap \mathbb Q$ be a finite set and let $\epsilon>0$. 

Then  there exists a flat family of log pairs $(\mathcal{X},\mathcal{B})$ over a reduced quasi-projective scheme $T$ over $\Spec \Z$, such that every $\epsilon$-klt log del Pezzo pair $(X,B)$  over any algebraically closed field $k$ 
of characteristic $p>5$,  with the  coefficients of $B$ contained in $I$, is a geometric fibre of $(\mathcal{X},\mathcal{B})$.
\end{lemma}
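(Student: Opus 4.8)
The plan is to realise every $\epsilon$-klt log del Pezzo $(X,B)$ with coefficients in $I$ as a geometric fibre of a single flat family defined over a scheme of finite type over $\Spec\Z$, by combining the boundedness of the underlying surfaces with a Hilbert-scheme argument for the boundary. First I would invoke the boundedness of $\epsilon$-klt del Pezzo surfaces: there is a uniform constant depending only on $\epsilon$ bounding the Cartier index and the intersection numbers, so that for a suitable fixed positive integer $N=N(\epsilon)$ the divisor $-N(K_X+B)$ (or rather $-NK_X$, together with $NB$ as a Weil divisor) is very ample with Hilbert polynomial lying in a finite set $\Phi$. This uses, over a fixed field, \cite[Theorem 6.9]{alexeev94}; the point of this lemma is to spread it out over $\Spec\Z$. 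Concretely, I would argue that the same bounds on Cartier index and volume hold uniformly for $p>5$ — the inputs are ACC-type statements (Lemma~\ref{l_acc.num}, Proposition~\ref{p_bounded-index}) and the effective very-ampleness of adjoint-type bundles on surfaces, all of which are available in positive characteristic via \cite{tanaka12a} — so that each such $X$ embeds in a fixed projective space $\pr n.$ with bounded degree.

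Next I would set up the parameter space. Over $S:=\Spec\Z$, take $Y:=\pr n._S$ with $H:=\mathcal O(1)$, and let $\Hilb^{\psi,H}_{Y/S}$ parametrise the embedded surfaces (with $\psi$ ranging over the finite set of possible Hilbert polynomials); this is a projective $S$-scheme. Passing to a locally closed subscheme, one cuts out the locus where the fibre is a normal surface with at worst klt singularities, $-(K+B)$ ample, and $(X,B)$ $\epsilon$-klt — each of these is a constructible (indeed locally closed, after the usual flattening stratification) condition on the base, by standard openness/constructibility of such properties in flat families together with the boundedness of the relevant resolutions. Simultaneously, over this base I use the relative $\Div^{\phi,H}_{Y/S}$ functor (representable by a quasi-projective $S$-scheme, as recalled in the excerpt) to parametrise, for each component of $B$, its image as an effective Cartier divisor of the appropriate bounded degree in the very ample embedding; taking a fibre product over all components of $B$, with multiplicities read off from the finite set $I$, produces a quasi-projective $S$-scheme $T$ carrying a tautological family $(\mathcal X,\mathcal B)$. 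After stratifying $T$ and discarding bad strata, I arrange that $\mathcal X\to T$ is flat with geometrically normal fibres and each $\mathcal B_i\to T$ is flat with geometrically integral fibres, so that $(\mathcal X,\mathcal B)$ is a flat family of log pairs in the sense of the Definition above; replacing $T$ by its reduction is harmless. By construction every $(X,B)$ as in the statement, being embedded by $-NK_X$ with its boundary realised inside this embedding, appears as a geometric fibre.

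The main obstacle I expect is the uniformity of the boundedness over $\Spec\Z$, i.e.\ producing a single $N=N(\epsilon)$ (independent of $p>5$) bounding the Cartier index of $K_X+B$ and the degree $(-NK_X)^2$ — Alexeev's proof is characteristic-free in spirit, but one must check that every ingredient (the ACC for mld's and log canonical thresholds on surfaces, base-point-freeness and effective very-ampleness for adjoint bundles, the Bogomolov-type bound on the number of singular points) has been established in positive characteristic with constants not depending on $p$; the references \cite{alexeev93,tanaka12a} and Proposition~\ref{p_bounded-index} supply exactly this. A secondary technical point is the careful stratification of $T$ to guarantee all the geometric-normality and geometric-integrality conditions in the definition of a flat family of log pairs, but this is routine given the boundedness of the relevant log resolutions. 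Once these are in hand, the Hilbert/$\Div$-scheme packaging is formal.
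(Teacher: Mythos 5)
Your proposal follows the same overall route as the paper: uniform boundedness of the Hilbert data for $\epsilon$-klt log del Pezzos with coefficients in $I$ (Cartier index, degree, embedding dimension), followed by packaging into Hilbert and $\Div$ schemes over $\Spec\Z$ and a final stratification of the base. The paper cites \cite[Corollary 1.4 and Remark 6.3]{witaszek15} for the uniform bound on the very ample embedding and the Hilbert polynomials, which is the step you sketch how to re-derive from ACC and effective adjoint results; it also replaces $I$ by $(0,1)\cap\frac{1}{n}\Z$ and parametrises $nB$ as a single effective Cartier divisor in $\Div^{\psi,\mathcal H}_{\mathcal U/S}$ rather than handling each component of $B$ separately, but this is cosmetic.

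The one place where you diverge, and which I would flag, is the passage to a locally closed subscheme ``where the fibre is a normal surface with at worst klt singularities, $-(K+B)$ ample, and $(X,B)$ $\epsilon$-klt.'' This step is not needed: the Lemma only asserts that every $\epsilon$-klt log del Pezzo occurs as a geometric fibre of $(\mathcal X,\mathcal B)$, not that every geometric fibre is one. It is also not routine: establishing that the $\epsilon$-klt condition is constructible over $\Spec\Z$ would in effect require a simultaneous log resolution over the base, which is precisely what Proposition~\ref{prop-bounded-case} proves afterward by noetherian induction, so invoking it here would be circular or at least would need a separate argument. The paper avoids this entirely: it takes $T$ to be the reduction of the parameter scheme for the $\Div$-functor and then stratifies (via the generic point, a finite base change, and noetherian induction) only to secure the conditions in the definition of a flat family of log pairs, namely flatness, geometric normality of fibres, and geometric integrality of the boundary components --- conditions that \emph{are} standard to arrange by generic flatness and spreading out. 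If you drop the $\epsilon$-klt cutting-out step and retain only the flatness and integrality stratification, your argument coincides with the paper's.
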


\begin{proof}
There exist positive integers $b$ and $n$ which satisfy the following property 
(e.g.\ see \cite[Corollary 1.4 and Remark 6.3]{witaszek15}): 
for every $(X, B)$ over $k$ as in the Lemma, we can find a very ample divisor $H$ on $X$ 
such that 
\begin{itemize}
\item $|H| \text{ embeds } X \text{ into } \mathbb P_k^b, $
\item $ nB$ is Cartier, 
\item $|\phi_{X,i}| \leq b \text{ for all } i \geq 0$ where $\phi_X =\sum_{i \geq 0}\phi_{X,i} t^i\in \Q[t]$ is the Hilbert polynomial of $X$ with respect to $H$, and 
\item  $|\psi_{nB, i}| \leq b \text{ for all } i \geq 0$ where $\psi_{nB} =\psi_{nB,1} t+\psi_{nB, 0} \in \Z[t]$ is the Hilbert polynomial of $nB$ with respect to $H$.
\end{itemize}
We may assume that $I=(0, 1) \cap \frac{1}{n}\Z$. Further,
since $\dim X = 2$, we have that $2\phi_X \in \Z[t]$ and $\deg \phi_X=2$. 

In particular, if $G$ is the functor defined by 
\[
G:=\bigcup_{\phi} \Hilb^{\phi,\, \mathcal{O}_{\mathbb{P}_{\Z}^b/ \Z}(1)}_{\mathbb{P}_{\Z}^{b}/ \Z}
\]
where the union is taken over all $\phi=\sum\phi_i t^i\in \Q[t]$  such that $2\phi \in \Z[t]$, $\deg \phi = 2$, and $|\phi_i| \leq b$ for all $i\geq 0$, then $X\in G(\Spec k)$. 

By \cite[Theorem 5.14 and Subsection 5.1.3]{fga2005}, the functor $G$ is representable by a projective scheme $S$ over $\Spec \mathbb Z$. Let $\mathcal U\subseteq \mathbb P^b_{\mathbb Z}\times_{\mathbb Z} S$ be its universal closed subscheme. 
Let $\mathcal{H}$ be the pull-back of  $\mathcal{O}_{\mathbb{P}_{\Z}^b/ \Z}(1)$ to $\mathcal U$.  Then $\mathcal H$ is ample over $S$. We have
\[
(nB \subseteq X) \in \bigcup_{\psi} \Div^{\psi,\, \mathcal{H}}_{\mathcal U/S}(\Spec k), 
\]
where the union is taken over all $\psi=\psi_1 t+\psi_0 \in \Z[t]$ 
such that $|\psi_0| \leq b$ and $|\psi_1| \leq b$. 

Let $T$ be the reduction of the scheme representing $\bigcup_{\psi} \Div^{\psi,\, \mathcal{H}}_{\mathcal U/S}$. 
Note that $T$ is quasi-projective over $\Spec\,\Z$. 
Let $\mathcal B'$ be the universal effective Cartier divisor on $\mathcal X:=\mathcal U \times_S T$. 
Let $\mathcal B:=\frac{1}{n}\mathcal B'$. 
Take a generic point $\zeta$ of $T$. 
By taking a base change of some finite morphism of an open subset of
$T$, we may assume that the fibres of the irreducible components of $\mathcal{B}$ over $\zeta$ are geometrically integral. Thus, there exists an open subscheme $U \subseteq T$, such that $(\mathcal{X}|_U, \mathcal{B}|_U)$ is a flat family of log pairs. By replacing $T$ by $U \amalg T \backslash U$ and repeating the same argument to $T \backslash U$, we may conclude the proof.
\end{proof}

We now show the main result of this Section. 

\begin{proposition} \label{prop-bounded-case} Let $I \subseteq (0,1) \cap \mathbb Q$ be a finite set and let $\epsilon>0$. Then, there exists a positive integer $p(I, \epsilon)$ which satisfies the following property:

Let $(X,B)$ be an $\epsilon$-klt log del Pezzo pair  over an algebraically closed field of characteristic $p > p(I, \epsilon)$ such that the coefficients of $B$ are contained in $I$. 
Then there exists a log resolution $\mu \colon V \to X$ of $(X, B)$ 
such that $(V, \Exc(\mu) \cup \mu^{-1}_*(\Supp B))$ lifts to characteristic zero over a smooth base.
\end{proposition}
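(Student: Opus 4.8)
The plan is to combine the boundedness result of Lemma~\ref{lem-bounded-family} with noetherian induction on the base scheme $T$. First I would invoke Lemma~\ref{lem-bounded-family} to obtain a flat family of log pairs $(\mathcal X, \mathcal B)$ over a reduced quasi-projective $\Z$-scheme $T$, such that every $\epsilon$-klt log del Pezzo pair $(X,B)$ with coefficients in $I$ (over any algebraically closed field of characteristic $p>5$) occurs as a geometric fibre. Since the statement we want is about large characteristic, we may freely discard the finitely many primes below any fixed bound and shrink $T$; the key point is that $T$ is noetherian, so a constructible-set / noetherian-induction argument will suffice if we can handle a single generic point of $T$ at a time.

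The heart of the argument is the following claim on a neighbourhood of a generic point $\zeta \in T$: after replacing $T$ by an open dense subscheme (and possibly passing to a finite cover, which does not affect the conclusion since geometric fibres are unchanged), there is a projective birational morphism $\mathcal V \to \mathcal X$ which is simultaneously, on every geometric fibre, a log resolution of the corresponding $(X,B)$, and such that the total transform of $\Exc \cup \Supp\,\mathcal B$ is a relative simple normal crossing divisor over $T$. To produce $\mathcal V$, I would take a log resolution of the generic fibre $(\mathcal X_\zeta, \mathcal B_\zeta)$ over the function field $K(T)$, spread it out to a projective birational morphism over an open neighbourhood of $\zeta$, and then shrink $T$ further so that: the morphism $\mathcal V \to T$ is smooth, each component of the exceptional locus and of the strict transform of $\mathcal B$ is smooth over $T$, and all their scheme-theoretic intersections are smooth over $T$ (this uses generic smoothness / generic flatness applied finitely many times to the finitely many strata). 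Here we must be careful that on the closed fibres in large characteristic the spread-out map is still birational and still a log resolution — this is where discarding small primes is used, since the resolution over $K(T)$ only involves finitely many denominators and finitely many bad primes.

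Once $\mathcal V \to T$ is constructed over a $\Z$-scheme $T$ that is smooth and separated (shrink once more, using generic smoothness of $T$ over $\Spec \Z$, i.e.\ remove the finitely many primes where $T$ is not smooth), the geometric fibre $(V, \Exc(\mu)\cup \mu_*^{-1}(\Supp B))$ of $(\mathcal V, \dots)$ over a point $\Spec k \to T$ is by construction a log resolution of $(X,B)$ that lifts to characteristic zero over a smooth base, exactly in the sense of Definition~\ref{d-liftable}: the lift is $\mathcal V \to T$ together with the Cartier divisors furnished by the components of the exceptional and boundary loci. Finally, by noetherian induction on $T$: the subset of $T$ over which the above works is constructible and contains the generic point of each irreducible component, so after finitely many steps — replacing $T$ by a union of locally closed pieces and re-running the argument on the complement — we exhaust $T$. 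Setting $p(I,\epsilon)$ to be larger than $5$ and larger than all the finitely many primes that were discarded across these finitely many steps gives the required bound.

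The main obstacle I anticipate is the simultaneous-log-resolution step: guaranteeing that a single morphism $\mathcal V \to \mathcal X$ restricts to a log resolution on every closed geometric fibre in large characteristic, with the exceptional-plus-boundary divisor being \emph{relatively} simple normal crossing over $T$, not merely snc on each fibre. One has to check that no embedded components or unexpected singularities of the exceptional locus appear over special points, and that the intersection strata stay smooth of the expected dimension; all of this is achieved by repeatedly shrinking $T$ and removing finitely many primes, but it requires some care to organise (applying generic flatness/smoothness to each of the finitely many intersection strata $\bigcap_{i\in J}\mathcal D_i$ and to the morphism $\mathcal V\to T$ itself). The rest — the noetherian induction and the translation into Definition~\ref{d-liftable} — is then formal.
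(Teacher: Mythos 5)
Your argument is correct and follows essentially the same route as the paper: invoke Lemma~\ref{lem-bounded-family} to get a bounding flat family over a reduced quasi-projective $\Z$-scheme $T$, spread out a log resolution of the (geometric) generic fibre over a smooth open piece of each characteristic-zero component of $T$ (after a finite cover, harmless on geometric fibres), and close up via noetherian induction, taking $p(I,\epsilon)$ larger than the finitely many positive residue characteristics of generic points encountered along the way. The only cosmetic difference is that the paper isolates this spreading-out step as a self-contained Claim before the induction, whereas you interleave the two; the substance is the same.
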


\begin{proof}
We fix a positive integer $p(I,\epsilon)>5$. We will replace $p(I,\epsilon)$ by a larger number if necessary. 
By Lemma \ref{lem-bounded-family}, there exists a flat family of log pairs $(\mathcal{X}, \mathcal{B})$ over a reduced quasi-projective scheme $T$ over $\Spec \Z$, such that 
every pair $(X, B)$ as in the Proposition is a geometric fibre of 
the pair $(\mathcal{X}, \mathcal{B})$. 

\medskip
\textbf{Claim:} 
Let $T'$ be an irreducible component of $T$ 
such that the field $K(T')$ is of characteristic zero. 

Then there exists a 
dominant morphism $S \to T'$ 
from an integral scheme $S$ which satisfies the following properties: 
\begin{enumerate}
\item[(a)]{$S$ is smooth over $\Spec\,\Z$.}
\item[(b)]{If we set $\mathcal X_S:=\mathcal X \times_T S$, then 
there exists a projective birational morphism $\mu_S\colon\mathcal V_S \to \mathcal X_S$ over $S$ such that the induced morphism $\mu_{S, s} \colon \mathcal V_s \to \mathcal X_s$ between any fibres over $s \in S$ 
is birational, $\mathcal V_S$ is smooth over $S$, and  
\[
\Exc({\mu}_S) \cup ({\mu}_S)_*^{-1}(\Supp \mathcal B)
\] 
is simple normal crossing over $S$, i.e.\ all the strata are smooth over $S$. }
\end{enumerate}

\medskip

We first show the Proposition, assuming the Claim. By noetherian induction, we can find a surjective morphism 
$$\left(\coprod_{1\leq i \leq \alpha} S_i\right) \amalg \left(\coprod_{1 \leq j \leq \beta} S'_j\right) \to T$$  
such that all $S_i$ and $S'_j$ are separated integral schemes of finite type over $\Spec\, \Z$, 
any $K(S_i)$ (resp.\ $K(S'_j)$) is of characteristic zero (resp.\ $p_j>0$), 
and  each $S_i$ satisfies properties (a) and (b).
Then the Proposition follows after possibly increasing $p(I, \epsilon)$ 
so that $p(I,\epsilon)\ge p_j$ for all $j=1,\dots, \beta$. 

\medskip

We now show the Claim. 
We set $S:=T'$. 
It is enough to show that properties (a) and (b) hold 
after possibly replacing $S$ by a finite cover 
(i.e.\ a finite surjective morphism from an integral scheme) of an open subset. 

By our assumption, 
the field $K(S)=\mathcal O_{S, \xi}$ is of characteristic zero, 
where $\xi$ is the generic point of $S$. 
Thus, after replacing $S$ by an open subset, 
we may assume that $S$ is smooth over $\Spec\,\mathbb Z$, hence (a) holds. 

Let $(X_{\overline{\xi}}, B_{\overline{\xi}})$ be the base change to the algebraic closure $\overline{\xi}$ 
of the generic fibre $(X_{\xi}, B_{\xi})$ and take a log resolution ${\mu}_{\overline{\xi}}\colon V_{\overline{\xi}} \to X_{\overline{\xi}}$ of $(X_{\overline{\xi}}, B_{\overline{\xi}})$. 
After replacing $S$ by a finite cover of an open subset, 
we may assume that 
there exists a projective birational morphism over $S$ 
\[
{\mu}_S\colon \mathcal{V}_{S} \to \mathcal X_{S}
\]
whose base change to $\overline{\xi}$ is the same as 
$V_{\overline{\xi}} \to X_{\overline{\xi}}$, where $\mathcal{X}_{S}:=\mathcal{X} \times_T S.$ 

We check that property (b) holds, after replacing $S$ by an open subset.
Indeed, since ${\mu}_{\overline{\xi}}\colon V_{\overline{\xi}} \to X_{\overline{\xi}}$ is a projective birational 
morphism of surfaces, so are the morphisms 
$\mathcal{V}_{S, s} \to \mathcal X_{S, s}$ between the geometric fibres, for any geometric point $s$ in an open neighbourhood of $\xi$. 
Moreover, $V_{\overline{\xi}}$ and any stratum of  
$\Exc({\mu}_{\overline{\xi}}) \cup ({\mu}_{\overline{\xi}})_*^{-1}(\Supp \mathcal B_{\overline{\xi}})$ 
are smooth, hence  $\mathcal V_S$ and all the strata of 
$\Exc({\mu}_S) \cup ({\mu}_S)_*^{-1}(\Supp \mathcal B)$ are smooth.
\end{proof} 

\begin{remark}
By Proposition~\ref{prop-bounded-case}, it follows that $\epsilon$-klt log del Pezzo pairs in large characteristic satisfy (2) of Theorem~\ref{thm-main}. 
By  \cite[Theorem 1.2]{schwedesmith10} and  noetherian induction, 
it follows that (1) of Theorem~\ref{thm-main} holds 
for $\epsilon$-klt log del Pezzo pairs in large characteristic. 
However, we do not use this fact in this paper. 
\end{remark}

\section{Liftability to characteristic zero}

The goal of  this section is to study plt pairs $(Z, C_Z+B_Z)$, such that  $\llcorner C_Z+B_Z \lrcorner =C_Z $ is a prime divisor and $K_Z+C_Z+B_Z \equiv 0$. These pairs appear in the proof of Theorem~\ref{thm-main}, after running a suitable MMP, starting from some model over a log del Pezzo pair $(X,B)$ (cf.\ Section \ref{s_proof-main-thm}). 

We first show that, in large characteristic, such a pair $(Z, C_Z+B_Z)$ 
admits a log resolution  which lifts to characteristic zero (Proposition~\ref{p_log_resol_lift}). 
Then, in order to show that also the pair $(X, B)$ admits a log resolution which lifts to characteristic zero, 
we  study the behaviour of such a  liftability property under blow-ups (Lemma~\ref{l_blow_up_lift}).  

\medskip 

The following result  is a consequence of Lemma \ref{lem-bounded-family}:

\begin{lemma} \label{l-bounded-family2} Let $I \subseteq (0,1) \cap \mathbb Q$ be a DCC set. 

Then there exists a flat family of log pairs $(\mathcal{X},\mathcal{C}+\mathcal{B})$ over a reduced quasi-projective scheme $T$ over $\Spec \Z$, such that any two dimensional projective plt pair $(X,C+B)$, 
 over any algebraically closed field $k$ of characteristic $p>5$, satisfying
\begin{itemize}
	\item $\lfloor C + B \rfloor = C$, 
	\item the coefficients of $B$ are contained in $I$,
	\item $K_X + C +B \equiv 0$, and
	\item $C$ is ample,
\end{itemize}
is a geometric fibre of $(\mathcal{X},\mathcal{C} + \mathcal{B})$.
\end{lemma}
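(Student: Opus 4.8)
The plan is to deduce the statement from Lemma~\ref{lem-bounded-family}, by passing from a pair $(X,C+B)$ as in the statement to an $\epsilon$-klt log del Pezzo pair with coefficients in a finite set — with $\epsilon$ and the finite set depending only on $I$ — and then reshaping the resulting bounded family. First I would reduce to the case in which $I$ is finite. Given $(X,C+B)$ as in the statement, the pair is log canonical, $K_X+C+B\equiv 0$, and the coefficients of $C+B$ lie in the DCC set $I\cup\{1\}$, so applying Lemma~\ref{l_acc.num} to the structure morphism $\pi\colon X\to\Spec k$ (with $\dim\Spec k=0<2=\dim X$, $\pi_*\mathcal O_X=\mathcal O_{\Spec k}$, and $K_X+C+B\equiv_\pi 0$), and noting that every component of $C+B$ is horizontal over $\Spec k$, we see that the coefficients of $B$ lie in a finite subset $I_0\subseteq I$ depending only on $I$. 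From now on we assume $I$ is finite.

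Next I would perturb. By Proposition~\ref{p_bounded-index} there is $m_0=m_0(I)\in\mathbb Z_{>0}$ with $m_0(K_X+C+B)$ Cartier. Since $(X,C+B)$ is plt with $\lfloor C+B\rfloor=C$, the curve $C$ is normal; writing $K_C+B_C=(K_X+C+B)|_C$, the coefficients of $B_C$, hence (by Lemma~\ref{l_diff}) the determinants $\det\Gamma_q$ at the singular points $q\in X$ lying on $C$, are bounded in terms of $I$ (using Lemma~\ref{l_acc.num} once more, now for $C\to\Spec k$), so $C$ has Cartier index bounded by some $m_1=m_1(I)$. Fix a unit fraction $t=1/M$ with $M=M(I)$ large enough that $1-t\notin I_0$, and set $\Delta=(1-t)C+B$. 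Then $-(K_X+\Delta)=-(K_X+C+B)+tC\equiv tC$ is ample, and since the only log canonical place of the plt pair $(X,C+B)$ is $C$ and its coefficient in $\Delta$ is $1-t<1$ while all other log discrepancies only increase when the coefficient of $C$ is decreased, the pair $(X,\Delta)$ is klt. Let $N=N(I)\in\mathbb Z_{>0}$ be divisible by $m_0$, $m_1$ and by the common denominator of the coefficients in $\{1-t\}\cup I_0$; then $N(K_X+\Delta)=N(K_X+C+B)-Nt\,C$ is Cartier and $N\Delta$ is integral. Pulling back to the minimal resolution $g\colon\tilde X\to X$ and comparing coefficients in $K_{\tilde X}+g^{-1}_*\Delta+\sum_j(1-a_j)E_j=g^*(K_X+\Delta)$ gives $N(1-a_j)\in\mathbb Z$, hence $a_j\geq 1/N$, for every $g$-exceptional divisor $E_j$; together with the fact that the log discrepancy of $(X,\Delta)$ along any prime divisor of $X$ is at least $\min(t,1-\max I_0)$, and that on a surface every divisorial valuation with one-dimensional centre is of this form, this shows that $(X,\Delta)$ is $\epsilon$-klt for $\epsilon=\epsilon(I)>0$. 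Thus $(X,\Delta)$ is an $\epsilon$-klt log del Pezzo pair with coefficients in the finite set $\{1-t\}\cup I_0$.

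Applying Lemma~\ref{lem-bounded-family} to the finite set $\{1-t\}\cup I_0$ and to $\epsilon$ yields a flat family of log pairs $(\mathcal X,\mathcal D)$ over a reduced quasi-projective scheme $T$ over $\Spec\Z$ containing each such $(X,\Delta)$ as a geometric fibre. After stratifying $T$ so that the number of irreducible components of $\mathcal D$ with coefficient exactly $1-t$ is locally constant, let $\mathcal C$ be the reduced sum of those components, so that $\mathcal D=(1-t)\mathcal C+\mathcal B$ with $\mathcal B$ of coefficients in $I_0$ and $\lfloor\mathcal C+\mathcal B\rfloor=\mathcal C$; on the fibre over $(X,C+B)$ the restriction of $\mathcal C$ (resp.\ $\mathcal B$) is $C$ (resp.\ $B$). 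It remains to check that $(\mathcal X,\mathcal C+\mathcal B)$ is a flat family of log pairs; the only non-formal point is that $K_{\mathcal X}+\mathcal C+\mathcal B=(K_{\mathcal X}+\mathcal D)+t\mathcal C$ is $\mathbb Q$-Cartier, which follows — possibly after refining $T$ further — from $K_{\mathcal X}+\mathcal D$ being $\mathbb Q$-Cartier together with the fact that $C$ has Cartier index dividing $m_1$ on every fibre. Then every $(X,C+B)$ as in the statement is a geometric fibre of $(\mathcal X,\mathcal C+\mathcal B)$, as required.

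The two genuinely non-routine points are the uniform $\epsilon$-klt estimate in the middle step, which rests on the bounded Cartier index supplied by Proposition~\ref{p_bounded-index} (and hence on the ACC results invoked there), and the relative $\mathbb Q$-Cartierness of $\mathcal C$ over $T$ in the last step; the reduction to finite $I$ and the verification that $(X,\Delta)$ is klt are routine.
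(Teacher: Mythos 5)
Your proposal is correct and takes essentially the same route as the paper: reduce to finite $I$ via Lemma~\ref{l_acc.num}, bound the Cartier index of $K_X+C+B$ and of $C$ uniformly (the paper compresses this to ``as in the proof of Proposition~\ref{p_bounded-index}''), lower the coefficient of $C$ to produce an $\epsilon$-klt log del Pezzo pair with coefficients in a finite set, invoke Lemma~\ref{lem-bounded-family}, and then restore the coefficient of the marked component. The only cosmetic difference is the perturbation coefficient: the paper uses $\tfrac12 C+B$, while you use $(1-t)C+B$ with $t=1/M$ chosen so that $1-t\notin I_0$, which makes the identification of the component $\mathcal C$ inside the bounded family slightly more transparent.
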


\begin{proof}
As in the proof of Proposition \ref{p_bounded-index}, there exists a positive integer $m$ depending only on $I$ such that 
\[
m(K_X+C+B) \sim 0\qquad\text{and}\qquad mC \text{ is Cartier}
\] 
for any pair $(X,C+B)$ satisfying the assumptions  in the Lemma.

In particular, $2m(K_X+\frac{1}{2}C+B)$ is Cartier, and if $\epsilon\in (0,\frac1{2m})$ then $(X, \frac 1 2 C+B)$ is $\epsilon$-klt. Thus,  Lemma \ref{lem-bounded-family} implies that there exists a flat family $(\mathcal X, \frac{1}{2}\mathcal C+\mathcal B)$ of log pairs such that $(X, \frac{1}{2}C+B)$ is a geometric fibre of
$(\mathcal X, \frac{1}{2}\mathcal C+\mathcal B)$. In particular,  $(\mathcal X, \mathcal C+\mathcal B)$ is the  required family. 
\end{proof}

\begin{proposition} \label{p_log_resol_lift}
Let $I \subseteq (0, 1) \cap \Q$ be a finite set. 

Then there exists a positive integer $p(I)$ which satisfies the following property:

For any two dimensional projective plt pair $(Z,C+B)$  over an algebraically closed field of characteristic $p>p(I)$ such that 
\begin{itemize}
\item $\llcorner C+B\lrcorner=C$, 
	\item $C$ is ample,
	\item $K_Z + C + B \equiv 0$,  and 
\item{the coefficients of $B$ are contained in $I$, }
\end{itemize}
there exists a birational morphism $\pi \colon W \to Z$ such that
\begin{itemize}
	\item $W$ is a smooth projective surface and $\Supp(\pi_*^{-1}(C+B)) \cup \Exc(\pi)$ is simple normal crossing, and 
	\item $(W, \Supp(\pi_*^{-1}(C+B)) \cup \Exc(\pi))$ lifts to characteristic zero over a smooth base.
\end{itemize}
\end{proposition}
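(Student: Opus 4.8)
The plan is to reduce the statement to the boundedness result already established. First I would observe that the hypotheses on $(Z, C+B)$ — two dimensional projective plt, $\llcorner C+B \lrcorner = C$, $C$ ample, $K_Z + C + B \equiv 0$, and coefficients of $B$ in the finite set $I$ — are exactly the hypotheses of Lemma~\ref{l-bounded-family2}. Hence there is a single flat family of log pairs $(\mathcal X, \mathcal C + \mathcal B)$ over a reduced quasi-projective scheme $T$ over $\Spec \Z$, depending only on $I$, such that every such $(Z, C+B)$ is a geometric fibre of $(\mathcal X, \mathcal C + \mathcal B)$. The point is now to run the same noetherian-induction argument as in the proof of Proposition~\ref{prop-bounded-case}: for each irreducible component $T'$ of $T$ whose function field has characteristic zero, one produces a dominant morphism $S \to T'$ with $S$ smooth over $\Spec \Z$ and a projective birational morphism $\mu_S \colon \mathcal V_S \to \mathcal X_S := \mathcal X \times_T S$ such that $\mathcal V_S$ is smooth over $S$ and $\Exc(\mu_S) \cup (\mu_S)_*^{-1}(\Supp(\mathcal C + \mathcal B))$ is simple normal crossing over $S$ (all strata smooth over $S$).

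To build $\mu_S$: replace $S = T'$ by an open subset so that $S$ is smooth over $\Spec \Z$; take a log resolution $\mu_{\overline\xi} \colon V_{\overline\xi} \to X_{\overline\xi}$ of the geometric generic fibre $(X_{\overline\xi}, C_{\overline\xi} + B_{\overline\xi})$; spread it out, after replacing $S$ by a finite cover of an open subset, to a projective birational morphism $\mu_S \colon \mathcal V_S \to \mathcal X_S$ over $S$ whose base change to $\overline\xi$ recovers $\mu_{\overline\xi}$; and then shrink $S$ once more so that smoothness of $\mathcal V_S$ over $S$ and the simple-normal-crossing-over-$S$ property of $\Exc(\mu_S) \cup (\mu_S)_*^{-1}(\Supp(\mathcal C + \mathcal B))$ hold, using that these are open conditions on the base and that they hold at the geometric generic point. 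This is verbatim the argument already carried out for Proposition~\ref{prop-bounded-case}; the only change is that the boundary now includes the extra divisor $\mathcal C$, which causes no difficulty since $\mathcal C$ is one of the prime components tracked by the flat family.

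Finally, by noetherian induction cover $T$ by finitely many separated integral schemes $S_i$ (with $K(S_i)$ of characteristic zero, each equipped with the data above) together with finitely many $S'_j$ with $K(S'_j)$ of characteristic $p_j > 0$, and set $p(I) := \max\{5, p_1, \dots, p_\beta\}$. Then for $p > p(I)$, any $(Z, C+B)$ as in the statement is a geometric fibre over some $S_i$ at a point $s$; pulling back $\mu_{S_i}$ gives a birational morphism $\pi \colon W \to Z$ from a smooth projective surface $W$, with $\Supp(\pi_*^{-1}(C+B)) \cup \Exc(\pi)$ simple normal crossing (being a fibre of a divisor that is SNC over $S_i$), and with $(W, \Supp(\pi_*^{-1}(C+B)) \cup \Exc(\pi))$ lifting to characteristic zero over the smooth base $S_i$ in the sense of Definition~\ref{d-liftable} — the lift being exactly $(\mathcal V_{S_i}, \Exc(\mu_{S_i}) \cup (\mu_{S_i})_*^{-1}(\Supp(\mathcal C + \mathcal B)))$ over $S_i$, together with the point $\Spec k \to S_i$ classifying the fibre.

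The main obstacle — though it is really a matter of careful bookkeeping rather than a genuine difficulty, since the template already exists — is to make sure that a single log resolution of the geometric generic fibre spreads out to a family all of whose fibres are genuine log resolutions with the strata of the exceptional-plus-boundary locus smooth over the base; this is where one must pass to a finite cover of an open subset of $S$ and then shrink, invoking that "being a log resolution" and "being SNC" are constructible-and-open conditions that propagate from the generic point. One should also note that the geometric fibre $W \to Z$ over $\overline k$ is automatically a log resolution of $(Z, C+B)$ because $Z$ itself, being a fibre of $\mathcal X_{S_i}$ and isomorphic to $X$, is normal of dimension two and the construction was arranged to resolve it.
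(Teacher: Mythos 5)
Your proposal is correct and takes exactly the paper's approach: the paper's own proof of Proposition~\ref{p_log_resol_lift} is literally the single sentence "apply the same argument as in the proof of Proposition~\ref{prop-bounded-case} after replacing $(X,B)$ (resp.\ Lemma~\ref{lem-bounded-family}) by $(Z,C+B)$ (resp.\ Lemma~\ref{l-bounded-family2})." Your write-up simply expands that sentence into the full argument, reproducing the noetherian-induction/spreading-out scheme from Proposition~\ref{prop-bounded-case} with the divisor $\mathcal C$ carried along, which is precisely what the cross-reference intends.
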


\begin{proof}
We can apply the same argument as in the proof of Proposition~\ref{prop-bounded-case} 
after replacing $(X, B)$ (resp.\ Lemma~\ref{lem-bounded-family}) 
by $(Z, C+B)$ (resp.\ Lemma~\ref{l-bounded-family2}). 
\end{proof}

\begin{lemma} \label{l_blow_up_lift} 
Let $Z$ be a smooth projective surface  over an algebraically closed field $k$ of characteristic $p>0$ 
and let $D$ be a reduced simple normal crossing divisor on $Z$. 
Let $\pi \colon Z' \to Z$ be the blow up at a point $q$ contained in the singular locus of $D$ and 
let $E$  be the $\pi$-exceptional $(-1)$-curve. 
If $(Z,D)$ lifts to characteristic zero over a smooth base, 
then so does $(Z', \pi_*^{-1}D+E)$. 
\end{lemma}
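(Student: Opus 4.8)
The key idea is that liftability is, essentially by construction, stable under blowing up a point on a lifted stratum, and the hypothesis $q \in \Sing D$ places $q$ on the intersection of (at least) two components of $D$, hence on a \emph{stratum} of $(Z,D)$ that lifts along with everything else. First I would unwind the definition (Definition~\ref{d-liftable}): choose a lift consisting of a scheme $T$ smooth and separated over $\Spec\Z$, a smooth separated $\mathcal X \to T$, effective Cartier divisors $\mathcal D_1,\dots,\mathcal D_r$ on $\mathcal X$ with all scheme-theoretic intersections $\bigcap_{i\in J}\mathcal D_i$ smooth over $T$, and $\alpha\colon\Spec k\to T$ recovering $(Z,D_1,\dots,D_r)$. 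Since $q\in\Sing D$, after reindexing we may assume $q\in D_1\cap D_2$, and because $D$ is simple normal crossing these are the only components through $q$ and they meet transversally there. The locus $\mathcal D_1\cap\mathcal D_2$ is smooth over $T$ of relative codimension $2$, and $q$ defines a section-like point of its special fibre; the closure of $\{q\}$ inside $\mathcal D_1\cap\mathcal D_2$ (or, after shrinking $T$, a relative point) gives a closed subscheme $\mathcal Q\subseteq\mathcal X$ that is smooth over $T$, finite of degree one onto (a neighbourhood in) $T$, and whose fibre over $\alpha$ is $q$.

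Next I would form the blow-up $\widetilde{\mathcal X}:=\Bl_{\mathcal Q}\mathcal X \to T$. Blowing up a regularly embedded (indeed relatively smooth, relative codimension two) center in a scheme smooth over $T$ yields again a scheme smooth over $T$, and its formation commutes with the base change $\alpha\colon\Spec k\to T$ — so the special fibre of $\widetilde{\mathcal X}$ is exactly $Z'=\Bl_q Z$. Let $\mathcal E$ be the exceptional divisor of $\widetilde{\mathcal X}\to\mathcal X$; it is Cartier, smooth over $T$, and restricts over $\alpha$ to the $(-1)$-curve $E$. For the components of $D$ I would take strict transforms: for $i\ge 3$, $q\notin D_i$, so $\widetilde{\mathcal D}_i$ is just the pullback $\pi^*\mathcal D_i$ (after possibly shrinking $T$ so that $\mathcal Q\cap\mathcal D_i=\emptyset$); for $i=1,2$, take $\widetilde{\mathcal D}_i$ to be the strict transform of $\mathcal D_i$, equivalently $\pi^*\mathcal D_i-\mathcal E$, which is an effective Cartier divisor since $\mathcal D_i$ is smooth and contains the center $\mathcal Q$ with multiplicity one along $T$. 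One then checks that $\widetilde{\mathcal D}_i$ is smooth over $T$ and restricts over $\alpha$ to $\pi_*^{-1}D_i$.

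The remaining point is the crucial combinatorial/geometric verification: every scheme-theoretic intersection $\bigcap_{i\in J}\widetilde{\mathcal D}_i$, now also allowing $\mathcal E$ among the divisors, is smooth over $T$. This is a local statement around $\mathcal Q$, where after étale-localizing (permissible over the base, or one argues with completions) $\mathcal X$ looks like $\mathcal D_1=\{x=0\}$, $\mathcal D_2=\{y=0\}$ in coordinates $x,y$ transverse to $\mathcal Q=\{x=y=0\}$, with the other $\mathcal D_i$ away from $\mathcal Q$; the blow-up in the two standard charts has $\mathcal E$, $\widetilde{\mathcal D}_1$, $\widetilde{\mathcal D}_2$ given by coordinate hyperplanes, so all their intersections are smooth coordinate subschemes over $T$, and $\widetilde{\mathcal D}_1\cap\widetilde{\mathcal D}_2=\emptyset$. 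Away from $\mathcal Q$ the blow-up is an isomorphism and nothing changes. Assembling these local pictures with the original data gives a lift of $(Z',\pi_*^{-1}D+E)$ over the (possibly shrunk, still smooth over $\Spec\Z$) base $T$, with the same $\alpha$.

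\textbf{Main obstacle.} The genuinely delicate step is producing the relative center $\mathcal Q$: one must know that the point $q$ of the special fibre $\mathcal D_1\cap\mathcal D_2$ propagates to a subscheme of $\mathcal X$ that is smooth (indeed finite flat of degree one) over an open neighbourhood of $\alpha(\Spec k)$ in $T$, so that blowing it up stays smooth over $T$ and commutes with the base change to $k$. Since $\mathcal D_1\cap\mathcal D_2\to T$ is smooth, after shrinking $T$ this is standard — one spreads out the section — but it is where one must be careful that $T$ may need to be replaced by an open (or étale) neighbourhood, and one should record that this does not affect the conclusion because smoothness over $\Spec\Z$ and the existence of $\alpha$ are preserved. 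Everything else is the local coordinate computation above, which is routine.
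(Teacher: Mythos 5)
Your proposal follows essentially the same route as the paper: spread the blow-up out to a relative blow-up of $\mathcal{Z}$ over $T$ along a relative centre lying in $\mathcal{D}_i\cap\mathcal{D}_j$, observe that everything stays smooth and relatively snc over $T$, and base change back by $\alpha$. The one place you are more careful than the paper is exactly the step you flagged: the paper simply blows up the irreducible component $\mathcal{C}$ of $\mathcal{D}_i\cap\mathcal{D}_j$ through (the image of) $q$, without remarking that $\mathcal{C}\to T$ must have degree one near $\alpha$ for the fibre of the relative blow-up to be $\operatorname{Bl}_q Z$ rather than a blow-up at several points; your insistence on first shrinking or \'etale-localising $T$ (which is harmless, since liftability is an existential statement and $k$ is algebraically closed, so $\alpha$ lifts along any \'etale neighbourhood) so that the centre $\mathcal{Q}$ is an honest section of $\mathcal{D}_1\cap\mathcal{D}_2\to T$ through $q$ is the correct way to justify this. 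The rest — smoothness of the relative blow-up, compatibility with base change, and the local snc check in the two charts — matches the paper's argument.
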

\begin{proof}
We may write $D = \sum_{i=1}^n D_i$, where $D_1,\dots,D_n$ are the irreducible components. 
Since $(Z,D)$ lifts to characteristic zero, there exists a pair $(\mathcal{Z}, \mathcal{D}=\sum_{i=1}^n\mathcal D_i)$ over a smooth separated scheme $T$ over $\Spec \Z$ and a morphism $\alpha\colon \Spec\,k \to T$, 
such that the base changes of the schemes $\mathcal X, \mathcal D_1,\dots,\mathcal D_n$ over $T$ by $\alpha\colon \Spec\,k \to T$ are isomorphic to $X,D_1,\dots,D_n$ respectively and  satisfy the same properties as in Definition~\ref{d-liftable}.

By assumption, there exist unique $D_i, D_j$ such that $q \in D_i \cap D_j$. 
Let $\mathcal C$ be the irreducible component of $\mathcal D_i \cap \mathcal D_j$ 
such that $q \in \mathcal C$. 
Let $\overline{\pi} \colon \mathcal{Z}' \to \mathcal{Z}$ be the blow-up along $\mathcal C$. 
Then, $\mathcal{Z}'$ is a lift of $Z'$. Indeed, since $\mathcal{D}_i \cap \mathcal{D}_j$ is smooth over $T$, 
so is $\mathcal C$. 
It follows that 
$\mathcal{Z}'$ is smooth over $T$ and has connected fibres over $T$ 
(e.g.\ see the proof of 
\cite[Section 8, Theorem 1.19]{liu02}). 
By [ibid], the exceptional divisor $\mathcal E$ of $\overline \pi$ is a $\mathbb P^1$-bundle over 
$\mathcal C$. 
Thus, all the assumptions in Definition~\ref{d-liftable} hold true for the pairs 
$(Z', \pi_*^{-1}D+E)$ and 
 $(\mathcal{Z}', \overline{\pi}_*^{-1}\mathcal D+\mathcal E)$ over $T$.
\end{proof}


\begin{remark}\label{r_terminalisation}
By Lemma~\ref{l_blow_up_lift}, 
we may assume that 
the log resolution appearing in (2) of Theorem~\ref{thm-main} 
factors through the terminalisation $\eta \colon W\to X$  of $(X, B)$, i.e.\ if we write
\[
K_W+B_W=\eta^*(K_X+B),
\] 
then $(W,B_W)$ is terminal.
\end{remark}

\section{Non-$\epsilon$-klt case and proof of Theorem~\ref{thm-main}}\label{s_proof-main-thm}

In Section \ref{section-bounded-case}, we showed that $\epsilon$-klt log del Pezzo surfaces 
form a bounded family. 
The goal of this section is to study log del Pezzo surfaces $(X, B)$ over an algebraically closed field $k$, which are not $\epsilon$-klt for $0 < \epsilon \ll 1$. In particular, this yields a proof of 
our main Theorem (Theorem~\ref{thm-main}). 

Our method consists of constructing a log canonical pair $(Z,C_Z +B_Z)$ from $(X,B)$, such that $\lfloor C_Z +B_Z \rfloor = C_Z$ is prime and $-(K_Z + C_Z + B_Z)$ is nef.
If $K_Z+C_Z+B_Z$ is not numerically trivial, then, by using  global $F$-adjunction (see Subsection \ref{subsection_global-f-adjunction}), we show that $(Z,C_Z+B_Z)$ is globally $F$-split, provided that  the characteristic of $k$ is large enough. This in turn implies that  $(X,B)$ is globally $F$-regular.

Unfortunately, if $K_Z+C_Z+B_Z$ is numerically trivial, then $(Z,C_Z+B_Z)$ does not need to be globally $F$-split \cite[Theorem 1.1]{CTW15a}. Thus, we need to consider  two different cases, depending on whether  $(Z, C_Z+B_Z)$ is plt or not. 
If the pair is plt (resp.\ non-plt), then we show that  condition (2) (resp.\  condition (1)) 
of Theorem \ref{thm-main} holds.

\begin{proposition}\label{p_cases}
Let $I \subseteq (0, 1)\cap \mathbb Q$ be a finite set. 

Then there exists a rational number $\epsilon(I)>0$  which satisfies the following property:

Let $(X, B)$ be a two dimensional projective klt pair over an algebraically closed field  such that 
\begin{itemize}
\item $-(K_X+B)$ is ample,
\item  $(X,B)$ is not $\epsilon(I)$-klt, and 
\item  the coefficients of $B$ are contained in $I$.
\end{itemize}
Then there exist 
birational morphisms 
\[
\begin{CD}
Y @>g>> Z\\
@VVf V\\
X
\end{CD}
\]
of projective klt surfaces such that if we denote $C = \Exc(f)$, $C_Z = g_* C$, $B_Y = f^{-1}_* B$ and $B_Z = g_*B_Y$, then $C$ is a prime divisor, $C_Z \neq 0$, 
\begin{enumerate}
\item if $a=a(C,X,B)$ so that  
\[
K_Y + (1-a) C + B_Y = f^*(K_X + B),
\]
then $a\in (0, \epsilon(I)]$,  

\item there exists a $\mathbb Q$-divisor $B_Y^+ \geq B_Y$ such that 
\[
K_Y+C+B_Y^+=g^*(K_Z+C_Z+B_Z)
\]
and 
\[
\Exc(g) = \Supp (B_Y^+-B_Y),
\]
\item $-(K_Z+C_Z+B_Z)$ is nef, 
\item $(Z, C_Z+B_Z)$ is log canonical, and 
\item $(Z, bC_Z+B_Z)$ is klt and $-(K_Z+bC_Z+B_Z)$ is ample for some rational number $b \in (1-a, 1)$. 
\end{enumerate}

\end{proposition}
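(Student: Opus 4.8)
The plan is to extract from $X$ a prime divisor $C$ of very small log discrepancy and then run a $-(K_Y+C+B_Y)$-minimal model program starting from the extraction; the number $\epsilon(I)$ will be taken small enough both to keep the relevant pairs log canonical and to exclude a Mori fibre space at the end. To this effect I first fix $\epsilon(I)$ using two instances of the ACC philosophy. By ACC for log canonical thresholds on surfaces (\cite[Theorem 1.1]{alexeev93}), the set of all $\lct((Y_0,B_0);C_0)$, where $Y_0$ is a normal surface, $C_0$ a reduced divisor and $(Y_0,B_0)$ klt with $\operatorname{coeff}(B_0)\subseteq I$, is an ACC set and therefore misses some interval $(1-\epsilon_1,1)$. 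By ACC for numerically trivial log canonical pairs (\cite[Theorem 5.3]{alexeev93}; cf.\ Lemma~\ref{l_acc.num}), the set of coefficients of a horizontal component occurring in a two dimensional log canonical pair $(Z_0,cC_0+B_0)$ with $\operatorname{coeff}(B_0)\subseteq I$ and $K_{Z_0}+cC_0+B_0$ numerically trivial over a curve or over a point is likewise an ACC set (to invoke Lemma~\ref{l_acc.num} one adjoins to $I$ a hypothetical increasing sequence of such coefficients, which together form a DCC set), and so misses some interval $(1-\epsilon_2,1)$. Set $\epsilon(I)$ to be any rational number in $(0,\min\{\epsilon_1,\epsilon_2\})$.

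Since $(X,B)$ is klt but not $\epsilon(I)$-klt, some divisorial valuation over $X$ has log discrepancy in $(0,\epsilon(I)]$, and by the minimal model program for surfaces (e.g.\ \cite{tanaka12a}) I may extract it: there is a projective birational morphism $f\colon Y\to X$ from a $\mathbb Q$-factorial klt surface with $\Exc(f)=C$ prime, $B_Y:=f^{-1}_*B$, $a:=a(C,X,B)\in(0,\epsilon(I)]$ and $K_Y+(1-a)C+B_Y=f^*(K_X+B)$; this gives (1). As $(X,B)$ is log del Pezzo, so is $(Y,\Delta_Y)$ for a suitable effective $\mathbb Q$-divisor $\Delta_Y$, so I may run a $D$-MMP on $Y$ for any $\mathbb Q$-Cartier $\mathbb Q$-divisor $D$. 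I then run a $-(K_Y+C+B_Y)$-MMP with scaling of $C$, obtaining $g\colon Y\to Z$ with $C_Z:=g_*C$ and $B_Z:=g_*B_Y$. At every step the contracted curve $\Gamma_i$ on $Y_i$ satisfies $(K_{Y_i}+C_i+B_i)\cdot\Gamma_i>0$, so the negativity lemma gives $\phi_i^*(K_{Y_{i+1}}+C_{i+1}+B_{i+1})=K_{Y_i}+C_i+B_i+e_i\Gamma_i$ with $e_i>0$ (here $C_i,B_i$ are the strict transforms): hence the log discrepancies of $(Y_\bullet,C_\bullet+B_\bullet)$ only decrease, and in particular $\Gamma_i\neq C_i$ --- otherwise $a(C_i,Y_{i+1},C_{i+1}+B_{i+1})=-e_i<0$ --- so $C_Z\neq 0$. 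Property (2) then holds with the standard negativity-lemma correction $B_Y^+:=g^{-1}_*B_Z+\sum_E\big(1-a(E,Z,C_Z+B_Z)\big)E$, the sum over the $g$-exceptional prime divisors $E$, for which $B_Y^+\ge B_Y$, $\Supp(B_Y^+-B_Y)=\Exc(g)$ and $K_Y+C+B_Y^+=g^*(K_Z+C_Z+B_Z)$. Tracking $bC+B_Y$ through the same contractions, the negativity lemma now produces correction terms of the opposite sign, so Nakai--Moishezon preserves ampleness of $-(K_{Y_i}+bC_i+B_i)$ and klt-ness of $(Y_i,bC_i+B_i)$ throughout, yielding some $b\in(1-a,1)$ for which $(Z,bC_Z+B_Z)$ is klt and $-(K_Z+bC_Z+B_Z)$ is ample, which is (5).

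It remains to see (4) and (3). Running with scaling of $C$ means the $i$-th contracted ray is $-(K_{Y_i}+s_iC_i+B_i)$-trivial for the scaling value $s_i\in[1-a,1]$ at which nef-ness of $-(K_Y+sC+B_Y)$ fails, hence, as $C_i\cdot\Gamma_i\ge0$ and $s_i\ge1-a$, it is $(K_{Y_i}+(1-a)C_i+B_i)$-non-positive; since $-(K_{Y_0}+(1-a)C_0+B_0)=-f^*(K_X+B)$ is nef and $(Y_0,(1-a)C_0+B_0)$ is klt, the negativity lemma shows $(Y_i,(1-a)C_i+B_i)$ stays klt, so $(Y_i,B_i)$ is klt and $\lct((Y_i,B_i);C_i)>1-a\ge1-\epsilon(I)>1-\epsilon_1$, which by the choice of $\epsilon_1$ forces this threshold to be $\ge1$, i.e.\ $(Y_i,C_i+B_i)$ is log canonical, giving (4) for the final model. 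Finally, the program terminates either with $-(K_Z+C_Z+B_Z)$ nef --- which is (3) --- or with a Mori fibre contraction $\pi\colon Z\to T$ onto a curve or a point at some scaling value $s^*<1$, so that $\rho(Z/T)=1$, $K_Z+s^*C_Z+B_Z\equiv_\pi 0$ and $-(K_Z+C_Z+B_Z)$ is $\pi$-anti-ample; in the latter case a positivity (bigness) argument gives $s^*>1-a$, so that $(1-s^*)C_Z\cdot F=(K_Z+C_Z+B_Z)\cdot F>0$ on a fibre $F$ and $C_Z$ is $\pi$-horizontal, whence $s^*\in(1-a,1)\subseteq(1-\epsilon_2,1)$ is the coefficient of a horizontal component of the $\pi$-numerically trivial log canonical pair $(Z,s^*C_Z+B_Z)$ with $\operatorname{coeff}(B_Z)\subseteq I$, contradicting the choice of $\epsilon_2$. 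Hence this case does not occur. I expect the main difficulty to be precisely this two-fold calibration of $\epsilon(I)$ --- making one ACC bound propagate log canonicity along the MMP while a second, slightly self-referential, ACC for log Calabi--Yau pairs excludes the fibration --- together with checking that the scaling of $C$ is genuinely compatible with the sub-boundary $(1-a)C+B_Y$, so that the threshold inequality $\lct((Y_i,B_i);C_i)>1-a$ really holds at every stage.
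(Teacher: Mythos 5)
Your proposal follows the paper's proof quite closely: you extract a divisor $C$ of small log discrepancy, run a $-(K_Y+C+B_Y)$-MMP with scaling of $C$, obtain (2) and (5) from the negativity lemma along the run, get (4) from ACC for the log canonical threshold, and exclude a $-(K_Z+C_Z+B_Z)$-negative Mori fibre space via the ACC for numerically $\pi$-trivial log canonical pairs (Lemma~\ref{l_acc.num}). The paper chooses $\epsilon(I)$ "lazily" (shrinking it as needed), whereas you calibrate $\epsilon_1,\epsilon_2$ up front; both are equivalent. Your argument that $C_Z\neq 0$ (via a would-be negative discrepancy) is a slight variant of the paper's ($C_i$ being $g_i$-ample), and your derivation of (4) via klt-ness of $(Y_i,(1-a)C_i+B_i)$ is close to the paper's use of (5); both are fine.

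One genuine, if small, gap: you set $\epsilon(I)\in(0,\min\{\epsilon_1,\epsilon_2\})$ but never impose $\epsilon(I)<1-c$ for every $c\in I$. Without this, the hypothesis "$(X,B)$ is not $\epsilon(I)$-klt" could be witnessed solely by a component of $B$ itself (whose log discrepancy is $1-c\leq\epsilon(I)$), in which case there is no \emph{exceptional} divisorial valuation to extract, so the step "there is a projective birational morphism $f\colon Y\to X$ with $\Exc(f)=C$ prime and $a(C,X,B)\leq\epsilon(I)$" is unjustified. The paper imposes $\epsilon(I)<1-c$ for all $c\in I$ at the outset precisely to rule this out. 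Adding this constraint to your choice of $\epsilon(I)$ closes the gap, and the rest of your argument then goes through.
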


\begin{proof} 
Pick any rational number $\epsilon(I)\in (0,1)$ such that $\epsilon <1-c$ for any $c\in I$.  We will replace $\epsilon(I)$ by a smaller number, if necessary. 

Let $(X, B)$ be a log pair which satisfies the assumptions in the Proposition. 
Let $C$ be an exceptional  divisorial valuation such that $a := a(C, X,B)$ is minimal and let $f \colon Y \to X$ be a projective birational morphism such that $\Exc(f)= C$. By assumption, $a\in (0,\epsilon(I)]$. Thus, (1) holds. 

Since $-(K_X+B)$ is ample, we can find a rational number $b\in (1-a,1)$ such that  
$(Y, bC+B_Y)$ is klt, and $-(K_Y+bC+B_Y)$ is ample. We run a $-(K_Y+C+B_Y)$-MMP with scaling of $C$:
\begin{multline*}
(Y, C+B_Y)=:(Y_0, C_0+B_0) \xrightarrow{\hphantom{-}g_0\hphantom{1}} (Y_1, C_1+B_1) \xrightarrow{\hphantom{-}g_1\hphantom{1}} \cdots \\ 
\cdots \xrightarrow{g_{n-1}} (Y_n, C_n+B_n)=:(Z, C_Z+B_Z),
\end{multline*}
where $C_i := (g_{i-1})_*C_{i-1}$ and $B_i := (g_{i-1})_*B_{i-1}$. 

By definition of the MMP with scaling, we get a sequence of rational numbers 
\[b<b_0 \leq b_1 \leq \cdots \leq b_{n-1}\]
such that 
\[
b_i := \max\{t>b\mid -(K_{Y_i}+tC_i+B_i) \text{ is nef}\}
\]
and $g_i\colon Y_i\to Y_{i+1}$ is a birational morphism, such that $\rho(Y_i/Y_{i+1})=1$ and 
\[
K_{Y_i}+b_iC_i+B_i=g_i^*(K_{Y_{i+1}}+b_iC_{i+1}+B_{i+1}).
\]

Since $-(K_{Y_i}+bC_i+B_i)$ is ample for any $i=0,\dots,n$, each step of 
the MMP is $(K_{Y_i}+bC_i+B_i)$-negative, and in particular $(Y_i, bC_i+B_i)$ is klt. 
Thus, (5) holds. 

Since the coefficients of $B_Z$ belong to the finite set $I$ and $(Z, bC_Z+B_Z)$ is klt, 
by ACC for the log canonical threshold in dimension two \cite[Theorem 1.1]{alexeev93}, after possibly replacing $\epsilon(I)$ by a smaller value depending only on $I$, we may assume that 
the pair $(Z, C_Z + B_Z)$ is log canonical, hence (4) holds. 

Since $C_i$ is $g_i$-ample for all $i=0,\dots,n-1$, it follows that $C_Z \neq 0$. 
We may write 
\[
g^*(K_Z+C_Z+B_Z)=K_Y+C+B_Y+R
\]
for some $g$-exceptional $\mathbb Q$-divisor $R$ on $Y$. 
Since the MMP is $-(K_Y+C+B_Y)$-negative, it follows that $R\ge 0$ and the support of $R$ coincides with the exceptional locus of $g$. In particular, if $B^+_Y=B_Y+R$, then    (2) holds. 

Thus, it is enough to show (3). 
We assume by contradiction that $-(K_Z+C_Z+B_Z)$ is not nef. 
Thus,  there exists a $-(K_Z+C_Z+B_Z)$-negative Mori fibre space  $\pi\colon Z \to T$ and,
in particular, 
\[
(K_Z+C_Z+B_Z) \cdot F>0
\]
for any curve $F$ contained in a fibre of $\pi$. 
On the other hand, since $-(K_Z+bC_Z+B_Z)$ is ample, it follows that 
\[
(K_Z+bC_Z+B_Z) \cdot F<0.
\]
Thus, $C_Z$ is $\pi$-horizontal and we can find a rational number $b'\in (b,1)$ such that
\[
K_Z+b'C_Z+B_Z \equiv_{\pi} 0.
\]
Since $1-\epsilon(I)<b<b'<1$, 
after possibly replacing $\epsilon(I)$ by a smaller number depending only on $I$, we get a contradiction by Lemma \ref{l_acc.num}. Thus, (3) holds. 
\end{proof}

Before we proceed with the proof of Theorem \ref{thm-main}, we recall the following criterion for  global $F$-regularity:

\begin{lemma}\label{l_gfr}
Let $(Z,C+B)$ be a log pair, where $\lfloor C + B \rfloor = C$ is a reduced divisor. Let $H$ be an effective $\mathbb Q$-divisor on $Z$ such that $Z\, \backslash\, \Supp (H + C)$ is affine. Assume that
\begin{enumerate}
\item $(Z,B)$ is strongly $F$-regular, and
\item there exists $\delta > 0$ such that $(Z,C + B + \delta H)$ is globally $F$-split.
\end{enumerate}
Then $(Z, \lambda C+B)$ is globally $F$-regular for every $\lambda \in [0,1)$.
\end{lemma}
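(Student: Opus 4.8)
The strategy is to combine the two hypotheses to produce, for every $\lambda\in[0,1)$, a single splitting of a high Frobenius power that simultaneously witnesses strong $F$-regularity along the affine locus and the boundary along $C+B$. Concretely, fix $\lambda\in[0,1)$ and an effective divisor $D$ on $Z$; I want to find, for some $e\gg 0$, a splitting $\phi\colon F^e_*\mathcal O_Z(\rupJakub{(p^e-1)(\lambda C+B)}+D)\to\mathcal O_Z$ that is surjective (evaluates to $1$). Since $Z\setminus\Supp(H+C)$ is affine, any effective $D$ is supported away from some affine open $U$ after adding a suitable multiple of $H+C$; so it suffices to handle $D$ supported on $\Supp(H+C)$, i.e.\ to dominate $\rupJakub{(p^e-1)(\lambda C+B)}+D$ by $(p^e-1)(C+B+\delta H)$ for $e$ large. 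This is the usual reduction in the theory of global $F$-regularity (cf.\ \cite[Theorem 3.9]{schwedesmith10}).

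\textbf{Key steps.} First, choose $\delta>0$ as in hypothesis (2), so that $(Z,C+B+\delta H)$ is globally $F$-split; by Lemma~\ref{l_basic-perturbation} (applied after a small further perturbation if needed, using that global sharp $F$-splitting is preserved) we may assume $(p^{e_0}-1)(K_Z+C+B+\delta H)\sim 0$ for some $e_0>0$, so the corresponding map $\phi^Z_{C+B+\delta H}\colon F^{e_0}_*\mathcal O_Z\to\mathcal O_Z$ is surjective. Second, iterating this splitting (composing Frobenius pushes) gives surjective maps for every multiple $e$ of $e_0$; by the standard argument these assemble, for the purposes of the global $F$-regularity criterion, into the statement that for every effective $D$ there is $e\gg0$ with $\phi^Z_{C+B+\delta H}$ twisted by $D$ still surjective, \emph{provided} $D$ is supported on $\Supp(\delta H)$. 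Third, to upgrade from $C+B+\delta H$ to $\lambda C+B$ with an \emph{arbitrary} effective $D$: since $Z\setminus\Supp(H+C)$ is affine, for any prime divisor $\Gamma$ not in $\Supp(H+C)$ there is an effective divisor linearly equivalent to a multiple of a combination of components of $H+C$ whose support contains $\Gamma$ (the affine complement lets us clear denominators); thus after adding such a divisor we reduce to $D$ supported on $\Supp(H+C)$. The coefficient of $C$ drops from $1$ in $C+B+\delta H$ to $\lambda<1$ in $\lambda C+B$, which only \emph{relaxes} the divisorial condition along $C$, so the splitting of $C+B+\delta H$ restricts to a splitting witnessing $(Z,\lambda C+B)$ along $C$; meanwhile hypothesis (1), strong $F$-regularity of $(Z,B)$, supplies the local splittings needed away from $C$ to absorb the extra $D$ along the affine locus. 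Combining, $(Z,\lambda C+B)$ satisfies the global $F$-regularity criterion of \cite[Theorem 3.9]{schwedesmith10}.

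\textbf{Main obstacle.} The delicate point is the bookkeeping that makes a \emph{single} integer $e$ (and a single splitting) handle simultaneously (i) the arbitrary effective divisor $D$ along the affine part, which is controlled by (1), and (ii) the boundary $\lambda C+B$ near $C$, controlled by the $\delta H$-perturbed splitting from (2); one must check that the ceilings $\rupJakub{(p^e-1)(\lambda C+B)}$ stay dominated by $(p^e-1)(C+B+\delta H)+(\text{stuff supported on }\Supp(H+C))$ uniformly, which uses $\lambda<1$ crucially (it fails at $\lambda=1$ precisely because then there is no room for the $\delta H$ slack along $C$). I would organize this by first proving the reduction "$D$ may be assumed supported on $\Supp(H+C)$'' using affineness, and then invoking the perturbed global $F$-splitting together with strong $F$-regularity of $(Z,B)$ to conclude, exactly paralleling the proof of \cite[Theorem 3.9]{schwedesmith10}.
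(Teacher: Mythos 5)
Your proposal ultimately invokes the same result the paper does — the proof in the paper is a one-line citation of \cite[Theorem 3.9]{schwedesmith10} — but you wrap that citation inside an attempted re-derivation of the theorem, which is both redundant and introduces a loose end. The cleanest argument is simply to verify the hypotheses of \cite[Theorem 3.9]{schwedesmith10} directly for the pair $(Z,\lambda C+B)$ with auxiliary divisor $D:=H+C$: the complement $Z\setminus \Supp(H+C)$ is affine by hypothesis; $(Z,\lambda C+B)$ is strongly $F$-regular on $Z\setminus\Supp(H+C)$ because it agrees there with the strongly $F$-regular pair $(Z,B)$ (the component $C$ lies in $\Supp(H+C)$); and for any rational $\epsilon$ with $0<\epsilon<\min(\delta,\,1-\lambda)$ one has $\lambda C+B+\epsilon(H+C)=(\lambda+\epsilon)C+B+\epsilon H\le C+B+\delta H$, so $(Z,\lambda C+B+\epsilon(H+C))$ is globally sharply $F$-split because global sharp $F$-splitting descends under decreasing the boundary. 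That is the whole verification; no re-proof of the bookkeeping in Schwede--Smith is needed.

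The loose end in your version: you invoke Lemma~\ref{l_basic-perturbation} to arrange $(p^{e_0}-1)(K_Z+C+B+\delta H)\sim 0$, but that lemma produces a \emph{new} perturbing divisor whose support has no reason to lie in $\Supp(H+C)$; after adding it, the affineness of the complement — the lever that lets you absorb an arbitrary effective $D$ — is no longer available, and you would also have to re-establish the simple-normal-crossing/plt control near the boundary. Since you fall back on \cite[Theorem 3.9]{schwedesmith10} anyway, this normalization step is unnecessary and should simply be dropped: the theorem is stated so as to handle an arbitrary globally sharply $F$-split perturbation without first normalizing the Cartier index.
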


\begin{proof}
This follows from \cite[Theorem 3.9]{schwedesmith10}.
\end{proof}

We now prove Theorem \ref{thm-main}.

\begin{proof}[Proof of Theorem \ref{thm-main}]
Let $\epsilon(I)>0$ be the positive rational number as in Proposition \ref{p_cases} and 
let $p(I):=p(I,\epsilon(I))$ be the positive integer as in Proposition \ref{prop-bounded-case}. 
We will  replace $\epsilon(I)$ (resp.\ $p(I)$)  by a smaller (resp.\ larger) value if necessary. 

Let $(X, B)$ be a log del Pezzo pair over an algebraically closed field of characteristic $p>p(I)$. 
If $(X, B)$ is $\epsilon(I)$-klt, then Proposition \ref{prop-bounded-case} implies (2) of Theorem \ref{thm-main}. 
Thus, we may assume that $(X, B)$ is not $\epsilon(I)$-klt. 
By Proposition \ref{p_cases}, there exist birational morphisms 
\[
\begin{CD}
Y @>g>> Z\\
@VVf V\\
X
\end{CD}
\]
which satisfy the properties of Proposition~\ref{p_cases}. 

\medskip
\textbf{Claim:} 
After possibly replacing $p(I)$ by a larger value depending only on $I$, the following  hold: 
\begin{enumerate}
\item[(a)]{$(p^e-1)B_Z$ is a $\mathbb Z$-divisor for some $e \in \mathbb Z_{>0}$,}
\item[(b)]{$(Z, C_Z+B_Z)$ is sharply $F$-pure and 
$(Z, \lambda C_Z+B_Z)$ is strongly $F$-regular for any $\lambda \in [0, 1)$, and}
\item[(c)]{if $(Z, \lambda C_Z+B_Z)$ is globally $F$-regular for any $\lambda \in [0, 1)$, 
then $(X, B)$ is globally $F$-regular.}
\end{enumerate} 

We now prove the Claim. 
(a) is clear. 
 (4) and (5) of Proposition \ref{p_cases} imply that $(Z, \lambda C_Z + B_Z)$ is klt for any $\lambda \in [0, 1)$. Thus, after possibly replacing  $p(I)$ by a larger number depending only on $I$, by \cite[Theorem 1.1]{cgs14}, we may assume that $(Z,\lambda C_Z + B_Z)$ is strongly $F$-regular. 
By \cite[Proposition 3.3]{CTW15a}, $(Z, C_Z+B_Z)$ is $F$-pure, hence it is sharply $F$-pure by (a). 
Thus  (b) holds. 

We now show (c). 
Assume that $(Z, \lambda C_Z + B_Z)$ is globally $F$-regular for any $\lambda \in (0, 1)$. 
By (2) of Proposition \ref{p_cases}, if $\lambda \in (0, 1)$ is a rational number which is sufficiently close to $1$, then 
\[
K_Y+\lambda C+B'_Y =g^*(K_Z+\lambda C_Z+B_Z),
\]
for some $\mathbb Q$-divisor $B'_Y\ge B_Y$. 
By \cite[Proposition 2.11]{hx13}, it follows that  $(Y, \lambda C+B_Y)$ is globally $F$-regular for some $\lambda \in (0, 1)$. 
By \cite[Lemma 2.2]{CTW15a}, it follows that $(X, B)$ is  globally $F$-regular as well. Thus,  
 (c) holds and this completes the proof of the Claim.

\medskip

By (3) of Proposition \ref{p_cases}, it follows that $-(K_Z+C_Z+B_Z)$ is  nef. By \cite[Theorem 1.2]{tanaka12}, it is semi-ample. 
In order to prove the Theorem, we consider the following four cases separately: 
\begin{itemize}
\item{$\kappa(Z, -(K_Z+C_Z+B_Z))=2$,}
\item{$\kappa(Z, -(K_Z+C_Z+B_Z))=1$,}
\item{$\kappa(Z, -(K_Z+C_Z+B_Z))=0$ and $(Z, C_Z+B_Z)$ is not plt,}
\item{$\kappa(Z, -(K_Z+C_Z+B_Z))=0$ and $(Z, C_Z+B_Z)$ is plt.}
\end{itemize}

If  one of the first three cases  (resp.\ if the last case) holds, 
then we will show that  (1) (resp.\ (2)) of Theorem \ref{thm-main} holds. 
If $C_Z$ is normal, then we define $(C_Z,B_{C_Z})$ by adjunction
\[
(K_Z + C_Z + B_Z)|_{C_Z} = K_{C_Z} + B_{C_Z}.
\]

\medskip 

\textbf{Case 1:} $\kappa(Z, -(K_Z+C_Z+B_Z))=2$.

Since $-(K_Z+C_Z+B_Z)$ is semi-ample,  
we can contract the curves $\Gamma$ with $(K_Z+C_Z+B_Z)\cdot \Gamma=0$. 
In this contraction, $C_Z$ is not contracted, because if $(K_Z+C_Z+B_Z) \cdot C_Z=0$, 
then $C_Z^2<0$, which contradicts 
$$(1-b)C_Z^2=(K_Z+C_Z+B_Z) \cdot C_Z-(K_Z+bC_Z+B_Z) \cdot C_Z >0.$$
Thus, by \cite[Proposition 2.11]{hx13}, we may assume that $-(K_Z+C_Z+B_Z)$ is ample. 
By \cite[Theorem 3.19]{tanaka12} and 
$$(K_Z+C_Z) \cdot C_Z\leq (K_Z+C_Z+B_Z) \cdot C_Z<0,$$
it follows that $C_Z \simeq \mathbb P^1$. 

By Lemma \ref{l_perturbation}, there exists an effective $\mathbb{Q}$-divisor $H$
such that $(Z, C_Z+B_Z+H)$ is log canonical 
and the Cartier index of $K_Z + C_Z + B_Z + H$ is not divisible by $p$. 
After replacing $H$ by $H+H'$ for some ample effective $\mathbb Q$-divisor $H'$, 
we may assume that $Z \setminus\Supp H$ is affine. After
replacing $H$ by a smaller multiple, we may assume that $-(K_Z + C_Z + B_Z + H)$ is ample.

We now show that $(C_Z, B_{C_Z}+H|_{C_Z})$ is globally $F$-split. 
If $(C_Z,B_{C_Z}+H|_{C_Z})$ is not klt, then Lemma \ref{l_gladj} implies that  $(C_Z, B_{C_Z} + H|_{C_Z})$ is globally $F$-split. 
If $(C_Z,B_{C_Z}+H|_{C_Z})$ is klt, then by \cite[Corollary 4.1]{cgs14}, after possibly replacing $p(I)$ by a larger value depending only on $I$,  we may assume that $(C_Z,B_{C_Z})$ is globally $F$-regular. After possibly replacing $H$ by a smaller multiple again, we may assume that $(C_Z, B_{C_Z} + H|_{C_Z})$ is globally $F$-split.

By Lemma \ref{l_gladj}, it follows that $(Z, C_Z+B_Z+H)$ is globally $F$-split. 
 Thus, (b) of the Claim and Lemma \ref{l_gfr} imply that $(Z, \lambda C_Z+B_Z)$ is globally $F$-regular for any $\lambda\in [0,1)$. By (c) of the Claim,  it follows that $(X, B)$ is globally $F$-regular.

\medskip 

\textbf{Case 2:} $\kappa(Z, -(K_Z+C_Z+B_Z))=1$.

Since $-(K_Z+C_Z+B_Z)$ is semi-ample, there exist a morphism  $\pi \colon Z \to T$ onto a smooth projective curve $T$ and 
 an ample $\mathbb Q$-divisor $A$ on $T$ such that $\pi_*\mathcal O_Z=\mathcal O_T$ and
$-(K_Z+C_Z+B_Z) \equiv \pi^*A$.  
By (5) of Proposition~\ref{p_cases} and  by \cite[Proposition 2.11]{hx13}, 
after running a $(K_Z + bC_Z + B_Z)$-MMP over $T$, we may assume that 
$\pi \colon Z \to T$ is a $(K_Z + bC_Z + B_Z)$-negative Mori fibre space. In particular, $C_Z$ is $\pi$-horizontal. 
By \cite[Theorem 3.19]{tanaka12} and 
$$(K_Z+C_Z) \cdot C_Z\leq (K_Z+C_Z+B_Z) \cdot C_Z<0,$$
it follows that $C_Z \simeq \mathbb P^1$. 

By (a) of the Claim, we can apply Lemma \ref{l_mfs-perturbation}, and 
there exists an effective $\mathbb Q$-divisor $E$, whose support is contained in some fibres of $\pi$,  and such that $(Z, C_Z+B_Z+E)$ is log canonical and the Cartier index of $K_Z + C_Z + B_Z + E$ is not divisible by $p$. 
After possibly replacing $E$ by a smaller multiple, we may assume that 
$-(K_Z+C_Z+B_Z+E) \equiv \pi^*A'$ for some ample $\mathbb Q$-divisor $A'$ on $T$.
In particular, $-(K_{C_Z}+B_{C_Z}+E|_{C_Z})$ is ample.

By the same argument as in Case 1, $(C_Z, B_{C_Z}+E|_{C_Z})$ is globally $F$-split. 
Thus, by Lemma~\ref{lem-MFS}, $(Z, C_Z+B_Z+E)$ is globally $F$-split. 
Again by the same argument as in Case 1, it follows that $(X, B)$ is globally $F$-regular.

\medskip 

\textbf{Case 3:} $\kappa(Z, -(K_Z+C_Z+B_Z))=0$ and $(Z, C_Z+B_Z)$ is not plt.

By Proposition \ref{p_bounded-index}, after possibly replacing $p(I)$ by a larger value depending only on $I$, we may assume that 
$(p^e-1)(K_Z + C_Z + B_Z)$ is Cartier for some positive integer $e$. 
Thus, Lemma \ref{l_pic1-non-plt} implies that $(Z, C_Z+B_Z)$ is globally $F$-split. 
By (5) of Proposition~\ref{p_cases}, it follows that   $-(K_Z + bC_Z + B_Z)$ is ample, and in particular also  $C_Z$ is ample. 
Thus, (b) of the   Claim  and Lemma \ref{l_gfr} imply that $(Z, \lambda C_Z+B_Z)$ is globally $F$-regular for any $\lambda \in [0,1)$. 
By (c) of the Claim, it follows that  $(X, B)$ is globally $F$-regular.

\medskip 

\textbf{Case 4:} $\kappa(Z, -(K_Z+C_Z+B_Z))=0$ and $(Z, C_Z+B_Z)$ is  plt.

By Proposition~\ref{p_log_resol_lift}, 
after possibly replacing $p(I)$ by a larger number depending only on $I$, 
there exists a log resolution $\pi \colon W \to Z$ of $(Z, C+B)$ 
such that 
\[
(W, D_W:=\Supp(\pi_*^{-1}(C+B)) \cup \Exc(\pi))
\] 
lifts to characteristic zero over a smooth base. 
We may write 
\[
K_W+D'_W=\pi^*(K_Z+C_Z+B_Z).
\]
for some $\mathbb Q$-divisor $D'_W$ on $W$. 
Note that $\Supp D'_W \subseteq \Supp D_W$. 
By (2) of Proposition~\ref{p_cases}, 
the birational morphism $g\colon Y \to Z$ only extracts prime divisors $E$ such that 
$a(E, W, D'_W)=a(E, Z, C_Z+B_Z)<1$. 
Thus, there exists a sequence of blow-ups 
\[
\varphi\colon V:=W_m\xrightarrow{\varphi_{m-1}} \cdots \xrightarrow{\varphi_1} W_1\xrightarrow{\varphi_0} W_0:=W
\] 
which satisfies the following properties: 
\begin{itemize}
\item{We define a divisor $D_i$ on $W_i$ inductively by  
\begin{eqnarray*}
D_0&:=&D_W\\
K_{W_{i+1}}+D_{i+1}&=&\varphi_i^*(K_{W_{i}}+D_{i}).
\end{eqnarray*}}
\item{Each $\varphi_i$ is the blow-up of a point contained in $\Sing(\Supp D_{i})$.}
\item{The composite arrow $\psi \colon V \xrightarrow{\varphi} W \xrightarrow{\pi} Z$ factors through $Y$. }
\end{itemize}
Note that each $D_i$ is a reduced simple normal crossing divisor. 
Therefore, by Lemma~\ref{l_blow_up_lift}, $(V, \psi_*^{-1}(C_Z+B_Z) \cup \Exc(\psi))$ 
lifts to characteristic zero over a smooth base. 
Thus, (2) of the Theorem holds. 
\end{proof}

\section{Kawamata--Viehweg vanishing for log del Pezzo surfaces}\label{s_kv}

The goal of this section is to prove Theorem \ref{t_kv}. 
We begin with the following:

\begin{lemma}\label{l_W2_vanishing}
Assume that $\mathrm{char}\,k>2$. 
Let $(X, \Delta)$ be a two dimensional projective klt pair. 
Suppose that there exists a log resolution $\mu\colon V \to X$ 
of $(X, \Delta)$ such that 
\[
(V, \Exc(\mu) \cup \mu_*^{-1}(\Supp \Delta))
\]
lifts to $W_2(k)$. 
Let $D$ be a 
$\Z$-divisor on $X$ such that $D-(K_X+\Delta)$ is ample.

Then,
\[
H^i(X, \mathcal O_X(D))=0 \qquad \text{for any }i>0.
\]
\end{lemma}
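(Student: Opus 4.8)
The plan is to pull the problem back to the lifting resolution $\mu\colon V\to X$ and invoke the logarithmic Deligne--Illusie vanishing \cite[Corollary 3.8]{hara98a}, which applies here precisely because $\dim V=2<p=\Char k$ and $(V,E)$ lifts to $W_2(k)$.

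First I would fix the log resolution $\mu\colon V\to X$ provided by the hypothesis, set $E:=\Exc(\mu)\cup\mu_*^{-1}(\Supp\Delta)$ (a reduced simple normal crossing divisor with $(V,E)$ lifting to $W_2(k)$), and write $K_V+\Delta_V=\mu^*(K_X+\Delta)$. Since $(X,\Delta)$ is klt, every coefficient of $\Delta_V$ is $<1$ and $\Supp\Delta_V\subseteq E$. Setting $A:=D-(K_X+\Delta)$, an ample $\Q$-divisor, I would consider the $\Z$-divisor
\[
L:=\bigl\lceil \mu^*D-\Delta_V\bigr\rceil=K_V+\bigl\lceil \mu^*A\bigr\rceil ,
\]
the second equality because $\mu^*A=\mu^*D-(K_V+\Delta_V)$. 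Two facts about $\mu^*A$ need checking: it is nef and big, being the pull-back of an ample divisor under a birational morphism of surfaces; and its fractional part is supported on $E$, since the only prime divisors along which $\mu^*D$ or $\Delta_V$ can carry a non-integral coefficient are the $\mu$-exceptional ones together with the strict transforms of the components of $\Delta$, all of which lie in $E$. Hence \cite[Corollary 3.8]{hara98a} applies and gives
\[
H^i\bigl(V,\mathcal O_V(L)\bigr)=H^i\bigl(V,\mathcal O_V(K_V+\lceil\mu^*A\rceil)\bigr)=0\qquad\text{for all }i>0.
\]

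Next I would descend this to $X$. A standard discrepancy computation — the step that genuinely uses that $(X,\Delta)$ is klt, so that $\lceil-\Delta_V\rceil$ is effective and $\mu$-exceptional (cf.\ \cite{KM98}) — shows $\mu_*\mathcal O_V(L)=\mathcal O_X(D)$. Feeding this into the five-term exact sequence of the Leray spectral sequence for $\mu$ produces an injection $H^1(X,\mathcal O_X(D))\hookrightarrow H^1(V,\mathcal O_V(L))=0$, so $H^1(X,\mathcal O_X(D))=0$. For $i=2$ I would argue directly: $X$ is a normal projective surface, hence Cohen--Macaulay, so Serre duality gives $H^2(X,\mathcal O_X(D))^{\vee}\cong H^0(X,\mathcal O_X(K_X-D))$; but $D-K_X=\Delta+A$ is big (ample plus effective), so no nonzero rational function $\phi$ can satisfy $\ddiv\phi\ge \Delta+A$ (intersecting with an ample divisor, $\ddiv\phi$ has degree $0$ while $\Delta+A$ has positive degree), whence $H^0(X,\mathcal O_X(K_X-D))=0$. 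Finally $H^i(X,\mathcal O_X(D))=0$ for $i\ge 3$ because $\dim X=2$.

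The only point requiring real care is the bookkeeping at the resolution: the integral divisor $L$ on $V$ must be chosen so that simultaneously (i) $L-K_V$ is nef and big with simple normal crossing fractional part supported in the liftable divisor $E$, so that the characteristic-$p$ logarithmic vanishing applies verbatim, and (ii) $\mu_*\mathcal O_V(L)$ is exactly $\mathcal O_X(D)$; property (ii) is precisely what fails for merely log canonical $X$ and holds here by klt-ness. Everything else — nefness and bigness of $\mu^*A$, the Leray five-term sequence, and the degree-two Serre duality computation — is routine, and no assumption on $p$ beyond $p>2=\dim X$ is needed, since that is all Deligne--Illusie requires together with the given $W_2(k)$-lifting.
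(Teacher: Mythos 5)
Your argument follows the same route as the paper, but there is one genuine gap. You apply the logarithmic Deligne--Illusie vanishing \cite[Corollary 3.8]{hara98a} to the $\Q$-divisor $\mu^*A$, which is only nef and big, not ample (since $\mu$ is a nontrivial birational contraction). Hara's result, however, requires the $\Q$-divisor whose round-up is added to $K_V$ to be ample. This is why the paper does not feed $\mu^*A$ straight into the vanishing theorem: it first produces an effective $\mu$-exceptional $\Q$-divisor $E\ge 0$ so small that $\lceil \mu^*A - E\rceil = \lceil \mu^*A\rceil$ while $\mu^*A-E$ is ample, and then applies Hara's vanishing to $\mu^*A-E$. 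The perturbation is available because the intersection form on the $\mu$-exceptional curves is negative definite, so $-\sum a_iE_i$ is $\mu$-ample for suitable $a_i>0$, and the support of $\{\mu^*A-E\}$ is still contained in $\Exc(\mu)\cup\mu_*^{-1}(\Supp\Delta)$, the divisor that lifts. Without this step, the invocation of \cite[Corollary 3.8]{hara98a} is unjustified.

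The rest of your proof is correct and the bookkeeping you flag as requiring care is handled appropriately: the computation $\mu_*\mathcal O_V(L)=\mathcal O_X(D)$ indeed uses klt-ness exactly as you say, and the fractional part is supported in the liftable SNC divisor. Your handling of the higher cohomology differs slightly from the paper's: you split off $i=1$ via the Leray five-term sequence and settle $i=2$ by Serre duality together with bigness of $D-K_X$, whereas the paper kills $R^j\mu_*\mathcal O_V(M)$ for $j>0$ by relative Kawamata--Viehweg vanishing so that the Leray spectral sequence degenerates and gives $H^i(X,\mathcal O_X(D))\simeq H^i(V,\mathcal O_V(M))$ for all $i$ at once. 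Both are fine; the paper's version is uniform in $i$ and avoids a separate duality computation, while yours avoids citing the relative vanishing statement.
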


%

\begin{proof}
Let 
\[
M=K_{V}+\ulcorner\mu^*(D-(K_X+\Delta)) \urcorner.
\]
We may find a $\mu$-exceptional $\Q$-divisor $E\ge 0$ such that 
\begin{itemize}
\item{$M=K_{V}+\ulcorner \mu^*(D-(K_X+\Delta))-E \urcorner,$ and}
\item{$\mu^*(D-(K_X+\Delta))-E$ is ample.}
\end{itemize}
Note that $\{\mu^*(D-(K_X+\Delta))-E\}$ is simple normal crossing and lifts to $W_2(k)$. 
Since $\mathrm{char}\,k>2$, \cite[Corollary 3.8]{hara98a} implies that 
\[
H^i(V, \mathcal O_V(M))=0
\]
for every $i>0$. 
Consider the  Leray spectral sequence: 
$$E^{i,j}_2:=H^i(X, R^j\mu_*\mathcal O_{V}(M))
\Rightarrow H^{i+j}(V, \mathcal O_{V}(M))=:E^{i+j}.$$
We have
\begin{eqnarray*}
&&\mu_*\mathcal O_{V}(M)\\
&=&\mu_*\mathcal O_{V}(\ulcorner \mu^*(D)+K_V-\mu^*(K_X+\Delta) \urcorner)\\
&=&\mu_*\mathcal O_{V}(\llcorner\mu^*(D)\lrcorner+
(\mathrm{effective\,\,exceptional\,\,\mathbb Z\text{--}divisor}))\\
&\simeq&\mathcal O_{X}(D),
\end{eqnarray*}
where the  second equality holds 
because $(X, \Delta)$ is klt and $D$ is a $\Z$-divisor. 
Since $E_2^{i, j}=0$ for $j>0$ by the relative Kawamata--Viehweg vanishing theorem  for a proper birational morphism between surfaces, we obtain 
$$H^i(X, \mathcal O_X(D))=E_2^{i, 0} \simeq E^i =H^i(V, \mathcal O_V(M))=0$$
for $i>0$. Thus, the claim follows. 
\end{proof}

\begin{proof}[Proof of Theorem \ref{t_kv}]
After possibly perturbing $B$ and $\Delta$, we may assume that $-(K_X+B)$ and $D-(K_X+\Delta)$ are ample. 
Given $I:=\emptyset$, let $p_0:=p(I)$ be the positive integer whose existence is guaranteed by Theorem~\ref{thm-main}. We divide the proof into five steps:

\medskip

\setcounter{step}{0}
\begin{step} We first prove the Theorem assuming that  $\Delta=B=0$.
\end{step}

Theorem \ref{thm-main} implies that $X$ is globally $F$-regular or $X$ admits a 
 log resolution $\mu\colon V \to X$ such that $(V, \Exc(f))$ lifts to $W_2(k)$. 
In the first case, by Serre vanishing, if $e$ is a sufficiently large positive integer and $i>0$, we have 
\[
\begin{aligned}
H^i(X, \mathcal O_X(D)) \hookrightarrow &~ H^i(X, F_*^e\mathcal O_X(p^eD-(p^e-1)K_X)) \\
\simeq &~ H^i(X, \mathcal O_X(D+(p^e-1)(D-K_X))) =0.
\end{aligned}
\]
On the other hand, if $X$ admits a log resolution $\mu \colon V \to X$ such that $(V, \Exc(f))$ lifts to $W_2(k)$, 
then  Lemma \ref{l_W2_vanishing} implies the claim. 

\medskip

\begin{step}
We now prove  the Theorem under the assumption that $D$ is nef. 
\end{step}

Let $f\colon X\to Y$ be a proper birational contraction obtained by running a 
$(-K_X)$-MMP. Since $(X,B)$ is log del Pezzo, it follows that $Y$ is klt and  $-K_Y$ is  big and semi-ample. 
Moreover, there exists an $f$-exceptional $\Q$-divisor $\Delta_1\ge 0$ such that 
\[
K_X+\Delta_1 =f^*K_Y.
\]
Let $g\colon Y \to Z$ be the morphism induced by $-K_Y$ and let 
\[
h\colon X \xrightarrow{f} Y \xrightarrow{g} Z
\]
be the composite morphism. 
Then $-K_Z$ is ample and 
\[
K_X+\Delta_1=h^*K_Z.
\]
By Theorem \ref{thm-main},  $Z$ is globally $F$-regular or it admits a log resolution 
$\mu\colon V \to Z$  such that 
$(V, \Exc(\mu))$ lifts to characteristic zero over a smooth base. 

If $Z$ is globally $F$-regular, then  \cite[Proposition 2.11]{hx13} implies that  $(X, \Delta_1)$ is also globally $F$-regular. 
Thus, we may find a $\Q$-divisor $\Delta_2 \geq \Delta_1$ such that 
$(X, \Delta_2)$ is globally $F$-regular, $-(K_X+\Delta_2)$ is ample, and 
$(p^{e_1}-1)(K_X+\Delta_2)$ is Cartier for some $e_1 \in \mathbb Z_{>0}$. 
Since $D$ is nef, by Fujita vanishing,  if $e$ is a sufficiently divisible positive integer, for any $i>0$ we have 
\[
H^i(X, \mathcal O_X(D)) 
\hookrightarrow H^i(X, F_*^e\mathcal O_X(p^eD-(p^e-1)(K_X+\Delta_2)))=0,
\]
and the Theorem follows. 

Thus, we may assume the existence of the  log resolution $\mu$, as above. 
Since $\Delta_1 \geq 0$, the birational morphism $h\colon X \to Z$ only extracts prime divisors $E$ 
with $a(E, Z, 0) \in (0, 1]$. 
By Remark~\ref{r_terminalisation}, we may assume that $\mu\colon V \to Z$ factors through $X$, 
after possibly replacing $V$ by some blow-up of $V$: 
\[
\mu\colon V \xrightarrow{\mu_X} X \xrightarrow{h} Z.
\]
There exists an $h$-exceptional $\mathbb Q$-divisor $\Delta_2 \geq \Delta_1$ such that 
$(X, \Delta_2)$ is klt and  $-(K_X+\Delta_2)$ is ample.
Since $\mu_X^{-1}(\Delta_2) \subseteq \Exc(\mu)$, 
 Lemma~\ref{l_W2_vanishing} implies the Theorem.

\medskip

\begin{step}
We now show that we may assume that there exists a $D$-negative Mori fibre space  $g\colon X \to Z$ 
onto a smooth projective curve $Z$. 
\end{step}

Let $f\colon X \to Y$ be the birational contraction of a curve $E$ such that $D \cdot E \leq 0$. 
We want to show 
\[
H^i(X, \mathcal O_X(D))=H^i(Y, \mathcal O_Y(f_*D)).
\]
Let $D_Y=f_*D$. Then $D=f^*D_Y+cE$ for some $c \in \mathbb Q_{\geq 0}$, 
which implies 
\[
f_*\mathcal O_X(D) =\mathcal O_Y(D_Y).
\]
Since $D-(K_X+\Delta)$ is ample,  the relative Kawamata--Viehweg vanishing in the birational case implies 
\[
H^i(X, \mathcal O_X(D)) \simeq H^i(Y, f_*\mathcal O_X(D)) =H^i(Y, \mathcal O_Y(D_Y)).
\]

Since $(X,B)$ is log del Pezzo, we may run a $D$-MMP and may assume that one of the following  holds: 
\begin{itemize}
\item{$D$ is nef,}
\item{$\rho(X)=2$, and there exists a $D$-negative Mori fibre space  $X \to Z$ 
onto a smooth projective curve $Z$, or}
\item{$\rho(X)=1$, and $-D$ is ample.}
\end{itemize}
If $D$ is nef, then we may apply Step 2. 
In the last case, we may assume that $\Delta=B=0$, and we may apply Step 1. 
Thus, we may assume that there exists a $D$-negative Mori fibre space  $g\colon X \to Z$ onto a smooth projective curve $Z$. 

\medskip

\begin{step}
We now show that we may assume that the following  hold: 
\begin{enumerate}
\item[(a)]{There exists a curve $E$ on $X$ such that $E^2<0$,}
\item[(b)]{$B=bE$ for some $b\in [0,1)$, and }
\item[(c)]{$\Delta=0$.}
\end{enumerate}
\end{step}


If every curve $C$ on $X$ is nef, 
then we may assume that $\Delta=B=0$, and apply Step 1. 
Thus, we may assume that there exists a curve $E$ on $X$ such that $E^2<0$ and (a) holds. 

By Step 3, we may assume that there exists a $D$-negative Mori fibre space $g\colon X\to Z$ onto a smooth projective curve $Z$.  In particular,  $\rho(X)=2$,  and  any  curve $C$ on $X$, which is different from $E$, is nef. Thus, we may assume that (b) holds.
Similarly, we may assume that $\Delta=\delta E$ for some $\delta\in [0,1)$. 
Let $F$ be a general fibre of $g$. Then $D \cdot F<0$. 
Consider the short exact sequence
\[
0 \to \mathcal O_X(D) \to \mathcal O_X(D+F) \to \mathcal O_F(D|_F) \to 0.
\]
Since $H^0(F, \mathcal O_F(D|_F))=H^1(F, \mathcal O_F(D|_F))=0$, 
it is enough to show that $H^i(X, \mathcal O_X(D+F))=0$ for any $i>0$. 
Repeating the same method finitely many times, it is enough to show that  $H^i(X,\mathcal O_X(D+nF))=0$ for some positive integer $n$ and for any $i>0$. 
Since $D-(K_X+\delta E)$ is ample, 
the divisor 
\[
D+nF-K_X=(D-(K_X+\delta E))+(nF+\delta E)
\]
is also ample for $n \gg 0$. 
Therefore, after possibly replacing $D$ by $D+nF$, we may assume that $\Delta=0$ and (c) holds.  

\medskip

\begin{step}
We now prove the Theorem in the general case. \end{step}

We may assume that (a), (b) and (c) of Step 4 hold. If $K_X \cdot E<0$, then $-K_X$ is ample by Kleiman's criterion and we may assume that $B=0$. Thus, the Theorem follows by Step 1. 

Therefore,  we may assume that $K_X \cdot E \geq 0$. 
Let $f\colon X \to Y$ be the birational morphism which contracts $E$. 
We may write 
\[
K_X+b' E=f^*K_Y
\]
for some $b' \in [0,b)$. 
Thus, by Theorem \ref{thm-main} and Remark~\ref{r_terminalisation}, 
$Y$ is globally $F$-regular or it admits a log resolution $V \to Y$ which factors through $X$ 
such that $(V, \Exc(\mu))$ lifts to $W_2(k)$: 
\[
\mu\colon V \xrightarrow{\mu_X} X \to Y.
\]
In the latter case, since $\Exc(\mu_X) \subseteq \Exc(\mu)$, we may apply Lemma \ref{l_W2_vanishing}. 
Thus, we may assume that $Y$ is globally $F$-regular.   
By \cite[Prop 2.11]{hx13}, 
$(X, b' E)$ is globally $F$-regular, and so is $X$. 
Step 4(c) implies that  $D-K_X$ is ample, 
Therefore, 
by Serre vanishing, if $e$ is  a sufficiently large positive integer, we have
\[
H^i(X, D) \hookrightarrow H^i(X, p^eD-(p^e-1)K_X)
=H^i(X, K_X+p^e(D-K_X))=0.
\]
Thus, the Theorem follows. 
\end{proof}

\section{Example in characteristic two}\label{s_char2_example}

The goal of this section is to show Theorem~\ref{t_char2_example}. We begin with the following two preliminary results: 

\begin{lemma}\label{l_weil_conj}
Let $T \to \Spec\, \mathbb Z$ be a smooth morphism from an integral scheme $T$
and let $\mathcal W \to T$ be a flat projective morphism. 
Fix a morphism $\alpha\colon \Spec\,\overline{\mathbb F}_{p} \to T$ and an embedding $K(T) \hookrightarrow \mathbb C$. 
Let $W_{\overline{\mathbb F}_p}:=\mathcal W \times_{T} \overline{\mathbb F}_{p}$, and 
$W_{\mathbb C}:=\mathcal W \times_{T} \mathbb C$. 
If $W_{\overline{\mathbb F}_p}$ is a smooth projective rational surface, 
then the following  hold: 
\begin{enumerate}
\item{$W_{\mathbb C}$ is a smooth projective rational surface, and}
\item{$\rho(W_{\mathbb C})=\rho(W_{\overline{\mathbb F}_p})$.}
\end{enumerate}
\end{lemma}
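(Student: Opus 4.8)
The plan is to spread the family out until $\mathcal W\to T$ is smooth and projective near the relevant points, transfer the vanishings $q=0$ and $P_2=0$ from $W_{\overline{\mathbb F}_p}$ to $W_{\mathbb C}$ using semicontinuity of coherent cohomology, conclude rationality of $W_{\mathbb C}$ via Castelnuovo's criterion, and finally compare Picard numbers through the identity $\rho=b_2$ for rational surfaces together with smooth proper base change in $\ell$-adic cohomology. First I would note that the smooth locus of $\mathcal W\to T$ is open and, since $\mathcal W\to T$ is proper, the set $U$ of $t\in T$ with smooth fibre is open. Smoothness of $W_{\overline{\mathbb F}_p}$ puts the image $t_0\in T$ of $\alpha$ in $U$, and since $T$ is integral the nonempty open $U$ contains the generic point $\eta$, through which the fixed embedding $K(T)\hookrightarrow\mathbb C$ factors; replacing $T$ by $U$ (which changes neither $W_{\overline{\mathbb F}_p}$ nor $W_{\mathbb C}$) I may assume $\mathcal W\to T$ is smooth and projective. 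Then $W_{\mathbb C}=\mathcal W_\eta\times_{K(T)}\mathbb C$ is smooth and projective over $\mathbb C$; it is connected because $t\mapsto h^0(\mathcal W_t,\mathcal O)$ is upper semicontinuous and equals $1$ on $W_{\overline{\mathbb F}_p}$, and two-dimensional because the Hilbert polynomials of the fibres of the flat projective morphism $\mathcal W\to T$ have constant degree; hence $W_{\mathbb C}$ is a smooth projective surface.

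For part (1), I would apply the semicontinuity theorem to $\mathcal O_{\mathcal W}$ and to $\omega_{\mathcal W/T}^{\otimes 2}$ (a line bundle since $\mathcal W\to T$ is smooth, restricting to $\mathcal O_{W_t}(2K_{W_t})$ on each fibre). As $W_{\overline{\mathbb F}_p}$ is rational, $q=h^1(\mathcal O)=0$ and $P_2=h^0(2K)=0$ on it; upper semicontinuity forces the same vanishings on $W_\eta$, and flat base change from $K(T)$ to $\mathbb C$ gives $q(W_{\mathbb C})=P_2(W_{\mathbb C})=0$. Castelnuovo's rationality criterion over $\mathbb C$ then shows that $W_{\mathbb C}$ is rational, which is (1).

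For part (2), I would use that for a smooth projective rational surface $S$ over an algebraically closed field one has $\rho(S)=b_2(S):=\dim H^2_{\mathrm{et}}(S,\mathbb Q_\ell)$ for any prime $\ell\neq\Char k$: contracting $(-1)$-curves reduces $S$ to $\mathbb P^2$ or a Hirzebruch surface, each blow-up raises both $\rho$ and $b_2$ by exactly $1$, and the two numbers agree on $\mathbb P^2$ and on $\mathbb F_n$. Applying this to $W_{\overline{\mathbb F}_p}$ and to $W_{\mathbb C}$ (rational by (1)), it is enough to prove $b_2(W_{\overline{\mathbb F}_p})=b_2(W_{\mathbb C})$. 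Choosing $\ell\neq p$ and replacing $T$ by $T\times_{\Spec\mathbb Z}\Spec\mathbb Z[1/\ell]$, which still contains both $t_0$ (residue characteristic $p$) and $\eta$ (characteristic $0$), the morphism $f\colon\mathcal W\to T$ remains smooth and proper, so $R^2f_*\mathbb Q_\ell$ is lisse and $\dim H^2_{\mathrm{et}}(W_{\bar t},\mathbb Q_\ell)$ is constant over geometric points $\bar t$; together with invariance of $\ell$-adic cohomology under extension of the algebraically closed coefficient field (passing from $W_{\overline{K(T)}}$ to $W_{\mathbb C}$) and Artin comparison over $\mathbb C$, this yields $b_2(W_{\overline{\mathbb F}_p})=b_2(W_{\mathbb C})$, hence $\rho(W_{\overline{\mathbb F}_p})=\rho(W_{\mathbb C})$.

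The main obstacle I expect is the $\ell$-adic comparison across mixed characteristic in part (2): one has to check that inverting $\ell$ keeps the points $t_0$ and $\eta$ in the base, and then correctly combine smooth proper base change over $T[1/\ell]$, the invariance of étale cohomology under enlarging the algebraically closed coefficient field, and the Artin comparison isomorphism. By contrast, the spreading-out, the coherent-cohomology semicontinuity, and the elementary identity $\rho=b_2$ for rational surfaces should be routine.
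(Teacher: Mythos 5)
Your proof is correct. Part (1) is essentially identical to the paper's argument: the paper checks $H^i(W_{\mathbb C},\mathcal O_{W_{\mathbb C}})=0$ for $i>0$ and $H^2(W_{\mathbb C},\mathcal O_{W_{\mathbb C}}(-K_{W_{\mathbb C}}))=0$ by semicontinuity and then applies Castelnuovo, which by Serre duality is precisely your vanishing of $q$ and $P_2$.

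For part (2) your route is genuinely different from the paper's, though the two are closely related. The paper factors $\alpha$ through $\Spec\,\mathbb F_{p^e}$ for $e\gg 0$ so that $W_{\mathbb F_{p^e}}$ is a sequence of blow-ups of $\mathbb P^2$ or a Hirzebruch surface at $\mathbb F_{p^e}$-rational centres, computes the zeta function by counting rational points to get
\[
Z_{W_{\mathbb F_{p^e}}}(t)=\frac{1}{(1-t)(1-p^et)^{\rho(W_{\overline{\mathbb F}_{p}})}(1-p^{2e}t)},
\]
and then reads off $\rho(W_{\overline{\mathbb F}_p})=\deg P_2=b_2(W_{\mathbb C})=\rho(W_{\mathbb C})$ by the Weil conjectures. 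You instead observe $\rho=b_2$ directly on both fibres via the same blow-down structure, and compare $b_2$ across characteristics by invoking smooth proper base change for $R^2f_*\mathbb Q_\ell$ over $T[1/\ell]$ together with Artin comparison. Both arguments ultimately encode the same comparison between $\ell$-adic and singular cohomology; the paper packages it through an explicit point count and a citation to the Weil conjecture machinery, whereas you use the lisse-sheaf formulation directly. Your version avoids the step of descending the model to a finite field and counting points, at the cost of invoking the base change theorem explicitly; the paper's version is more hands-on but slightly longer. Either is acceptable, and you have correctly flagged the one delicate point — that inverting $\ell\neq p$ keeps both $t_0$ and $\eta$ in the base.
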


\begin{proof}
Since being smooth and geometrically integral are open properties, it follows that 
$W_{\mathbb C}$ is a smooth projective surface.

By  upper semi-continuity, it follows that 
$H^i(W_{\mathbb C}, \mathcal O_{W_{\mathbb C}})=0$ for any $i>0$ and 
$H^2(W_{\mathbb C}, \mathcal O_{W_{\mathbb C}}(-K_{W_{\mathbb C}}))=0$. 
Thus,  by the rationality criterion, 
it follows that $W_{\mathbb C}$ is a rational surface and (1) holds. 

Note  that, for any sufficiently large positive integer $e$, the morphism $\alpha$ factors as follows 
\[
\alpha\colon \Spec\,\overline{\mathbb F}_{p} \to \Spec\,\mathbb F_{p^e} \to T.
\]
Let $W_{\mathbb F_{p^e}}:=\mathcal W \times_T \mathbb F_{p^e}$. 
After possibly replacing $e$ by a larger number, we may assume that 
$W_{\mathbb F_{p^e}}$ is 
obtained as a sequence of blow-ups of 
$\mathbb P^2_{\mathbb F_{p^e}}$ or a $\mathbb P^1$-bundle over $\mathbb P^1_{\mathbb F_{p^e}}$,  
whose  centres are $\mathbb F_{p^e}$-rational points. 
By counting the rational points, 
it is easy to check that the zeta function of $W_{\mathbb F_{p^e}}$ can be written as
\[
Z_{W_{\mathbb F_{p^e}}}(t)=\frac{1}{(1-t)(1-p^et)^{\rho(W_{\overline{\mathbb F}_{p}})}(1-p^{2e}t)}.
\]
On the other hand, we obtain 
\[
\rho(W_{\overline{\mathbb F}_{p^e}})=\deg (1-p^et)^{\rho(W_{\overline{\mathbb F}_{p}})}=\dim_{\mathbb C} H^2(W_{\mathbb C}, \mathbb C)=\rho(W_{\mathbb C}),
\]
where 
the second equality follows from 
a consequence of 
the Weil conjecture  \cite[Chapter IV, Remark (b) after Theorem 1.2]{FK88}. Thus, (2) holds.  
\end{proof}

\begin{lemma}\label{l_non_gfs_const}
Let $k$ be an algebraically closed field of characteristic two. 
Then there exist  six distinct points $q_0, \dots, q_5 \in \mathbb P_k^2$ 
which satisfy the following properties: 
\begin{enumerate}
\item{if $g\colon Z \to \mathbb P_k^2$ is the blow-up at $q_0,\dots,q_5$,  
then $Z$ is a smooth del Pezzo surface,}
\item{$Z$ is not globally $F$-split, and }
\item{if $C$ is the smooth conic passing through $q_1, \ldots, q_5$,  
then, for every point $q\in C$, the line $L$ passing through $q_0$ and $q$ is tangent to $C$.}
\end{enumerate}
\end{lemma}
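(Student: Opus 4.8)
The plan is to take the six points to be the nucleus of a smooth conic together with five points of that conic, the whole construction being driven by the fact that in characteristic two a smooth conic is a strange curve. Concretely, fix a smooth conic $C\subseteq\mathbb{P}^2_k$, say $C=\{xz=y^2\}$. In characteristic two every tangent line of $C$ passes through a single point $q_0:=[0:1:0]$, the \emph{nucleus} of $C$, and $q_0\notin C$; equivalently, linear projection away from $q_0$ restricts on $C$ to a purely inseparable degree-two morphism onto $\mathbb{P}^1_k$. I would then let $q_1,\dots,q_5$ be five distinct points of $C$ and $g\colon Z\to\mathbb{P}^2_k$ the blow-up of $q_0,\dots,q_5$. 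Property (3) is then just the definition of the nucleus: for $q\in C$ one has $q\neq q_0$, and the line through $q_0$ and $q$ meets $C$ in a length-two scheme supported at one point, hence equals $T_qC$. For (1): no three of $q_1,\dots,q_5$ are collinear, and a secant $\overline{q_iq_j}$ meets $C$ transversally in two points, so it is not tangent to $C$ and does not contain $q_0$; hence no three of $q_0,\dots,q_5$ are collinear, and since the only conic through $q_1,\dots,q_5$ is $C\not\ni q_0$, no conic passes through all six points. By the usual criterion (no three of the points collinear, no conic through six of them) the blow-up $Z$ is a smooth del Pezzo surface of degree three, and the same analysis shows $Z$ has no $(-2)$-curve.

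The heart of the matter is (2). Since $-K_Z$ is very ample, $Z$ embeds in $\mathbb{P}^3_k$ as a cubic surface $V(f)$ whose homogeneous coordinate ring is the anticanonical section ring $\bigoplus_{n\ge0}H^0(Z,-nK_Z)\cong k[x_0,\dots,x_3]/(f)$; because $Z$ is globally $F$-split exactly when this ring is $F$-split, Fedder's criterion reduces (2) to showing
\[
f\in(x_0^2,x_1^2,x_2^2,x_3^2).
\]
Equivalently, one must show that every member of $|-K_Z|$ — every plane cubic through $q_0,\dots,q_5$ — is either non-log-canonical or a supersingular elliptic curve, and this is where (3) is used. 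Choosing coordinates on $\mathbb{P}^3_k$ so that the line $\widetilde C\subseteq Z$ (the strict transform of $C$) is $\{x_2=x_3=0\}$, one gets $f=x_2q_2+x_3q_3$ for quadratics $q_2,q_3$, and the members of the pencil of hyperplane sections of $Z$ through $\widetilde C$ are $\widetilde C+(\text{residual conic})$. Property (3) forces every residual conic to be tangent to $\widetilde C$; since in characteristic two the relevant discriminant is a perfect square, this already kills the $x_0x_1$-terms of $q_2$ and $q_3$, and a further computation exploiting the positions of $q_1,\dots,q_5$ on $C$ disposes of the remaining mixed monomials, yielding the membership above. Indeed, the underlying geometric reason is that a cubic $\sum h_ix_i^2$ has the property that its gradient consists of squares, so its plane sections are never nodal — they are smooth (and then supersingular, by Fedder applied on the hyperplane) or have a single cusp-type singularity (and then not log canonical).

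I expect the main obstacle to be precisely this characteristic-two computation establishing $f\in(x_0^2,x_1^2,x_2^2,x_3^2)$ — equivalently the singularity classification for the plane cubics through the six points. The route I would take is to fix affine coordinates adapted to the strange conic $C$ so that the inseparability of the projection from $q_0$ is manifest, write $f$ out explicitly, and verify the membership by inspection; an alternative, more intrinsic approach is to analyse the degree-two cover $E\to\mathbb{P}^1_k$ induced on a smooth anticanonical curve $E\in|-K_Z|$ by projection from $q_0$, and to show that this cover has a single wild branch point, which forces $E$ to be supersingular. Parts (1) and (3) are essentially formal once the nucleus has been identified, so all the real work is concentrated here.
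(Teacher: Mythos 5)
Your approach is genuinely different from the paper's, and the gap is concentrated exactly where you predict it.

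The paper proceeds in the opposite direction: it takes $Z$ to be the Fermat cubic surface, which is already known not to be globally $F$-split by \cite[Example 5.5]{hara98a}, writes $Z$ as a blow-up of $\mathbb P^2_k$ at six points $q_0,\dots,q_5$, and then \emph{derives} property (3) by contradiction. Namely, if some line $L$ through $q_0$ met $C$ transversally, then $K_Z+C_Z+L_Z+E_Z\sim 0$ would be a log canonical but non-plt anticanonical boundary, and Lemma~\ref{l_pic1-non-plt} would force $Z$ to be globally $F$-split, contradicting (2). Thus the only input about (2) is the citation, and (3) falls out via an easy application of Lemma~\ref{l_pic1-non-plt}. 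You reverse the roles: you take $q_0$ to be the nucleus of a strange conic and $q_1,\dots,q_5$ points on it, so (3) holds by definition, and you try to prove (2) directly. Your verifications of (1) and (3) are fine.

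The problem is that (2) is the hard part of the lemma, and your argument there is not a proof. You reduce it, via Fedder's criterion, to showing that the anticanonical cubic form satisfies $f\in(x_0^2,x_1^2,x_2^2,x_3^2)$, but you then explicitly say that the characteristic-two computation establishing this is the main obstacle and that you ``would'' carry it out, without actually doing so; the intermediate claims (``this already kills the $x_0x_1$-terms,'' ``a further computation\dots disposes of the remaining mixed monomials,'' ``its plane sections are never nodal'') are asserted, not checked. There is also a stronger implicit claim that needs justification: you take $q_1,\dots,q_5$ to be \emph{arbitrary} distinct points of $C$, so your proposal amounts to asserting that the implication $(1)\wedge(3)\Rightarrow(2)$ holds for every such configuration. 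That may well be true (the paper's own argument shows $(2)\Rightarrow(3)$ once (1) holds, so a converse is at least plausible), but it is a nontrivial statement that your sketch does not establish, and the paper avoids ever needing it by anchoring the construction to a specific surface whose non-$F$-splitness is a matter of record. If you want to salvage your route, either carry out the Fedder computation in full, or at least identify a specific choice of $q_1,\dots,q_5$ for which the blow-up is recognisably the Fermat cubic and then quote \cite[Example 5.5]{hara98a} as the paper does.
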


\begin{proof}
The smooth cubic surface $Z$ of Fermat type is not globally $F$-split 
 \cite[Example 5.5]{hara98a}. 
We can find six points $q_0, \dots, q_5 \in \mathbb P_k^2$ such that 
the blow-up along these points is isomorphic to $Z$. Let $g\colon Z\to \mathbb P_k^2$ be the induced morphism. 
Then (1) and (2) hold. 

By contradiction, we assume that 
there exists a line $L$ on $\mathbb P^2_k$, passing through $q_0$, such that $C+L$ is simple normal crossing. 
Let $C_Z$ and $L_Z$ be  the proper transforms on $Z$ of $C$ and $L$, respectively. 
By (1), there exists a point $q \in C \cap L$ with $q \neq q_i$ for any $i=1,\dots,5$, 
and, in particular,  $C_Z \cap L_Z \neq \emptyset$. 
There exists a reduced $g$-exceptional divisor $E_Z$ on $Z$ such that 
\[
K_Z+C_Z+L_Z+E_Z=g^*(K_{\mathbb P^2_k}+C+L) \sim 0,
\]
and  $(Z, C_Z+L_Z+E_Z)$ is log canonical but not plt. 
By Lemma~\ref{l_pic1-non-plt}, $(Z, C_Z+L_Z +E_Z)$ is globally $F$-split. Thus,  $Z$ is globally $F$-split, contradicting (2). 
 Thus, (3) holds. 
\end{proof}

We now prove the main result of this section.

\begin{proof}[Proof of Theorem~\ref{t_char2_example}]
Let $q_0, \dots, q_5 \in \mathbb P^2_{\overline{\mathbb F}_2}$ be as in Lemma~\ref{l_non_gfs_const}. 
For any $i=1,\dots,5$, let $L_i$ be the line in $\mathbb P^2_{\overline{\mathbb F}_2}$ 
passing through $q_0$ and $q_i$. 
Note that each $L_i$ is tangent to $C$ by (3) of Lemma~\ref{l_non_gfs_const}. 
Let 
\[
\pi\colon Y \to \mathbb P^2_{\overline{\mathbb F}_2}
\]
be the birational morphism constructed as follows. 
First, we consider the  blow-up $Z \to \mathbb P^2_{\overline{\mathbb F}_2}$ at the points $q_0,\dots,q_5$.  
Then, we consider the blow-up $Y\to Z$ at the points $q'_1,\dots,q'_5$, where $q'_i$ is the intersection point of the proper transforms on $Z$ of $C$ and $L_i$. 
In particular,  $\rho(Y)=12$. 
By the same argument as in \cite[Section 9, An interesting example in non-zero characteristic]{km99}, 
we can find curves $E_2, E_3, \dots, E_{12}$ on $Y$ with 
\[
E_2^2=-6, \qquad E^2_m=-2, \qquad E_i \cdot E_j=0
\]
for any $m=3,\dots,12$ and distinct $i,j= 2,\dots, 12$. 
Let $f\colon Y \to X$ be the birational morphism which contracts all these curves. 
Then $X$ is a projective klt surface with $\rho(X)=1$. 
By [ibid], $-K_X$ is ample. 
Thus, (1) holds. 

By (2) of Lemma~\ref{l_non_gfs_const}, 
$Z$ is not globally $F$-split. By  \cite[Lemma 2.2]{CTW15a}, it follows that $Y$ is not globally $F$-split and by 
 \cite[Proposition 2.11]{hx13}, it follows that  $X$ is not globally $F$-split. 
Thus, (2) holds. 

We now show (3). 
By contradiction, we assume that 
there exists a log resolution $h\colon W \to X$ such that 
$(W, \Exc(h)=\sum_{i=2}^{n} C_i)$ lifts to characteristic zero over a smooth base, i.e.\ 
there exists a smooth morphism $T \to \Spec\,\mathbb Z$, 
a closed immersion of schemes $\mathcal C_i \subseteq \mathcal W$ projective and flat over $T$ for any $i=2,\dots,n$, and 
a morphism $\alpha\colon \Spec\, \overline{\mathbb F}_2 \to T$ 
such that the base changes of $\mathcal C_i \subseteq \mathcal W$ by $\alpha$ is $C_i \subseteq W$ 
and such that $\mathcal W$ and all the strata of $\{\mathcal C_i\}$ are smooth over $T$. 
We may assume that $T$ is an integral scheme. 
Note that $K(T)$ is of characteristic zero. 
Fix an embedding $K(T) \hookrightarrow \mathbb C$, and let 
$W_{\mathbb C}:=\mathcal W \times_T \mathbb C$. 
By Lemma~\ref{l_weil_conj}, we have 
\[
n=\rho(W)=\rho(W_{\mathbb C}).
\]

The birational morphism $f\colon Y \to X$ constructed above is the minimal resolution of $X$. Thus, $h$ factors as
\[
h\colon W \xrightarrow{g} Y \xrightarrow{f} X
\]
and may assume that $g_*C_i=E_i$ for any $i=2,\dots,12$. 
We can find a sequence of blow-ups 
\[
g\colon W=:W_n \xrightarrow{g_n} W_{n-1} \xrightarrow{g_{n-1}} \dots \xrightarrow{g_{13}} W_{12}:=Y
\]
with $\rho(W_r)=r$ for any $r=12,\dots,n$. 
We may assume that the proper transform of $\Exc(g_r)$ on $W$ is $C_r$.

By  invariance of the intersection numbers, 
the two intersection matrices $(C_i \cdot C_j)$ and $(C_{i, \mathbb C} \cdot C_{j, \mathbb C})$ coincide. 
Thus, we can construct the corresponding sequence over $\mathbb C$, i.e.\ 
a sequence of blow-ups: 
\[
g_{\mathbb C}\colon W_{\mathbb C}=:W_{n, \mathbb C} \xrightarrow{g_{n, \mathbb C}} W_{n-1, \mathbb C} \xrightarrow{g_{n-1, \mathbb C}} \dots \xrightarrow{g_{13, \mathbb C}} W_{12, \mathbb C}:=Y_{\mathbb C}
\]
such that $\rho(W_{r, \mathbb C})=r$ for every $r=12,\dots, n$ and such that 
the proper transform of $\Exc(g_{r, \mathbb C})$ on $W_{\mathbb C}$ is $C_{r, \mathbb C}$. 
Indeed, it is easy to check that the push-forward of $C_{i, \mathbb C}$ on $W_{i, \mathbb C}$ is a $(-1)$-curve 
and that the curves $E_{2, \mathbb C}:=(g_{\mathbb C})_*C_{2, \mathbb C}, \dots, E_{12, \mathbb C}:=(g_{\mathbb C})_*C_{12, \mathbb C}$ satisfy  
\[
E_{2, \mathbb C}^2=-6, \,\,\,\,E^2_{m, \mathbb C}=-2, \,\,\,\,E_{i, \mathbb C} \cdot E_{j, \mathbb C}=0
\]
for any $m=3,\dots,12$ and distinct $i,j= 2,\dots, 12$. 
Note also that  $E_{\ell, \mathbb C}$ is isomorphic to $\mathbb P_{\mathbb C}^1$ for any $\ell =2,\dots,12$. 
Let
\[
f_{\mathbb C}\colon Y_{\mathbb C} \to X_{\mathbb C}
\]
be the birational morphism which contracts $E_{2,\mathbb C},\dots,E_{12,\mathbb C}$.
Then $X_{\mathbb C}$ is a projective klt surface with exactly 11 singular points. 
Moreover, since $\rho(W_{\mathbb C})=n$, it follows that 
$\rho(X_{\mathbb C})=1$.

Further, for any invertible sheaf $\mathcal A$ on $\mathcal W$ which is ample over $T$, we have
\[
0>g^*f^*K_X \cdot \mathcal A|_{W}=g_{\mathbb C}^*f_{\mathbb C}^*K_{X_{\mathbb C}} \cdot \mathcal A|_{W_{\mathbb C}}.
\]
Thus, $-K_{X_{\mathbb C}}$ is ample. On the other hand, \cite[Theorem~9.2]{km99}
implies that $X_{\mathbb C}$ admits at most 6 singularities, a contradiction. Thus, (3) holds.  
\end{proof}

\bibliographystyle{amsalpha}
\bibliography{Library}

\newcommand{\etalchar}[1]{$^{#1}$}
\providecommand{\bysame}{\leavevmode\hbox to3em{\hrulefill}\thinspace}
\providecommand{\MR}{\relax\ifhmode\unskip\space\fi MR }
\providecommand{\MRhref}[2]{%
  \href{http://www.ams.org/mathscinet-getitem?mr=#1}{#2}
}
\providecommand{\href}[2]{#2}
\begin{thebibliography}{FGI{\etalchar{+}}05}

\bibitem[Ale93]{alexeev93}
V.~Alexeev, \emph{Two two-dimensional terminations}, Duke Math. J. \textbf{69}
  (1993), no.~3, 527--545.

\bibitem[Ale94]{alexeev94}
\bysame, \emph{Boundedness and {$K^2$} for log surfaces}, International J.
  Math. \textbf{5} (1994), 779--810.

\bibitem[B{\u{a}}d01]{badescu01}
L.~B{\u{a}}descu, \emph{Algebraic surfaces}, Universitext, Springer-Verlag, New
  York, 2001.

\bibitem[Bir16]{birkar13}
C.~Birkar, \emph{Existence of flips and minimal models for $3$-folds in char
  $p$}, Ann. Sci. Ecole Norm. Sup. \textbf{49} (2016), no.~1, 169--212.

\bibitem[BW14]{bw14}
C.~Birkar and J.~Waldron, \emph{Existence of {M}ori fibre spaces for $3$-folds
  in char $p$}, arXiv:1410.4511 (2014).

\bibitem[CGS16]{cgs14}
P.~Cascini, Y.~Gongyo, and K.~Schwede, \emph{Uniform bounds for strongly
  ${F}$-regular surfaces}, Trans. Amer. Math. Soc. \textbf{368} (2016), no.~8,
  5547--5563.

\bibitem[CTW16]{CTW15a}
P.~Cascini, H.~Tanaka, and J.~Witaszek, \emph{Klt del {P}ezzo surfaces which
  are not globally ${F}$-split}, arXiv:1601.03578 (2016).

\bibitem[CTX15]{ctx13}
P.~Cascini, H.~Tanaka, and C.~Xu, \emph{On base point freeness in positive
  characteristic}, Ann. Sci. Ecole Norm. Sup. \textbf{48} (2015), no.~5,
  1239--1272.

\bibitem[Das15]{das15}
O.~Das, \emph{On strongly {$F$}-regular inversion of adjunction}, J. Algebra
  \textbf{434} (2015), 207--226.

\bibitem[EV92]{EV92}
H.~Esnault and E.~Viehweg, \emph{Lectures on vanishing theorems}, DMV Seminar,
  vol.~20, Birkh\"auser Verlag, Basel, 1992.

\bibitem[FGI{\etalchar{+}}05]{fga2005}
B.~Fantechi, L.~G{\"o}ttsche, L.~Illusie, S.~L. Kleiman, N.~Nitsure, and
  A.~Vistoli, \emph{Fundamental algebraic geometry}, American Mathematical
  Society, Providence, RI, 2005.

\bibitem[FK88]{FK88}
E.~Freitag and R.~Kiehl, \emph{\'{E}tale cohomology and the {W}eil conjecture},
  Ergebnisse der Mathematik und ihrer Grenzgebiete (3) [Results in Mathematics
  and Related Areas (3)], vol.~13, Springer-Verlag, Berlin, 1988.

\bibitem[Fu15]{Fu15}
L.~Fu, \emph{Etale cohomology theory}, revised ed., Nankai Tracts in
  Mathematics, vol.~14, World Scientific Publishing Co. Pte. Ltd., Hackensack,
  NJ, 2015.

\bibitem[Fuj12]{fujino12}
O.~Fujino, \emph{Minimal model theory for log surfaces}, Publ. Res. Inst. Math.
  Sci. \textbf{48} (2012), no.~2, 339--371.

\bibitem[Har98]{hara98a}
N.~Hara, \emph{A characterization of rational singularities in terms of
  injectivity of {F}robenius maps}, Amer. J. Math. \textbf{120} (1998), no.~5,
  981--996.

\bibitem[HX15]{hx13}
C.~Hacon and C.~Xu, \emph{On the three dimensional minimal model program in
  positive characteristic}, J. Amer. Math. Soc. \textbf{28} (2015), no.~3,
  711--744.

\bibitem[K{\etalchar{+}}92]{Kollaretal}
J.~Koll{\'a}r et~al., \emph{Flips and abundance for algebraic threefolds},
  Soci{\'e}t{\'e} Math{\'e}matique de France, Paris, 1992.

\bibitem[KM98]{KM98}
J.~Koll{\'a}r and S.~Mori, \emph{Birational {G}eometry of {A}lgebraic
  {V}arieties}, Cambridge {T}racts in {M}athematics, vol. 134, Cambridge
  University Press, 1998.

\bibitem[KM99]{km99}
S.~Keel and J.~M\textsuperscript{c}Kernan, \emph{Rational curves on
  quasi-projective surfaces}, Mem. Amer. Math. Soc. \textbf{140} (1999),
  no.~669, viii+153.

\bibitem[Kol13]{kollar13}
J.~Koll{\'a}r, \emph{Singularities of the minimal model program}, Cambridge
  Tracts in Mathematics, vol. 200, Cambridge University Press, Cambridge, 2013.

\bibitem[Lan16]{langer15}
A.~Langer, \emph{The {B}ogomolov-{M}iyaoka-{Y}au inequality for logarithmic
  surfaces in positive characteristic}, Duke Math. J. \textbf{165} (2016),
  no.~14, 2737--2769.

\bibitem[Liu02]{liu02}
Q.~Liu, \emph{Algebraic geometry and arithmetic curves}, Oxford Graduate Texts
  in Mathematics, vol.~6, Oxford University Press, Oxford, 2002.

\bibitem[MP04]{mp04}
J.~M\textsuperscript{c}Kernan and Y.~Prokhorov, \emph{{T}hreefold
  {T}hresholds}, Manuscripta Math. \textbf{114} (2004), no.~3, 281--304.

\bibitem[Pro01]{prokhorov01}
Y.~Prokhorov, \emph{Lectures on complements on log surfaces}, MSJ Memoirs,
  vol.~10, Mathematical Society of Japan, Tokyo, 2001.

\bibitem[Sch09]{schwede09}
K.~Schwede, \emph{{$F$}-adjunction}, Algebra Number Theory \textbf{3} (2009),
  no.~8, 907--950.

\bibitem[Sch14]{schwede14}
\bysame, \emph{A canonical linear system associated to adjoint divisors in
  characteristic {$p>0$}}, J. Reine Angew. Math. \textbf{696} (2014), 69--87.

\bibitem[SS10]{schwedesmith10}
K.~Schwede and K.~E. Smith, \emph{Globally {$F$}-regular and log {F}ano
  varieties}, Adv. Math. \textbf{224} (2010), no.~3, 863--894.

\bibitem[Tan14]{tanaka12}
H.~Tanaka, \emph{Minimal models and abundance for positive characteristic log
  surfaces}, Nagoya Math. J. \textbf{216} (2014), 1--70.

\bibitem[Tan15]{tanaka12a}
\bysame, \emph{The {$X$}-method for klt surfaces in positive characteristic},
  J. Algebraic Geom. \textbf{24} (2015), no.~4, 605--628.

\bibitem[Wat91]{watanabe91}
K.~Watanabe, \emph{{$F$}-regular and {$F$}-pure normal graded rings}, J. Pure
  Appl. Algebra \textbf{71} (1991), no.~2-3, 341--350.

\bibitem[Wit15]{witaszek15}
J.~Witaszek, \emph{Effective bounds on singular surfaces in positive
  characteristic}, Michigan Math. J. (to appear) (2015).

\end{thebibliography}

\end{document}